\documentclass[11pt, reqno]{amsart}

\usepackage{amsmath,amssymb,amsthm,amsfonts,mathtools,mathrsfs, mathscinet}
\usepackage[usenames,dvipsnames]{color}
\usepackage{url}
\usepackage{hyperref}
\hypersetup{colorlinks=true,citecolor=blue,linkcolor=Red,urlcolor=blue,pdfstartview=FitH}
\usepackage{ amscd, dsfont}
\usepackage[shortlabels]{enumitem}
\usepackage{easybmat,graphics}
\usepackage{etex, tikz, epic}
\usepackage{hyperref}
\hypersetup{colorlinks=true,citecolor=blue,linkcolor=black}
\usepackage{etex}
\usepackage[all]{xy}
\usepackage[mathscr]{euscript}

\newtheorem{theorem}{Theorem}[section]
\newtheorem{lemma}[theorem]{Lemma}
\newtheorem{corollary}[theorem]{Corollary}
\newtheorem{proposition}[theorem]{Proposition}

\newtheorem{remark}[theorem]{Remark}

\theoremstyle{definition}

\newlength{\margins}
\setlength{\margins}{1.2in}
\linespread{1.1}
\usepackage[top=\margins,bottom=\margins,left=\margins,right=\margins]{geometry}

\numberwithin{equation}{section}

% groups, sets, and operators

\def\Ker{\operatorname{Ker}}

\def\Aut{\operatorname{Aut}}
\def\Out{\operatorname{Out}}
\def\Inn{\operatorname{Inn}}

\def\Hom{\operatorname{ Hom }}

\def\Char{\operatorname{char}}

\def\rank{\operatorname{rank}}

\def\Gal{\mathop{\rm Gal}\nolimits}
\def\SU{\mathop{\rm SU}\nolimits}

\def\GL{\mathop{\rm GL}\nolimits}
\def\Sym{\mathop{\rm Sym}\nolimits}
\def\SL{\mathop{\rm SL}\nolimits}
\def \Del{\mathop{\rm Del}\nolimits}
\def \Tr{\mathop{\rm Tr}\nolimits}
\def \Kaz{\mathop{\rm Kaz}\nolimits}

\def \Spec{\mathop{\rm Spec}\nolimits}

\newcommand{\Res}{\text{Res}}
\newcommand{\Fu}{{F_{un}}}

\newcommand{\FFu}{F'_{un}}
\newcommand{\FFsh}{\widehat{F'_{un}}}
\newcommand{\tDelta}{\tilde\Delta}

\newcommand{\Fsh}{{\widehat\Fu}}
% rings and fields

%\newcommand{\fp}{\mathbb{F}_p}

\numberwithin{equation}{section}

\numberwithin{equation}{section}

% arrows and matrices

%mathrm letters

% calligraphic letters
\newcommand{\cA}{{\mathcal{A}}}
\newcommand{\cB}{{\mathcal{B}}}

\newcommand{\cF}{{\mathcal{F}}}

\newcommand{\cH}{{\mathcal{H}}}
\newcommand{\cI}{{\mathcal{I}}}

\newcommand{\cL}{{\mathcal{L}}}
\newcommand{\cM}{{\mathcal{M}}}

\newcommand{\cP}{\mathcal{P}}

\newcommand{\cT}{{\mathcal{T}}}
\newcommand{\cU}{{\mathcal{U}}}

\newcommand{\cX}{{\mathcal{X}}}
\newcommand{\cY}{{\mathcal{Y}}}

% boldface letters

% blackboard letters

\newcommand{\bbG}{{\mathbb{G}}}

\newcommand{\bbQ}{{\mathbb{Q}}}
\newcommand{\bbR}{{\mathbb{R}}}

\newcommand{\bbZ}{{\mathbb{Z}}}

% tilde'd letters

\newcommand{\tc}{\widetilde{c}}

\newcommand{\tp}{\widetilde{p}}

\newcommand{\ts}{\widetilde{s}}
\newcommand{\tu}{\widetilde{u}}
\newcommand{\tv}{\widetilde{v}}
\newcommand{\tw}{\widetilde{w}}
\newcommand{\tx}{\widetilde{x}}

\newcommand{\tB}{\widetilde{B}}

\newcommand{\tS}{\widetilde{S}}
\newcommand{\tT}{\widetilde{T}}
\newcommand{\tU}{\widetilde{U}}

%mathfrak letters

\newcommand{\fI}{\mathfrak{I}}

\newcommand{\fO}{\mathfrak{O}}

\newcommand{\fQ}{\mathfrak{Q}}

\newcommand{\fm}{\mathfrak{m}}

\newcommand{\fp}{\mathfrak{p}}

%underline

%\newcommand{\H}{{\underline{H}}}

%\newcommand{\L}{{\underline{L}}}

%\newcommand{\O}{{\underline{O}}}
%\newcommand{\P}{{\underline{P}}}

\begin{document}
\title{Congruences of parahoric group schemes}
\author{Radhika Ganapathy}

\address{School of Mathematics, Tata Institute of Fundamental Research, Homi Bhabha Road, Colaba, Mumbai, India.}
\email{radhika@math.tifr.res.in}
\subjclass[2010]{22E50, 11F70}
\begin{abstract} Let $F$ be a non-archimedean local field and let $T$ be a torus over $F$. With $\cT^{NR}$ denoting the N\'eron-Raynaud model of $T$, a result of Chai and Yu asserts that the model  $\cT^{NR} \times_{\fO_F} \fO_F/\fp_F^m$ is canonically determined by $(\Tr_l(F), \Lambda)$ for $l>>m$, where $\Tr_l(F) = (\fO_F/\fp_F^l, \fp_F/\fp_F^{l+1}, \epsilon)$ with $\epsilon$ denoting the natural projection of $\fp_F/\fp_F^{l+1}$ on $\fp_F/\fp_F^l$, and $\Lambda:=X_*(T)$. In this article we  prove an analogous result for parahoric group schemes attached to facets in the Bruhat-Tits building of a connected reductive group over $F$.
\end{abstract}
\maketitle

\section{Introduction}

Let $F$ be a non-archimedean local field, $\fO_F$ its ring of integers, and $\fp_F$ its maximal ideal. Let $T$ be a torus over $F$. Such a torus is canonically determined by the lattice $\Lambda:=X_*(T)$ together with the action of $\Gamma_F = \Gal(F_s/F)$ on it (here $F_s$ is a separable closure of $F$).   
For large $m$, the action of $\Gamma_F$ on $\Lambda$ factors through the quotient $\Gamma_F/I_F^m$ of $\Gamma_F$, where $I_F^m$ is the $m$-th higher ramification subgroup (with upper numbering) of the inertia group $I_F$. This Galois group depends only on truncated data $\Tr_m(F):=(\fO_F/\fp_F^m, \fp_F/\fp_F^{m+1}, \epsilon)$, where $\epsilon$ is the natural projection of $\fp_F/\fp_F^{m+1}$ on $\fp_F/\fp_F^m$, via Deligne's theory; see (b) below.

Let $\cT^{NR}$ denote the N\'eron-Raynaud model of $T$ (see \cite{BLR}). The main result of \cite{CY01} asserts that $\cT^{NR} \times_{\fO_F} \fO_F/\fp_F^m$ is canonically determined by $(\Tr_l(F), \Lambda)$ for $l>>m$ (see Theorem 8.5 of \cite{CY01} for the precise statement; the parameters that $l$ depends on are also explicitly determined there). With $\cT$ denoting the neutral component of $\cT^{NR}$ this also implies that $\cT \times_{\fO_F} \fO_F/\fp_F^m$ is canonically determined by $(\Tr_l(F), \Lambda)$ with $l$ as above. From the point of view of Bruhat-Tits theory, when the connected reductive group is a torus, the model $\cT$ can be thought of as its Iwahori (or parahoric) group scheme. The purpose of this article is to prove an analogous result for parahoric group schemes attached to  facets in the Bruhat-Tits building of a connected reductive group over $F$. 

Our motivation for proving such a result arises naturally from the question of generalizing Kazhdan's theory of studying representation theory of split $p$-adic groups over close local fields to general connected reductive groups. Let us briefly recall the Deligne-Kazhdan correspondence:\\[4pt]
(a)  Given a local field $F'$ of characteristic $p$ and an integer $m \geq 1$, there exists a local field $F$ of characteristic 0 such that $F'$ is $m$-close to $F$, i.e., $\fO_F/\fp_F^m \cong \fO_{F'}/\fp_{F'}^m$.    \\[4pt]
(b) In \cite{Del84},  Deligne proved that if  $\Tr_m(F) \cong \Tr_m(F')$, then the Galois groups $\Gal(F_s/F)/I_F^m$ and $\Gal(F_s'/F')/I_{F'}^m$ are isomorphic.  This gives a bijection
\begin{align*}
& \text{\{Iso. classes of cont., complex, f.d. representations of $\Gal(F_s/F)$ trivial on $I_F^m$\}}\\
& \longleftrightarrow  \text{\{Iso. classes of cont., complex, f.d. representations of $\Gal(F_s'/F')$ trivial on $I_{F'}^m$\}}. 
\end{align*}
Moreover, all of the above holds when $\Gal(F_s/F)$ is replaced by $W_F$, the Weil group of $F$.\\[4pt]
(c)  Let $G$ be a split, connected reductive group defined over $\bbZ$. For an object $X$ associated to the field $F$, we will use the notation $X'$ to denote the corresponding object over $F'$. 
 In \cite{kaz86}, Kazhdan proved that  given $m \geq 1$, there exists $l \geq m$ such that if $F$ and $F'$ are $l$-close, then there is an algebra isomorphism $\Kaz_m:\mathscr{H}(G(F), K_m) \rightarrow \mathscr{H}(G(F'), K_m')$, where $K_m $ is the $m$-th usual congruence subgroup of $G(\fO_F)$. 
Hence, when the fields $F$ and $F'$ are sufficiently close, we have a bijection
 \begin{align*}
 &\text{\{Iso. classes of irr.  admissible representations $(\Pi, V)$ of $G(F)$ such that $\Pi^{K_m} \neq 0$\}} \\
  &\longleftrightarrow\text{\{Iso. classes of irr. admissible representations  $(\Pi', V')$ of $G(F')$ such that $\Pi'^{K_m'} \neq 0$\}}. 
\end{align*}
These results suggest that, if one understands the representation theory of $\Gal(F_s/F)$ for all  local fields $F$ of characteristic 0, then one can use it to understand the representation theory of $\Gal(F_s'/F')$ for a local field $F'$ of characteristic $p$, and similarly, with an understanding of the representation theory of ${G}(F)$ for all local fields $F$ of characteristic 0, one can study the representation theory of ${G}(F')$, for $F'$ of
characteristic $p$. This method has proved useful for studying the local Langlands correspondence for reductive $p$-adic groups in characteristic $p$ via the corresponding theory in characteristic 0 (see \cite{Bad02, Lem01,  Gan15, ABPS14, GV17}). An obvious observation, that goes into proving the Kazhdan isomorphism, is
\begin{align}\label{Kmiso0}
G(\fO_F)/K_m \cong G(\fO_F/\fp_F^m) \cong G(\fO_{F'}/\fp_{F'}^m)\cong G(\fO_{F'})/K_m' 
\end{align}
if the fields $F$ and $F'$ are $m$-close.

A useful variant of the Kazhdan isomorphism is now available for split reductive groups. Let $I$ be the standard Iwahori subgroup of $G$. It is shown in \cite{BT2} that there is a smooth affine group scheme $\cI$ defined over $\fO_F$ with generic fiber $G \times _\bbZ F$ such that $\cI(\fO_F)  =I.$ Define 
$I_m : = \Ker(\cI(\fO_F) \rightarrow \cI(\fO_F/\fp_F^m)).$
 In Section 3 of \cite{Gan15}, a presentation has been written down for this Hecke algebra $\cH(G, I_m)$ (extending Theorem 2.1 of \cite{How85} for $\GL_n$). Furthermore  if the fields $F$ and $F'$ are $m$-close, an argument of J.K.Yu (see Section 3.4.A of \cite{Gan15}) gives an isomorphism
\begin{align}\label{iiso1}
\beta: I/I_m &\rightarrow I'/I_m'.
\end{align}
Let us note here that unlike \eqref{Kmiso0}, the above isomorphism is not obvious since the group scheme $\cI$ is defined over $\fO_F$ and not over $\bbZ$. In fact the above isomorphism is obtained by proving that the reduction $\cI \times_{\fO_F} \fO_F/\fp_F^m$ depends only on $\Tr_m(F)$ and then evaluating it at the $\fO_F/\fp_F^m$-points. 
\ Using the presentation and this isomorphism, one gets an obvious map $\zeta_m : \cH(G(F), I_m) \rightarrow \cH(G(F'), I_m')$, when the fields $F$ and $F'$ are $m$-close (also see \cite{Lem01} for $\GL_n$), which was shown in \cite{Gan15} to be an isomorphism of rings. Hence we obtain a bijection 
\begin{align*}
 &\text{\{Iso. classes of irr. ad. representations $(\Pi, V)$ of $G(F)$ with $\Pi^{I_m} \neq 0$\}} \\
  & \longleftrightarrow\text{\{Iso. classes of irr. ad. representations  $(\Pi', V')$ of $G(F')$ with $\Pi'^{I_m'} \neq 0$\}}.
\end{align*}

When one wants to prove the Kazhdan isomorphism or its variant for general connected reductive groups, one is naturally led to consider parahoric subgroups, study the reduction of the underlying parahoric group schemes mod $\fp_F^m$, and prove that they are determined by truncated data. That is the goal of the present article.  Our proof is different from J.K.Yu's approach of proving \eqref{iiso1} for the Iwahori group scheme of a split $p$-adic group. We will use the construction of the parahoric group scheme via the Artin-Weil theorem (see \cite{La00}). Let us summarize the main results of this paper. 

First, given a split connected reductive group over $\bbZ$, one can unambiguously work with this group over an arbitrary field after base change. More generally, given a connected reductive group $G$ over $F$, we first need to make sense of what it means to give a group $G'$ over $F'$ where $F'$ is suitably close to $F$. Let us first explain how this is done for quasi-split groups. Let $(R,\Delta)$ be a based root datum and let $(G_0, T_0, B_0, \{u_\alpha\}_{\alpha \in \Delta})$ be a pinned, split, connected, reductive $\bbZ$-group with based root datum $(R, \Delta)$. We know that the $F$-isomorphism classes of quasi-split groups $G_q$ that are $F$-forms of $G_0$ are parametrized by the pointed cohomology set $H^1(\Gamma_F, Aut(R, \Delta))$ (see Theorem \ref{qsforms}). Let $E_{qs}(F, G_0)_m$  be the set of $F$-isomorphism classes of quasi-split groups $G_q$ that split (and become isomorphic to $G_0$) over an atmost $m$-ramified extension of $F$. It is easy to see that this is parametrized by the cohomology set
$H^1(\Gamma_F/I_F^m, Aut(R, \Delta))$. Using the Deligne isomorphism, we prove that there is a bijection $E_{qs}(F, G_0)_m \rightarrow E_{qs}(F', G_0')_m,\;\, G_q \rightarrow G_q',$ provided $F$ and $F'$ are $m$-close.  Moreover, with the cocycles chosen compatibly, this will yield data $(G_q, T_q, B_q)$ over $F$ (where $T_q$ is a maximal $F$-torus and $B_q$ is an $F$-Borel containing $T_q$), and correspondingly $(G_q', T_q', B_q')$ over $F'$, together with an isomorphism $X_*(T_q) \rightarrow X_*(T_q')$ that is $\Del_m$-equivariant (see Lemma \ref{charQSCLF}). It is a simple observation that the maximal $F$-split subtorus $S_q$ of $T_q$ is a maximal $F$-split torus in $G_q$ (see Lemma \ref{maxFsplit}). We prove that there is a simplicial isomorphism between the apartments $\cA_m: \cA(S_q,F) \rightarrow \cA(S_q', F')$ if the fields $F$ and $F'$ are $m$-close (see Proposition \ref{ACLF} for precise statement). Let $\cF$ be a facet in $\cA(S_q,F)$ and $\cF' = \cA_m(\cF)$. Then $\cF'$ is a facet in $\cA(S_q',F')$. We prove that the parahoric group schemes $\cP_{\cF} \times_{\fO_F} \fO_F/\fp_F^m$ and $\cP_{\cF'} \times_{\fO_{F'}} \fO_{F'}/\fp_{F'}^m$ are isomorphic provided $F$ and $F'$ are $l$-close for $l>>m$ (see Theorem \ref{QSP} and Proposition \ref{QSDP} for precise statements).  To prove this theorem, we prove an analogous statement for the root subgroup schemes if the fields $F$ and $F'$ are sufficiently close, invoke the result of Chai-Yu (see \cite{CY01}) that the reduction of the (lft) N\'eron models of the corresponding tori are isomorphic if the fields are sufficiently close, and use the Artin-Weil theorem on obtaining group schemes as solutions to birational group laws.

To move to the general case, we recall that any connected reductive group is an inner form of a quasi-split group, and the $F$-isomorphism classes of inner forms of $G_q$ is parametrized by the cohomology set $H^1(\Gal(\Fu/F), G_q^{ad}(\Fu))$ (where $\Fu$ is the maximal unramified extension of $F$ contained in $F_s$).  With $G_q'$ corresponding to $G_q$ as above, we prove in Lemma \ref{IF} that
\[H^1(\Gal(\Fu/F), G_q^{ad}(\Fu)) \cong H^1(\Gal(\FFu/F'), G_q^{ad}(\FFu))\]
as pointed sets if the fields $F$ and $F'$ are $m$-close using the work of Kottwitz (see \cite{Kot16}). Using the work of Debacker-Reeder \cite{DR} it is further possible to refine the above and obtain an isomorphism at the level of cocycles (see Section \ref{IFCCLF}). All the above yields data $(G, S, A)$ where $G$ is a connected reductive group over $F$ that is an inner form of $G_q$, a maximal $\Fu$-split $F$-torus $S$ that contains a maximal $F$-split torus $A$ of $G$, and similarly $(G',S',A')$ over $F'$, together with a $\Gal(\Fsh/F)$-equivariant simplicial isomorphism $\cA_{m,*}: \cA(S,\Fsh) \rightarrow \cA(S',\FFsh)$  (see Corollary \ref{IFACLF}). Here $\Fsh$ denotes the completion of $\Fu$. Let $\tilde\cF_*$ be a $\Gal(\Fsh/F)$-invariant facet in $\cA(S,\Fsh)$ and let $\tilde\cF'_* = \cA_{m,*}(\tilde\cF_*)$. We prove that there is a $\Gal(\Fsh/F)$-equivariant isomorphism 
\[\tilde p_{m,*}: \cP_{\tilde\cF_*} \times_{\fO_\Fsh} \fO_\Fsh/\fp_\Fsh^m \rightarrow \cP_{\tilde\cF'_*} \times_{\fO_{\FFsh}} \fO_{\FFsh}/\fp_{\FFsh}^m\]
 provided $F$ and $F'$ are $l$-close (see Proposition \ref{EDPCLF}). With $\cF_* :=(\tilde\cF_*)^{\Gal(\Fsh/F)}$ and $\cF'_* :=(\tilde\cF'_*)^{\Gal(\FFsh/F')}$, the above descends to an isomorphism of group schemes 
\[p_{m,*}: \cP_{\cF_*} \times_{\fO_F} \fO_F/\fp_F^m \rightarrow \cP_{\cF'_*} \times_{\fO_{F'}} \fO_{F'}/\fp_{F'}^m.\]
As a corollary, we obtain that
\[\cP_{\cF_*} (\fO_F/\fp_F^m) \cong \cP_{\cF'_*} (\fO_{F'}/\fp_{F'}^m)\] as groups provided the fields $F$ and $F'$ are $l$-close.

\section*{Acknowledgments}
I would like to express my gratitude to J. K. Yu for introducing me to questions related to this article and for the insightful discussions during my graduate school years. I thank the referees for the helpful comments and suggestions, particularly regarding removing the assumption on residue characteristic that was made in an earlier version of this article. I thank Dipendra Prasad and K. V.  Shuddhodan for the interesting discussions and for their encouragement. 

\section{Some review}
Unless otherwise stated, $F$ will denote a non-archimedean local field, that is, a complete discretely valued field with perfect residue field. Let $\fO_F$ denote its ring of integers, $\fp_F$ its maximal ideal, $\omega=\omega_F$ an additive valuation on $F$ normalized so that $\omega(F) = \bbZ$, and $\pi=\pi_F$ a uniformizer. Fix a separable closure $F_s$ of $F$ and let $\Gamma_F = \Gal(F_s/F)$.
\subsection{Deligne's theory}\label{Deligne}
Let $m \geq 1$.  Let $I_F$ be the inertia group of $F$ and  $I_F^m$ be its $m$-th  higher ramification subgroup with upper numbering (cf. Chapter IV of \cite{Ser79}). Let us summarize the results of Deligne \cite{Del84} that will be used later in this article.  Deligne considered the triplet $\Tr_m(F) = (\fO_F/\fp_F^m, \fp_F/\fp_F^{m+1}, \epsilon)$, where $\epsilon$ is the natural projection of $\fp_F/\fp_F^{m+1}$ on $\fp_F/\fp_F^m$, and proved that $\Gamma_F/I_F^m$ is canonically determined by $\Tr_m(F)$. Hence an isomorphism of triplets $\psi_m: \Tr_m(F) \rightarrow \Tr_m(F')$ gives rise to an isomorphism
\begin{equation}\label{Deliso}
\Gamma_F/I_F^m \xrightarrow{\Del_m} \Gamma_{F'}/I_{F'}^m
\end{equation}
that is unique up to inner automorphisms (see Equation 3.5.1 of \cite{Del84}). More precisely, given an integer $f \geq 0$, let $ext(F)^f$ denote the category of finite separable extensions $E/F$  satisfying the following condition: The normal closure $E_1$ of $E$ in $F_s$ satisfies $\Gal(E_1/F)^f = 1$. Deligne proved that an isomorphism $\psi_m: \Tr_m(F) \rightarrow \Tr_m(F')$ induces an equivalence of categories
$ext(F)^m \rightarrow ext(F')^m$. Here is a partial description of the map $\Del_m$ (see Section 1.3 of \cite{Del84}).  Let $L$ be a finite totally ramified Galois extension of $F$ satisfying $I(L/F)^m = 1$ (here $I(L/F)$ is the inertia group of $L/F$). Then  $L = F(\alpha)$ where $\alpha$ is a root of an Eisenstein polynomial \[P(x) = x^n + \pi \sum a_ix^i\] for $a_i \in \fO_F$.  Let $a_i' \in \fO_{F'}$ be such that $a_i \mod \fp^m \rightarrow a_i' \mod \fp'^m$. So $a_i'$ is well-defined mod $\fp'^m$.
Then the corresponding extension $L'/F'$ can be obtained as $L' = F'(\alpha')$ where $\alpha'$ is a root of the polynomial \[P'(x) = x^n + \pi' \sum a_i'x^i\] where $\pi \mod \fp_F^m \rightarrow \pi' \mod \fp_{F'}^m$.  The assumption that $I(L/F)^m =1$ ensures that the extension $L'$ does not depend on the choice of $a_i'$, up to a unique isomorphism.

\subsection{The main theorem of Chai-Yu} \label{CY}
Let $T$ be a torus over $F$, and let $K/F$ be a Galois extension such that $T$ is split over $K$. Let $\Gamma_{K/F} = \Gal(K/F)$ and let $\Lambda = X_*(T)$, the co-character group of $T$. Then $T$ is determined by the $\Gamma$-module $\Lambda$ upto a canonical isomorphism. Let $F'$ denote another non-archimedean local field, and we will denote the analogous objects over $F'$ with a superscript $'$. We introduce the following series of congruence notation.
\begin{itemize}[leftmargin=*]
\item $(\fO_F, \fO_K) \equiv_{\psi_m} (\fO_{F'}, \fO_{K'}) \;(level\;\; m):$\\
 This means that $\psi_m$ is an isomorphism $\fO_K/\pi^m\fO_K\rightarrow  \fO_{K'}/\pi'^m\fO_{K'}$ and induces an isomorphism $\fO_F/\pi^m\fO_{F} \rightarrow \fO_{F'}/\pi'^m\fO_{F'}$. We denote this induced isomorphism also by $\psi_m$. Having chosen the uniformizers, this also induces an isomorphism $\Tr_m(F) \rightarrow \Tr_m(F')$, which we still denote by $\psi_m$.
\item $(\fO_F, \fO_{K}, \Gamma_{K/F}) \equiv_{\psi_m,\gamma} (\fO_{F'}, \fO_{K'}, \Gamma_{K'/F'})\; (level\;\; m)$:\\ This means $(\fO_F, \fO_K) \equiv_{\psi_m} (\fO_{F'}, \fO_{K'}) (level\;\; m)$, $\gamma$ is an isomorphism $\Gamma_{K/F} \rightarrow \Gamma_{K'/F'}$, and $\psi_m$ is $\Gamma_{K/F}$-equivariant relative to $\gamma$. 
\item $(\fO_F, \fO_K, \Gamma_{K/F},\Lambda) \equiv_{\psi_m,\gamma, \lambda} (\fO_{F'}, \fO_{K'}, \Gamma_{K'/F'},\Lambda') \;(level\;\; m)$:\\ This means $(\fO_F, \fO_K, \Gamma_{K/F}) \equiv_{\alpha,\beta} (\fO_{F'}, \fO_{K'}, \Gamma_{K'/F'}) \;(level\;\; m)$ and $\lambda$ is an isomorphism $\Lambda \rightarrow \Lambda'$ which is $\Gamma_{K/F}$-equivariant relative to $\gamma$.
\end{itemize}
We say that ``$X$ is determined by $(\fO_F/\pi^m\fO_F, \fO_K/\pi^m\fO_K, \Gamma_{K/F}, \Lambda)''$ to mean that if
\[(\fO_F, \fO_K, \Gamma_{K/F},\Lambda) \equiv_{\psi_m,\gamma, \lambda} (\fO_{F'}, \fO_{K'}, \Gamma_{K'/F'},\Lambda')\; (level\;\; m)\]
then there is a canonical $\Gamma_{K/F}$-equivariant isomorphism $X \rightarrow X'$ determined by $(\psi_m, \gamma, \lambda)$. 

Let $\cT^{NR}$ denote the N\'eron-Raynaud model of $T$ considered in \cite{CY01}. This is a smooth model of $T$ with connected generic fiber such that $\cT^{NR}(\fO_\Fsh)$ is the maximal bounded subgroup of $T(\Fsh)$, where $\Fsh$ is the completion of the maximal unramified extension $\Fu$ of $F$ contained in $F_s$. This model is of finite type over $\fO_F$. 
\begin{theorem}[Theorem 8.5 of \cite{CY01}]  Let $m \geq 1$. There exists $l \geq m$ such that the model 
\[\cT^{NR} \times_{\fO_F} \fO_F/\fp_F^m\text{ is determined by } (\fO_F/\pi^l\fO_F, \fO_K/\pi^l\fO_K, \Gamma_{K/F}, \Lambda).\]
\end{theorem}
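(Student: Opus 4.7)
The plan is to follow the strategy of Chai--Yu by resolving $T$ by quasi-trivial tori and reducing to an explicit calculation with Weil restrictions. First, choose a resolution
\[
1 \to T \to P \to Q \to 1
\]
over $F$, where $P$ and $Q$ are finite products of induced tori of the form $\Res_{L/F}\bbG_m$ with each $L$ an intermediate field $F \subset L \subset K$; such resolutions always exist and, once enough choices are fixed, are determined by the $\Gamma_{K/F}$-module $\Lambda$. Applying the lft N\'eron model functor yields a complex
\[
1 \to \cT^{NR} \to \cP^{NR} \to \cQ^{NR}
\]
that is exact on the left, with a cokernel whose structure is controlled by the component groups of $\cP^{NR}$ and $\cQ^{NR}$.

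For each induced constituent, the lft N\'eron model of $\Res_{L/F}\bbG_m$ is simply $\Res_{\fO_L/\fO_F}\bbG_m$. Its reduction mod $\pi^m$ is the Weil restriction of $\bbG_m$ along $\fO_F/\pi^m\fO_F \to \fO_L/\pi^m\fO_L$, which is tautologically a functor of the truncated $\fO_F/\pi^m$-algebra $\fO_L/\pi^m\fO_L$. This truncated algebra is recovered from the data $(\fO_F/\pi^l\fO_F,\,\fO_K/\pi^l\fO_K,\,\Gamma_{K/F})$ as $(\fO_K/\pi^l\fO_K)^{\Gal(K/L)}$, read off at level $m$, provided $l$ exceeds $m$ by at least the valuation of the different of $K/L$. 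The morphisms in the resolution are given by $\Gamma_{K/F}$-equivariant integral matrices of cocharacters and are therefore canonically transported by $\lambda$. Consequently the whole truncated complex of group schemes over $\fO_F/\pi^m$ is canonically determined by $(\psi_l,\gamma,\lambda)$.

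It remains to recover $\cT^{NR} \times_{\fO_F} \fO_F/\pi^m$ from this truncated complex as a suitable truncated kernel. The main technical obstacle, and the reason one must take $l \gg m$ rather than $l=m$, is twofold. First, extracting $\fO_L/\pi^m\fO_L$ as Galois invariants inside $\fO_K/\pi^l\fO_K$ demands enough ``room'' $l - m$ to absorb the different of $K/L$. Second, and more seriously, the N\'eron functor is only left exact: on generic fibers $\cT^{NR}$ is literally the kernel of $\cP^{NR} \to \cQ^{NR}$, but on integral models this kernel differs from $\cT^{NR}$ by a finite flat contribution coming from the component groups. Controlling this discrepancy at a finite level of truncation, and making the resulting isomorphism canonical in $(\psi_l,\gamma,\lambda)$, is the heart of the Chai--Yu argument and is what forces the explicit quantitative relation between $l$ and $m$ in terms of the ramification data of $K/F$.
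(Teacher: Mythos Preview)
The paper does not prove this theorem at all: it is quoted verbatim as Theorem~8.5 of \cite{CY01} and used as a black box (the only argument given nearby is the one-line deduction of Lemma~\ref{torusCLF} for the neutral component). So there is no ``paper's own proof'' to compare your proposal against.

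That said, your sketch is a reasonable summary of the overall strategy in \cite{CY01}: resolve $T$ by induced tori, note that N\'eron models of induced tori are Weil restrictions whose truncations are visibly determined by the truncated ring data, and then recover $\cT^{NR}$ from the resolution at the truncated level. You are also right that the real work lies in the last step, and that this is what forces $l \gg m$. However, as written your proposal is an outline rather than a proof: the sentence ``Controlling this discrepancy at a finite level of truncation \ldots\ is the heart of the Chai--Yu argument'' is an acknowledgment that you have not actually carried out the key step. In \cite{CY01} this step is handled not by a soft kernel/cokernel argument but through their explicit theory of standard models, congruence of Lie algebras, and a careful smoothening procedure; the quantitative bound on $l$ in terms of $m$ and the ramification invariants (their constants $c$, $h$, the bound involving the Artin conductor) comes out of that analysis, not from general nonsense about component groups. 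If you intend this as a self-contained proof rather than a pointer to \cite{CY01}, that gap would need to be filled.
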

The parameters that $l$ depends on are also explicitly determined in Theorem 8.5 of \cite{CY01}. Let $\cT$ denote the neutral component of $\cT^{NR}$. This is a smooth model over $\fO_F$ with connected generic and special fibers, and is of finite type over $\fO_F$. Its $\fO_{\Fsh}$-points is the Iwahori subgroup of $T(\Fsh)$. 
\begin{lemma}\label{torusCLF}  Let $\cT$,  $l \geq m$ as above. Then the model 
\[\cT \times_{\fO_F} \fO_F/\fp_F^m\text{ is determined by } (\fO_F/\pi^l\fO_F, \fO_K/\pi^l\fO_K, \Gamma_{K/F}, \Lambda).\]
\end{lemma}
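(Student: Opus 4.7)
The plan is to deduce this lemma directly from the Chai--Yu theorem by exhibiting $\cT \times_{\fO_F} \fO_F/\fp_F^m$ as a canonical open subscheme of $\cT^{NR} \times_{\fO_F} \fO_F/\fp_F^m$. By definition, $\cT$ is obtained from $\cT^{NR}$ by excising the closed subset $Z := \cT^{NR}_s \setminus (\cT^{NR}_s)^\circ$ of non-identity components of the special fiber. Since $\cT^{NR} \times_{\fO_F} \fO_F/\fp_F^m$ has the same underlying topological space as $\cT^{NR}_s$, the truncation $\cT \times_{\fO_F} \fO_F/\fp_F^m$ is simply obtained from $\cT^{NR} \times_{\fO_F} \fO_F/\fp_F^m$ by removing $Z$; the analogous statement holds over $F'$.

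Given truncated data with
\[(\fO_F, \fO_K, \Gamma_{K/F},\Lambda) \equiv_{\psi_l,\gamma, \lambda} (\fO_{F'}, \fO_{K'}, \Gamma_{K'/F'},\Lambda')\; (level\;\; l)\]
for $l$ as in the Chai--Yu theorem, I would apply that theorem to produce a canonical $\Gamma_{K/F}$-equivariant isomorphism
\[\Phi_m: \cT^{NR} \times_{\fO_F} \fO_F/\fp_F^m \xrightarrow{\sim} \cT'^{NR} \times_{\fO_{F'}} \fO_{F'}/\fp_{F'}^m.\]
Because $\Phi_m$ is an isomorphism of group schemes, it carries the identity section to the identity section and hence, at the level of special fibers, identifies the identity components $(\cT^{NR}_s)^\circ$ and $(\cT'^{NR}_{s'})^\circ$. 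Consequently $\Phi_m$ maps $Z$ bijectively onto the corresponding $Z'$, so restricting $\Phi_m$ to the open complement of $Z$ yields the desired canonical isomorphism
\[\cT \times_{\fO_F} \fO_F/\fp_F^m \xrightarrow{\sim} \cT' \times_{\fO_{F'}} \fO_{F'}/\fp_{F'}^m\]
determined by $(\psi_l,\gamma,\lambda)$.

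The argument is essentially formal, so I do not anticipate a real obstacle; all the genuine content lies in the Chai--Yu theorem itself. The one point that needs to be verified explicitly is that the isomorphism they produce is one of group schemes (not merely of schemes), so that the identity section, and hence the identity component of the special fiber, is preserved; this is automatic from their construction, which is functorial in the $\Gamma_{K/F}$-module $\Lambda$ through standard N\'eron model functoriality.
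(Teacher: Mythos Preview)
Your proof is correct and follows essentially the same approach as the paper. The paper's proof is more terse: it invokes the Chai--Yu theorem together with the single observation that formation of the identity component commutes with base change along $\Spec(\fO_F/\fp_F^m) \to \Spec(\fO_F)$, i.e.\ $(\cT^{NR} \times_{\fO_F} \fO_F/\fp_F^m)^0 = \cT \times_{\fO_F} \fO_F/\fp_F^m$; your topological argument (same underlying space as the special fiber, remove the closed set $Z$ of non-identity components) is exactly a justification of that commutation statement.
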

\begin{proof}

This lemma follows from Lemma 8.5 of \cite{CY01} and the observation that the formation of $\cT$ commutes with any base change on $\Spec(\fO_F)$, that is, 
\[ (\cT^{NR} \times_{\fO_F} \fO_F/\fp_F^m)^0 = \cT \times_{\fO_F} \fO_F/\fp_F^m.\qedhere \]
\end{proof}

When the connected reductive group is a torus $T$, the  model $\cT$ is its Iwahori (or parahoric) group scheme. We will study congruences of parahoric group schemes attached to facets in the Bruhat-Tits building of a connected reductive group $G$ over $F$. To this end, let us recall some results from Bruhat-Tits theory and the construction of parahoric group schemes (using Artin-Weil theorem, following \cite{La00}), that will used later in this article.

 Given a  connected reductive group $G$ over $F$, let $G^{der}$ denote the derived subgroup of $G$, and  $G^{ad}$ its adjoint group. Let $\cB(G,F)$  denote the reduced Bruhat-Tits building of $G$ over $F$, that is, the building of $G^{der}$ over $F$. The building is obtained by gluing together apartments $\cA(S,F)$ where $S$ runs over the maximal $F$-split tori in $G$. The apartment $\cA(S,F)$ is an affine space under $X_*(S^{der}) \otimes_\bbZ \bbR$ where $S^{der} = S \cap G^{der}$. Let $\cF$ be a facet in $\cB(G,F)$ and let $P_\cF$ denote the parahoric subgroup of $G(F)$ attached to $\cF$. Bruhat-Tits show that there exists a smooth affine $\fO_F$-group scheme $\cP_\cF$ with generic fiber $G$ such that $\cP_\cF(\fO_F) = P_\cF$. We recall the construction of $\cP_\cF$, following Landvogt (\cite{La00}). The parahoric group scheme is first constructed over $\Fsh$ (note that $G_{\Fsh}$ is quasi-split), and the model over $F$ is obtained using \'etale descent.
 
\subsection{Structure of quasi-split groups}
Let $G$ denote a quasi-split connected reductive group over $F$.  Let $S$ be a maximal $F$-split torus in $G$ and let $T$ (resp. $N$) be the centralizer (resp. normalizer) of $S$ in $G$. Let $B$ be an $F$-Borel subgroup of $G$ with $T \subset B$. Note that $T$ is a maximal $F$-torus in $G$.   Further $G$ and $T$ split over $F_s$ and the Galois group $\Gamma_F$ acts on the group of characters $X^*(T)$ of $T$, preserves the root system $\Phi(G,T)$ of $T$ in $G$, and also the base $\tilde{\Delta}$ of $\Phi(G,T)$ associated to the Borel subgroup $B$. Let $K \subset F_s$ denote the smallest sub-extension of $F_s$ splitting $T$ (and hence $G$). Let $\Phi(G,S)$ denote the set of roots of $S$ in $G$.

\subsubsection{Root subgroups $U_a, a \in \Phi(G, S)$}\label{CSsystem} The elements of $\Phi(G,S)$ are restrictions of elements of $\Phi(G,T)$ to $S$, and the restrictions to $S$ of the elements of $\tilde{\Delta}$ form a basis $\Delta$ of $\Phi(G,S)$. Moreover, the elements of $\tilde{\Delta}$ that have the same restriction to $S$ form a single Galois orbit for the action of $\Gamma_F$ on $\tilde{\Delta}$. 
For $\alpha \in \Phi(G,T)$, let $\tU_\alpha$ be the corresponding root subgroup of $G_{K}$. The group $\Gamma_{K/F}$ permutes $\tU_\alpha$ and $\gamma(\tU_\alpha) =  \tU_{\gamma(\alpha)}$. Let $\Sigma_\alpha$ be the stabilizer of $\tU_\alpha$ and let $L_\alpha$ be the corresponding field of invariants. We say that $L_\alpha$ is the \textit{field of definition} of $\alpha$. Note that $\tU_\alpha$ is defined over $L_\alpha$ by Galois descent. Let $\{\tx_\alpha:\bbG_{a,L_\alpha} \rightarrow \tU_\alpha\;|\; \alpha \in \Phi(G,T)\}$ denote a Chevalley-Steinberg splitting of $G$. It has the following properties.

\begin{enumerate}[leftmargin=*]
\item If the restriction $a$ of $\alpha \in \Phi(G,T)$ to $S$ is an indivisible element of $\Phi(G,S)$, then $\tx_\alpha$ is an $L_\alpha$-isomorphism of $\bbG_a$ to $\tU_\alpha$ and we have $\tx_{\gamma(\alpha)} = \gamma \circ \tx_\alpha \circ \gamma^{-1}$ for each $\gamma \in \Gal(K/F)$. 
\item If the restriction $a$ of $\alpha \in \Phi(G,T)$ to $S$ is divisible, then there exists two distinct roots $\beta, \beta' \in \Phi(G,T)$ of restriction $a/2$ to $S$ such that $\alpha = \beta +\beta'$; we have $L_\beta = L_{\beta'}$, $L_\beta$ is a quadratic separable extension of $L_\alpha$ and for each $\gamma \in \Gal(K/F)$ there exists $\epsilon = \pm 1$ such that $\gamma \circ \tx_\alpha(u) \circ \gamma^{-1} = \tx_{\gamma(\alpha)}(\epsilon u)$; if $\gamma \in \Gal(K/L_{\alpha})$, we have $\epsilon = -1$ if and only if $\gamma$ induces the unique non-trivial automorphism of $L_\beta$. 
\end{enumerate} 
 Now we describe all possible structures for the root subgroups $U_a, a \in \Phi(G,S)$.  We may and do assume that $a \in \Delta$. Let $\tilde{\Delta}_a$ be the orbit of $\Gamma_{K/F}$ in $\tilde{\Delta}$. Let $\pi: G^a \rightarrow \langle U_a, U_{-a}\rangle$ be the universal cover of the semisimple group generated by $U_a$ and $U_{-a}$. The classification of Dynkin diagrams gives two possible cases:
\begin{enumerate}[wide, labelwidth=!, labelindent=0pt]
\item[\textbf{Case} I.] The group $G_K^a$ is isomorphic to a product of the groups $\SL_2$ indexed by $\tilde{\Delta}_a$ and are permuted transitively by $\Gal(K/F)$, the field of definition of the factor of index $\alpha$ is $L_\alpha$ and $G^a \cong \mathrm{Res}_{L_\alpha/F} \SL_2$. Then $U_a \cong \mathrm{Res}_{L_{\alpha}/F} \tU_\alpha$ for $\alpha \in \tilde{\Delta}_a$. If $\tx_\alpha:L_\alpha \rightarrow \tU_\alpha$, then $x_a = \Res_{L_{\alpha}/F} \tx_\alpha$ is a $F$-isomorphism of $\Res_{L_{\alpha}/F} \bbG_a$ to $U_a$; the pair $(L_\alpha, x_a)$ is called a pinning of $U_a$.  Via $x_a$, we obtain an isomorphism of $L_\alpha$ with $U_a(F)$, which we also denote by $x_a$. If $(\tx_\beta)_{\beta \in \tilde{\Delta}}$ is an Chevalley-Steinberg splitting of $G$, then we have for each $u \in L_\alpha$,
\begin{align}\label{2anotroot}
x_a(u) = \prod_{\beta \in \tilde{\Delta}_a} \tx_\beta(u_\beta)
\end{align}
In the above, $\beta = \gamma(\alpha)$ for some $\gamma \in \Gamma_{K/F}$ and $u_\beta := \gamma(u)$. 
The subgroups $U_{-a}$ and the splitting $x_{-a}$ are obtained using $U_{-\alpha}$ and $\tx_{-\alpha}$ analogously. 
\item[\textbf{Case} II.] The group  $G_K^a$  is isomorphic to a product of the groups $\SL_3$ indexed by the set $I$ consisting of pairs of two elements $\{\alpha, \bar\alpha\}$ of $\tilde{\Delta}_a$ such that $\alpha+\bar\alpha$ is a root. We have $L_\alpha = L_{\bar\alpha}$, $L_\alpha$ is a quadratic extension of $L_{\alpha+\bar\alpha}$. The simple factor $\bar G$ of index $\{\alpha,\bar\alpha\}$ is defined over $ L_{\alpha+\bar\alpha}$, split over $ L_{\alpha}$, and is isomorphic over $L_{\alpha+\bar\alpha}$ to the special unitary group of the Hermitian form $h:(x_{-1}, x_0, x_1) \rightarrow \tau(x_{-1})x_1+\tau(x_0)x_0 + \tau(x_1)x_{-1}$ over $L^3$. Here $\tau$ is the unique non-trivial element of $\Gal(L_\alpha/L_{\alpha+\bar\alpha})$. We denote this simple factor as $\SU_3$, and then $G^a \cong \mathrm{Res}_{L_{\alpha+\bar\alpha}/F} \SU_3$.   

Let $H_0(L_\alpha,L_{\alpha+\bar\alpha}) := \{(u,v) \in L_\alpha \times L_\alpha\;|\; v+\tau(v) = u\tau(u)\}$ denote the $L_{\alpha+\bar\alpha}$-group with group law $(u,v) \cdot (\tu,\tv) = (u+\tu, v+\tv+ \tau(u)\tu).$ Then $\zeta : (u,v)\rightarrow \tx_\alpha(u) \tx_{\alpha+\bar\alpha}(-v)\tx_{\bar\alpha}(\tau(u))$
is an $L_{\alpha+\bar\alpha}$-group isomorphism of $H_0(L_\alpha,L_{\alpha+\bar\alpha})$ with the subgroup $\bar U = \tU_\alpha \tU_{\alpha+\bar\alpha}\tU_{\bar\alpha}$ of $\bar G$. Then $U_a = \Res_{L_{\alpha+\bar\alpha}/F}\bar U$ and $x_a =  \Res_{L_{\alpha+\bar\alpha}/K}\zeta$ is an $F$-isomorphism of groups $H(L_\alpha,L_{\alpha+\bar\alpha}) = \Res_{L_{\alpha+\bar\alpha}/F} H_0(L_\alpha,L_{\alpha+\bar\alpha})$ with $U_a$. Further, for $(u,v) \in L_\alpha \times L_\alpha$,
\[x_a(u,v) = \prod \tx_{\beta}(u_\beta) \tx_{\beta+\bar\beta}(-v_\beta) \tx_{\bar\beta}(\tau(u_\beta))\]
In the above, for each $\beta$, we choose $\gamma \in \Gal(K/F)$ such that $\beta = \gamma(\alpha)$; then $\bar\beta = \gamma(\bar\alpha)$, $\tx_\beta = \gamma \circ \tx_{\alpha} \circ \gamma^{-1},\; \tx_{\bar\beta} = \gamma \circ \tx_{\bar\alpha} \circ \gamma^{-1},\; \tx_{\beta+\bar\beta} = \gamma \circ \tx_{\alpha+\bar\alpha} \circ \gamma^{-1},\; u_\beta = \gamma(u), v_\beta = \gamma(v)$.  

Note that the root subgroup $U_{2a}(K)$ associated to the root $2a$ consists of elements $x_a(0,v)$ where $v \in L_\alpha^0:=\{v \in L_\alpha\;|\; v +\tau(v)=0\}$ and the map $v \rightarrow x_a(0,v)$ is an $F$-vector space isomorphism of $L^0$ with $U_{2a}(K)$. 
\end{enumerate}

\subsubsection{On the splitting extension of the root}\label{splitting extension} Let $a \in \Phi^{red}(G,S)$ with $2a$ is not a root. We fix a pinning $(L_\alpha, x_a)$ of $U_a$ where $\alpha \in \tDelta_a$ as in (I) above. The subset of endomorphisms of the $F$-vector space $U_a$ of the form $\mu_{x_a}(t): x_a(u)\rightarrow x_a(tu)$ for $t \in L_\alpha$ does not depend on the choice of $(L_\alpha,x_a)$ (see Section 4.1.8 of \cite{BT2}). This is denoted by $L_a$ and is called the field attached of the root $a$.  It is isomorphic to $L_\alpha$ via the map $t \rightarrow \mu_{x_a}(t)$.  Its inverse gives an embedding of $L_a \hookrightarrow K$. A similar definition is obtained when $2a$ is a root in Section 4.1.14 of \cite{BT2}. 

\subsubsection{Valuations}
Let $\omega:F \rightarrow \bbR^\times$ be as in Section \ref{Deligne}, and we denote its extension to $K$  also as $\omega$.
The notion of valuation of root datum was defined in \cite{BT1}. For $\alpha \in \Phi(G,T)$, and put
\[ \phi_\alpha(\tx_\alpha(u)) = \omega(u), u \in K^\times.\] Then $\tilde{\phi} = (\phi_\alpha)_{\alpha \in \Phi(G,T)}$ defines a valuation of the root datum $(T_K, (\tU_\alpha)_{\alpha \in \Phi(G,T)})$ in the group $G(K)$ (recall that $G_K$ is split). It is shown in \cite{BT2} that $\tilde{\phi}$ descends to $(T, (\tU_a)_{a \in \Phi(G,S)})$ and defines a valuation on it. We explicitly define $\phi_a: U_a(F) \backslash \{1\} \rightarrow \bbR$ from $\tilde{\phi}$. For $a \in \Phi(G,S)$, let $A$ (resp. $B$) be the set of $\alpha \in \Phi(G,T)$ whose restriction to $S$ is $a$ (resp. $2a$). For $u \in U_a(F)$, there exist unique $\tilde u_\alpha$ such that $u = \prod_{\alpha \in A \cup B} \tilde u_\alpha$
for an arbitrary ordering of $A \cup B$ and we put
\[ \phi_a(u) = \inf\left(\inf_{\alpha \in A} \tilde{\phi}_\alpha(\tilde u_\alpha), \inf_{\alpha \in B} \frac{1}{2}\tilde{\phi}_\alpha(\tilde u_\alpha)\right).\]
This number is independent of the choice of ordering of $A \cup B$. 
Then $\phi = (\phi_a)_{a \in \Phi(G,S)}$ defines a valuation of root datum on $(T, (U_a)_{a \in \Phi(G,S)})$ (see Section 4.2.2 of \cite{BT2}).

\subsection{Parahoric group schemes; quasi-split descent}

In this section, we assume that $F$ is also strictly Henselian, that is its residue field is separably closed. 
\subsubsection{Affine root system and the associated Weyl groups}\label{ARS} The apartment $\cA(S,F)$ can also be thought of as the set of valuations that are equipollent to $\phi =(\phi_a)_{a \in \Phi(G,S)}$, where $\phi$ as above. This is an affine space under $X_*(S^{der}) \otimes_\bbZ \bbR$ and $N(F)$ acts on it by affine transformations (see Section 6.2.2 of \cite{BT1}). Let us denote the point of $\cA(S,F)$ corresponding to $\phi$ as $x_0$. For $a \in \Phi(G,S)$, let $\Gamma_a = \phi_a(U_a(F) \backslash \{1\})$ and
\[\tilde\Gamma_a = \{\phi_a(u)\; |\; u \in U_a(F) \backslash \{1\},\; \phi_a(u) = \sup \phi_a(uU_{2a}(F))\}.\]
Here we have used the convention that $U_{2a} =1$ if $2a$ is not a root. Let
\[\Phi^{af}(G,S) = \{ \psi: \cA(S,F) \rightarrow \bbR\;|\;\psi(\cdot ) = a(\cdot -x_0) +l, a \in \Phi(G,S),\; l \in \tilde\Gamma_a\}\]
denote the set of affine roots of $S$ in $G$. 
Choosing $x_0$ allows us to identify $A(S,F)$ with $X_*(S^{der}) \otimes_\bbZ \bbR$. With this identification, the vanishing hyperplanes coming from $\Phi(G,S)^{af}$ makes $\cA(S,F)$ into a (poly)simplicial complex.  The group generated by reflections through the hyperplanes coming from $\Phi(G,S)^{af}$ is the affine Weyl group denoted by $W^{af}$.  The extended affine Weyl group is defined as $W^e: = N(F)/T(F)_1$ where 
$T(F)_1$ is the kernel of the Kottwitz homomorphism $\kappa_T: T(F) \rightarrow X^*(\hat T^{I_F}) = X_*(T)_{I_F}$ (see \cite{HR08}).
With $W: = W(G,S)$, the group $W^e$ hence fits into an exact sequence
\[1 \rightarrow X_*(T)_{I_F} \rightarrow W^e \rightarrow W \rightarrow 1.\]
\subsubsection{The associated root subgroup schemes} \label{RSGS}Let us recall the filtrations on root subgroups and the associated root subgroup schemes from Section 4.3 of \cite{BT2}. For $a \in \Phi(G,S)$, let $\phi_a: U_a(F) \rightarrow \bbR \cup \{\infty\}$ be as above. For $k \in \bbR$, Let $U_{a,k} = \{u \in U_a(F)\;|\; \phi_a(u) \geq k\}$. Next, let us describe the associated root subgroup schemes. 
\begin{enumerate}[wide, labelwidth=!, labelindent=0pt]
\item[\textbf{Case} I.] Let $a \in \Phi^{red}(G,S)$ such that $2a \notin \Phi(G,S)$. For $k \in \tilde\Gamma_a$, let $L_{a,k} = \{ u \in L_a\;|\; \omega(u) \geq k\}$. Then $L_{a,k}$ is a free $\fO_{F}$-module of finite type. Let $\cL_{a,k}$ be the canonical smooth $\fO_{F}$-group scheme associated to this module (More precisely, given a free $\fO_F$-module $M$ of finite type, the functor taking any $\fO_F$-algebra $R$ to the additive group $R \otimes M$ is representable by a smooth $\fO_F$-group scheme $\cM$ whose affine algebra is identified with the symmetric algebra of the dual of $M$). Let $U_{a,k}$ be the image under $x_a$ of $L_{a,k}$ and let $\cU_{a,k}$  be the $\fO_F$-group scheme obtained by transport of structure using $x_a$. Then $\cU_{a,k}$ has generic fiber $U_a$ and $\cU_{a,k}(\fO_F) = U_{a,k}$. The definition is extended to $k \in \bbR\backslash \{0\}$ in Section 4.3.2 of \cite{BT2}.
\item[\textbf{Case} II.] Let $a \in \Phi^{red}(G,S)$ with $2a \in \Phi(G,S)$. The root subgroup $U_a \cong \Res^{L_{2a}}_F H_0(L_a,L_{2a})$ via $x_a$. In order to describe the root subgroup schemes of the filtration $U_{a,k}$, we use an alternate description of $H_0(L_a,L_{2a})$.  Recall that $L_a^0$ is the set of trace 0 elements of $L_a$. Let $L_a^1$ denote the set of trace 1 elements in $L_a$ and let 
\[(L_a)_{max}^1:=\{ \lambda \in L_a^1\;|\; \omega(\lambda) = \sup\{\omega(x)\;|\; x \in L_a^1\}\}.\]
Note that $(L_a)_{max}^1 \neq \emptyset$ and when the residue field of $L_a$ is of characteristic $\neq 2$, $1/2 \in (L_a)_{max}^1$. Let $\lambda \in (L_a)_{max}^1$ and let  $H_0^\lambda:=L_a \times L_a^0$ equipped with the action
\begin{equation}\label{glgf}
(u,v)\cdot (\tu,\tv) = (u+\tu, v+\tv - \lambda u \tau(\tu) +\tau(\lambda) \tau(u)\tu).
\end{equation}
Then $H_0^\lambda$ is an algebraic $L_{2a}$-group and $j_\lambda: (u,v) \rightarrow (u, v - \lambda \tau(u)u)$ is an $L_{2a}$-group isomorphism of $H_0(L_a,L_{2a})$ onto $H_0^\lambda$. Let $H^\lambda = \Res_{F}^{L_{2a}} H_0^\lambda$.

Let $\gamma =-\frac{1}{2}\omega(\lambda)$. For $k\in \tilde\Gamma_a$, let $l= 2k+\frac{1}{e_a}$, 
and \[L_{a, k+\gamma}:=\{ u \in L_a\;| \; \omega(u) \geq k+\gamma\} \text{ and }  L_{a,l}^0:=\{ u \in L_a^0\;|\;\omega(u) \geq l\}.\]
Up to isomorphism, there exists a unique smooth affine $\fO_F$-group scheme $\cH_k^\lambda$ of finite type with generic fibre $H^\lambda$ and such that $\cH_k^\lambda(\fO_F) = L_{a,k+\gamma} \times L_{a,l}^0$ and a group law, which induces the group law \eqref{glgf} on the generic fibre (See Section 4.3.5 of \cite{BT2}). In more detail, let $\cL_{a,k+\gamma}$ and $\cL_{a,l}^0$ be the canonical $\fO_{L_{2a}}$-group schemes associated to $L_{a,k+\gamma}$ and $L_{a,l}^0$. Let $\cH_{0,k}^\lambda = \cL_{a,k+\gamma} \times \cL_{a,l}^0$. 
The map $L_a \times L_a \rightarrow L_a^0, (u,u') \rightarrow \lambda u \tau(\tu) - \tau(\lambda)\tau(u)\tu$ can be extended uniquely to a morphism $\cL_{a,k+\gamma} \times \cL_{a,k+\gamma} \rightarrow \cL_{a,l}^0$. Hence the group law can be extended to $\cH_{0,k}^\lambda$. Let $\cH_k^\lambda: = \Res_{F}^{\fO_{L_{2a}}} H_{0,k}^\lambda$. By transport of structure using $x_a \circ \Res_{F}^{L_{2a}} j_\lambda^{-1}$, we obtain the $\fO_F$-group scheme $\cU_{a,k}$. These definitions are extended to $k,l \in \bbR\backslash \{0\}$ in Section 4.3.8 of \cite{BT2}.

Using the isomorphism $v \rightarrow x_a(0,v)$ from $L_a^0 \rightarrow U_{2a}$, we obtain from the scheme $\cL_k^0$ (for $k \in \omega(L_a^0) \backslash 0$), an $\fO_F$-scheme whose generic fiber is $U_{2a}$ and denote it as $\cU_{2a,k}$ (see Section 4.3.7 of \cite{BT2} for further details). 
\end{enumerate}

\subsubsection{Construction of parahoric group schemes over $F$}\label{ParahoricLandvogt}
In this section, we recall the construction of parahoric group schemes, following \cite{La00}. Given $x \in \cA(S,F)$, let $f_x: \Phi(G,S) \rightarrow \bbR$ be the function $f_x(a) = a(x-x_0)$, where $x_0$ is the unique point arising from quasi-split descent as in Section \ref{ARS}. Let $U_{a,x} := U_{a, f_x(a)}$. Let $\cU_{a,x}$ be the smooth affine group scheme over $\fO_F$ with generic fiber $U_a$ and with $\cU_{a,x}(\fO_F) = U_{a,x}$ (as in Section \ref{RSGS}).  For $\Psi = \Phi^+(G,S)$ and $\Psi = \Phi^{-}(G,S)$, Proposition 3.3.2 of \cite{BT2}  gives a unique smooth affine $\fO_F$-group scheme $\cU_{\Psi,x}$ of finite type with generic fiber $U_{\Psi}$ and the property that for every good ordering of $\Psi^{red}$ (See Section 3.1.2 of \cite{BT2}), the $F$-isomorphism $\prod_{a \in \Psi} U_a \rightarrow U_{\Psi}$ can be extended to an $\fO_F$-isomorphism $\prod_{a \in \Psi} \cU_{a,x} \rightarrow \cU_{\Psi,x}.$

The parahoric subgroup $P_x$ is generated by $\cT(\fO_F)$ and the  $U_{a,x}$ for $ a \in \Phi(G,S)$  (with $\cT$ is as in Section \ref{CY}). One of the main results of \cite{BT2} is that there is a unique smooth affine $\fO_F$-group scheme $\cP_x$ with generic fiber $G$ and with $\cP_x(\fO_F) = P_x$. We recall the construction of $\cP_x$ from \cite{La00}. The idea is to put an $\fO_F$-birational group law on $\cU_{\Phi^+,x} \times \cT \times \cU_{\Phi^-,x}$ and invoke Artin-Weil theorem (see Chapters 5 and 6 of \cite{BLR}) to construct $\cP_x$. Let us first introduce some notation. Let $\cU^\pm_x = \cU_{\Phi^{\pm}(G,S), x}$ and let $\cX_x = \cU_x^- \cT\cU_x^+$. Since its generic fiber $\cX_x \times_{\fO_F} F = U^-TU^+$ is an open neighborhood of the 1-section of $G$, there exists a unique $F$-birational group law on the generic fiber of $\cX_x$. We want to extend this to $\cX_x$. Since $U^-TU^+$ and $U^+TU^-$ are both open neighborhoods of the 1-section of $G$, there exist $f \in F[U^-TU^+]$ and $f' \in F[U^+TU^-]$ such that $F[U^-TU^+]_f = F[U^+TU^-]_{f'}$.
Without loss of generality, we may assume that $f \in \fO_F[U^-TU^+]\backslash \pi \fO_F[U^-TU^+] $ and $f' \in \fO_F[U^+TU^-] \backslash \pi\fO_F[U^+TU^-]$. 
Proposition 5.16 of \cite{La00} shows that inside $F[U^-TU^+]_f = F[U^+TU^-]_{f'}$, we have $\fO_F[\cU_x^-\cT\cU_x^+]_f = \fO_F[\cU_x^+\cT\cU_x^-]_{f'}$. 
So we will identify $(\cU_x^-\cT\cU_x^+)_f = (\cU_x^+\cT\cU_x^-)_{f'}$ in the following. By Proposition 5.8 of \cite{La00}, we can identify $\cT\cU_x^+$ and $\cU_x^+\cT$ and hence also $\cT\cU_x^+\cU_x^-$ and $\cU_x^+\cT\cU_x^-$. In $\cX_x \times \cX_x = \cU_x^- \times (\cT\times \cU_x^+\times  \cU_x^-) \times \cT\cU_x^+$, we consider the open subscheme
\begin{align*}
\cU_x^- \times (\cU_x^+\times \cT\times  \cU_x^-)_f \times \cT\cU_x^+&= \cU_x^- \times ( \cU_x^- \times \cT \times \cU_x^+)_{f'} \times \cT\cU_x^+\\&\subset \cU_x^- \times \cU_x^- \times \cT \times \cU_x^+ \times \cT\cU_x^+\\
&=(\cU_x^- \times\cU_x^-) \times(\cT\times \cT) \times (\cU_x^+ \times\cU_x^+ )\\
&\xrightarrow{mult^3} \cU_x^- \times \cT\times \cU_x^+
\end{align*}
So we obtain a morphism $\cU_x^- \times (\cU_x^+\times \cT\times  \cU_x^-)_f \times \cT\cU_x^+ \rightarrow \cX_x.$ Since $\cX_x$ has irreducible fibers over $\fO_F$ and since $f \notin \pi\fO_F[\cU_x^-\cT\cU_x^+]$, we see that $(\cU_x^-\cT\cU_x^+)_f$ is $\fO_F$-dense in $\cX_x$ (that is, each of its fibers is Zariski dense in the corresponding fiber of $\cX_x$ - see Section 2.5 of \cite{BLR}), and hence $\cU_x^- \times (\cU_x^+\times \cT\times  \cU_x^-)_f \times \cT\cU_x^+$ is $\fO_F$-dense in $\cU_x^- \times (\cT\times \cU_x^+\times  \cU_x^-) \times \cT\cU_x^+=\cX_x \times \cX_x$. Hence we obtain an $\fO_F$-rational map $m:\cX_x \times \cX_x \rightarrow \cX_x$. By Proposition 5.16 of \cite{La00}, $m$ is an $\fO_F$-birational group law on $\cX_x$. Glue together the schemes $G$ and $\cX_x$ along $\cX_x \times_{\fO_F} F$ and denote it as $\cY_x$. As in Proposition 5.17 of \cite{La00}, the parahoric group scheme $\cP_x$ with group law $\bar{m}$, together with an open immersion $\cY_x \rightarrow \cP_x$ such that the restriction of $\bar{m}$ to $\cY_x$ is $m$, is obtained by applying Theorem 5.1 of \cite{BLR} to the scheme $\cY_x$. The generic fiber of $\cP_x$ is $G$. Let $\cF$ be a facet in $\cA(S,F)$. Then for $x, y \in \cF$, $P_x = P_y$. So we write $P_\cF$ for the parahoric subgroup attached to the facet $\cF$ and denote the underlying group scheme as $\cP_{\cF}$.
\subsection{Parahoric group schemes; \'{E}tale descent}\label{etaledescent}
Let $F$ be a non-archimedean local field and $\Fsh$ be the completion of the maximal unramified extension $\Fu (\subset F_s)$ of $F$. Let $G$ be a connected reductive group over $F$. 
By a theorem of Steinberg (recalled as Theorem \ref{SVT}), we know that $G_{\Fu}$ is quasi-split. 
Let $A$ be a maximal $F$-split torus in $G$. By Section 5 of \cite{BT2}, there is an $F$-torus $S$ that contains $A$ and is maximal $\Fu$-split. Note that $X_*(A) = X_*(S)^{\Gal(\Fsh/F)}$. 
Let $\cA(A,F)$ denote the apartment of $G$ with respect to $A$. Let $\cF_*$ be a facet in $\cA(A,F)$. We fix an algebraic closure $\bar\kappa_F$ of the residue field $\kappa_F$ and identify the Galois groups $\Gal(\Fsh/F)$ with $\Gal(\bar\kappa_F/\kappa_F)$. 
Let $\sigma$ denote the Frobenius element of $\Gal(\Fu/F)$ under this identification. Then we know that there is a $\sigma$-stable facet $\tilde\cF_*$ in $\cA(S, \Fu)$ such that $\tilde\cF_*^\sigma = \cF_*$ (see Chapter 5 of \cite{BT2}). 
Since $\tilde\cF_*$ is stable under the action of $\sigma$, the parahoric group scheme $\cP_{\tilde\cF_*}$ is also stable under the action of $\sigma$.
 In this case, the $\fO_{\Fsh}$-group scheme $\cP_{\tilde\cF_*}$ admits a unique descent to an $\fO_F$-group scheme with generic fiber $G$ (see Example B, Section 6.2, \cite{BLR}). 
 The affine ring of this group scheme is $\left(\fO_{\Fsh}[\cP_{\tilde\cF_*}]\right)^{\Gal(\Fsh/F)}$. This is the parahoric group scheme attached to the facet $\cF_*$ of $\cA(A,F)$. 

\section{Quasi-split forms over close local fields} 
Let $G_0$ be a split connected reductive group defined over $\bbZ$ with root datum $(R,\Delta)$.  For an extension $K/F$, let $G_{0,K}:= G_0 \times_\bbZ K$. 

Let $E(F, G_{0})$ be the of $F$-isomorphism classes of connected reductive $F$-algebraic groups $G$ with $G_{F_s}$ isomorphic to $G_{0,F_s}$. This is  in natural bijection with the Galois cohomology set $H^1(\Gamma_F, \Aut(G_{0,F_s}))$. We denote this map \begin{align}\label{FormsTheorem}
E(F, G_{0}) \rightarrow H^1(\Gamma_F, \Aut(G_{0,F_s})), [G] \rightarrow s_G.
\end{align}

\begin{lemma}\label{atmostmramified}
 Let $I_F$ be the inertia group of $F$ and $I_F^{m}$ denote the $m$-th higher ramification subgroup with upper numbering. Let $E(F, G_0)_m$ denote the set of $F$-isomorphism classes of $F$-forms  $G$ of $G_{0,F}$ such that there exists an atmost $m$-ramified finite extension $L \subset F_s$ (i.e. $\Gal(L/F)^m = 1$) with $G\times_F L \cong G_0 \times_\bbZ L$.  The bijection \eqref{FormsTheorem} induces a bijection between $E(F, G_0)_m$ and the cohomology set $H^1(\Gamma_F/I_F^m, (\Aut_{F_s}(G_{0,F_s}))^{I_F^m}).$ 
\end{lemma}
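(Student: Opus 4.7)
The strategy is to combine two ingredients: a cohomological characterization of which forms lie in $E(F,G_0)_m$, and the non-abelian inflation-restriction sequence for the normal subgroup $I_F^m \triangleleft \Gamma_F$. Together these will identify $H^1(\Gamma_F/I_F^m, \Aut_{F_s}(G_{0,F_s})^{I_F^m})$ (via inflation) with precisely the cohomology classes corresponding to the at most $m$-ramified forms, producing the claimed bijection by restricting the bijection \eqref{FormsTheorem}.

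For the cohomological characterization, I would use that any $F_s$-isomorphism $\phi: G_{F_s} \xrightarrow{\sim} G_{0,F_s}$ is defined over some finite Galois extension $E/F$ (both groups being of finite type), so the representing cocycle $s_G(\gamma) = \phi \circ \gamma(\phi)^{-1}$ factors through $\Gal(E/F)$. By Herbrand's theorem, for any finite Galois $L/F$ one has $\Gal(L/F)^m = 1 \Leftrightarrow I_F^m \subseteq \Gal(F_s/L)$. Combining these, $[G] \in E(F,G_0)_m$ iff $[s_G]$ restricts trivially to $H^1(I_F^m, \Aut_{F_s}(G_{0,F_s}))$: the forward implication is immediate from the splitting of $G$ over $L$, and conversely if $[s_G]|_{I_F^m}$ is trivial I would replace $\phi$ by $\phi \circ a$ for an appropriate $a \in \Aut_{F_s}(G_{0,F_s})$ (itself defined over a finite extension) so that the twisted cocycle is literally trivial on $I_F^m$ while still factoring through some finite $\Gal(L/F)$. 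Passing to the quotient $\Gal(L/F)/\Gal(L/F)^m$, which is Galois and has trivial $m$-th ramification group by compatibility of upper numbering with quotients, then exhibits $G$ as split over an at most $m$-ramified extension of $F$.

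Finally, the non-abelian inflation-restriction sequence of pointed sets
\[ 1 \to H^1(\Gamma_F/I_F^m, \Aut_{F_s}(G_{0,F_s})^{I_F^m}) \xrightarrow{\mathrm{inf}} H^1(\Gamma_F, \Aut_{F_s}(G_{0,F_s})) \xrightarrow{\mathrm{res}} H^1(I_F^m, \Aut_{F_s}(G_{0,F_s})) \]
is exact, so inflation is injective with image equal to the kernel of restriction; combined with the previous paragraph this kernel coincides with the image of $E(F,G_0)_m$ under \eqref{FormsTheorem}, which finishes the proof. A small sanity check is that a cocycle trivial on $I_F^m$ automatically takes values in $\Aut_{F_s}(G_{0,F_s})^{I_F^m}$: for any $n \in I_F^m$ and $\gamma \in \Gamma_F$, normality gives $\gamma n \gamma^{-1} \in I_F^m$, and the cocycle identity applied to $\gamma n = (\gamma n \gamma^{-1})\gamma$ forces $s(\gamma) = (\gamma n \gamma^{-1})(s(\gamma))$, so $s(\gamma)$ is $I_F^m$-fixed. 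I do not anticipate any serious obstacle; the only points of care are the non-abelian bookkeeping in the inflation-restriction sequence, and the appeal to Herbrand's theorem to translate between ``$L/F$ is at most $m$-ramified'' and ``$I_F^m$ acts trivially on $L$''.
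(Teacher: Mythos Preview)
Your argument is correct and is essentially the same as the paper's, just unpacked in more detail. The paper introduces the fixed field $\Omega=(F_s)^{I_F^m}$, invokes the Herbrand-type fact that a finite $L/F$ embeds in $\Omega$ iff $\Gal(L/F)^m=1$, and then cites directly that $H^1(\Aut(\Omega/F),\Aut_\Omega(G_{0,\Omega}))$ classifies the $F$-forms splitting over $\Omega$, together with the identifications $\Aut(\Omega/F)\cong\Gamma_F/I_F^m$ and $\Aut_\Omega(G_{0,\Omega})=(\Aut_{F_s}(G_{0,F_s}))^{I_F^m}$; your inflation--restriction computation is precisely what underlies that classification statement.
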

\begin{proof} Let $\Omega := (F_s)^{I_F^m}$. Then for every finite extension $F \subset L \subset F_s$, $L \hookrightarrow \Omega$ if and only if $\Gal(L/F)^m =1$ (see section 3.5 of \cite{Del84}). Further we know that $H^1(\Aut(\Omega/F), \Aut_\Omega(G_{0,\Omega}))$ classifies isomorphism classes of $F$-forms $[G]$ with $G \times_F \Omega \cong G_{0,F} \times_F \Omega$. Now simply note that $\Aut(\Omega/F) \cong \Gamma_F/I_F^m$ and $\Aut_\Omega(G_{0,\Omega}) =  (\Aut_{F_s}(G_{0,F_s}))^{I_F^m}$. 
\end{proof}
\subsection{Quasi-split forms} Let $(G_0, T_0, B_0, \{u_\alpha\}_{\alpha \in \tilde\Delta})$ be a pinned, split, connected, reductive $\bbZ$-group with based root datum $(R, \tilde\Delta)$ where $\{u_\alpha\}_{\alpha \in \tilde\Delta}$ is a splitting as in Section 3.2.2 of \cite{BT2}. Then $\Out(G_0)$ can be identified with the constant $\bbZ$-group scheme associated to the group $\Aut(R, \tilde\Delta)$. Consider the exact sequence
\[1 \rightarrow \Inn(G_0(F_s)) \rightarrow \Aut(G_{0, F_s}) \rightarrow \Aut(R, \tilde\Delta) \rightarrow 1.\]
Let $H = H(G_0, T_0, B_0, \{u_\alpha\}_{\alpha \in \tilde\Delta})$ be the subgroup of $\Aut(G_{0,F_s})$ consisting of all $a$ such that $a(B_0) = B_0$, $a(T_0) = T_0$ and $\{a \circ u_\alpha \;|\; \alpha \in \tilde\Delta\} = \{u_\alpha \;|\; \alpha \in \tilde\Delta\}$. Then $H \hookrightarrow \Aut(G_{0,F_s}) \rightarrow \Aut(R, \tilde\Delta)$ is an isomorphism and $\Aut(G_{0,F_s}) \cong H \ltimes \Inn(G_0(F_s))$. Hence the natural map  $ H^1(\Gamma_F, \Aut(G_{0,F_s})) \rightarrow H^1(\Gamma_F, \Aut(R, \tilde\Delta))$ has a section given by 
\[q: H^1(\Gamma_F, \Aut(R,\tilde\Delta)) \xrightarrow{\cong} H^1(\Gamma_F, H) \rightarrow H^1(\Gamma_F, \Aut(G_{0,F_s})).\]
We now recall the following well-known theorem (see \cite{Conrad}, Section 7.2). 
\begin{theorem}\label{qsforms} Let $[G] \in E(F, G_0)$. Then $s_G$ lies in the image of $q: H^1(\Gamma_F, \Aut(R,\tilde\Delta)) \rightarrow H^1(\Gamma_F, \Aut(G_{0,F_s}))$ if and only if $G$ is quasi-split over $F$, that is, it has a Borel subgroup defined over $F$. 
\end{theorem}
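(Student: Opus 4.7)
The proof splits into the two implications of the biconditional.

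For the direction $(\Leftarrow)$, the plan is to use the image of $q$ to construct an $F$-Borel in $G$ directly. Assume $s_G = q(\bar c)$ for some class $\bar c \in H^1(\Gamma_F, \Aut(R,\tilde\Delta))$. By definition of $q$, we may pick a cocycle $c \in Z^1(\Gamma_F, \Aut(G_{0,F_s}))$ representing $s_G$ whose values all lie in the subgroup $H$, via the splitting $\Aut(R,\tilde\Delta) \cong H$. Every element of $H$ stabilizes $B_0$ and $T_0$ by definition, so the twisted Galois action $\sigma \mapsto c(\sigma)\circ\sigma$ on $G_{0,F_s}$ that defines the descent $G = {}_c G_{0,F_s}$ leaves $B_0 \subset G_{0,F_s}$ and $T_0 \subset G_{0,F_s}$ stable. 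By Galois descent (applicable since $B_0$ and $T_0$ are smooth closed subgroups stable under the descent datum), they descend to closed $F$-subgroups of $G$, which base-change to a Borel and maximal torus of $G_{F_s}$; hence $G$ is quasi-split.

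For the converse $(\Rightarrow)$, the plan is to produce a cocycle representative of $s_G$ taking values in $H$, by exhibiting an $F$-rational pinning on $G$. Assume $G$ is quasi-split. Choose an $F$-Borel $B \subset G$ and an $F$-maximal torus $T \subset B$ (the latter exists by conjugacy of maximal tori of $B$ and the fact that $B$ has an $F$-rational maximal torus). The based root datum $(R(G,T), \tilde\Delta(G,T,B))$ is abstractly isomorphic to $(R,\tilde\Delta)$, and the $\Gamma_F$-action on $G(F_s)$ preserves $T(F_s)$ and $B(F_s)$, hence acts on the based root datum $(R(G,T),\tilde\Delta(G,T,B))$. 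The first key step is to exhibit a pinning $\{v_\alpha\}_{\alpha \in \tilde\Delta(G,T,B)}$ of $G_{F_s}$ satisfying the Galois-equivariance condition $\sigma(v_\alpha) = v_{\sigma(\alpha)}$ for all $\sigma \in \Gamma_F$. Starting from any $F_s$-pinning $\{v_\alpha^0\}$, one writes $\sigma(v_\alpha^0) = \lambda(\sigma,\alpha) v_{\sigma(\alpha)}^0$ with $\lambda(\sigma,\alpha) \in F_s^\times$, which packages into a 1-cocycle valued in the $\Gamma_F$-module $\prod_{\alpha \in \tilde\Delta(G,T,B)} F_s^\times$; this module decomposes according to $\Gamma_F$-orbits into a product of induced modules $\mathrm{Ind}_{\Gamma_{L_\alpha}}^{\Gamma_F} F_s^\times$, and Shapiro's lemma together with Hilbert 90 forces the class to be trivial, allowing one to rescale the $v_\alpha^0$ to a Galois-stable pinning $\{v_\alpha\}$.

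Once the $F$-pinning is in hand, I choose an identification $(R,\tilde\Delta) \cong (R(G,T),\tilde\Delta(G,T,B))$ and an $F_s$-isomorphism $\phi: G_{0,F_s} \to G_{F_s}$ mapping $(T_0,B_0,\{u_\alpha\})$ to $(T_{F_s}, B_{F_s},\{v_\alpha\})$. The resulting cocycle $c(\sigma) := \phi^{-1}\circ \sigma(\phi) \in \Aut(G_{0,F_s})$ represents $s_G$ by definition of \eqref{FormsTheorem}. By $F$-rationality of $T$ and $B$ and by the Galois-equivariance of the pinning, $c(\sigma)$ preserves $T_0$, $B_0$, and the set $\{u_\alpha\}$, so $c(\sigma) \in H$ for all $\sigma$. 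Consequently $s_G$ lies in the image of $q$.

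The main obstacle is the existence of the Galois-stable pinning for a quasi-split group; this is the nontrivial input and is where the quasi-split hypothesis is genuinely used. The rest of the argument is essentially formal manipulation of the split exact sequence $1 \to \Inn(G_0(F_s)) \to \Aut(G_{0,F_s}) \to \Aut(R,\tilde\Delta) \to 1$ together with Galois descent for smooth subgroups.
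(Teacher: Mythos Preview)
The paper does not actually prove this theorem: it is stated as a well-known result and attributed to \cite{Conrad}, Section~7.2, with no argument given. Your proposal supplies the standard proof that one finds in that reference (descent of $(B_0,T_0)$ when the cocycle takes values in $H$, and construction of a Galois-stable pinning via Shapiro plus Hilbert~90 in the other direction), and the argument is correct.

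One cosmetic remark: your labels $(\Leftarrow)$ and $(\Rightarrow)$ are swapped relative to the statement as written. The theorem reads ``$s_G$ lies in the image of $q$ \emph{if and only if} $G$ is quasi-split,'' so the implication ``$s_G$ in the image of $q$ $\Rightarrow$ $G$ quasi-split'' is the $(\Rightarrow)$ direction, not $(\Leftarrow)$. This does not affect the mathematics.
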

Let $E_{qs}(F, G_0):=\{ [G] \in E(F,G_0)\;|\; s_G \in Im(q)\}$ and $E_{qs}(F, G_0)_m = E_{qs}(F, G_0) \cap E(F, G_0)_m$. Since $G_0$ is $F$-split, the action of $\Gamma_F$ on $(G_0, B_0, T_0)$ is trivial. Hence 
$Z^1(\Gamma_F, \Aut(R,\tilde\Delta)) = \Hom(\Gamma_F, \Aut(R,\tilde\Delta))$.
\begin{lemma}\label{QSF} We have the following:
\begin{enumerate}[leftmargin = *]
\item The class $[G] \in E_{qs}(F, G_0)_m$ if and only if $s_G $ lies in the image of \[q: H^1(\Gamma_F/I_F^m, \Aut(R,\tilde\Delta)^{I_F^m}) \rightarrow H^1(\Gamma_F/I_F^m, \Aut(G_{0,F_s})^{I_F^m})\]
\item The isomorphism $\psi_m: \Tr_m(F) \xrightarrow{\cong} \Tr_m(F')$ induces an isomorphism
\[ \fQ_m: H^1(\Gamma_F/I_F^m, \Aut(R,\tilde\Delta)) \xrightarrow{\cong} H^1(\Gamma_{F'}/I_{F'}^m, \Aut(R,\tilde\Delta))\]
and
\[ \fQ_m^c: Z^1(\Gamma_F/I_F^m, \Aut(R,\tilde\Delta)) \xrightarrow{\cong} Z^1(\Gamma_{F'}/I_{F'}^m, \Aut(R,\tilde\Delta))\]
\item The isomorphism $\psi_m$ induces a bijection $E_{qs}(F, G_0)_m \rightarrow E_{qs}(F', G_0)_m, [G] \rightarrow [G']$, where $s_{G'}  = q' \circ \fQ_m(s_G)$.
\end{enumerate}
\end{lemma}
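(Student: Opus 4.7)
\emph{Proof plan.} The three parts are essentially formal consequences of Deligne's theory of close local fields combined with the cohomological classification of quasi-split forms recalled above, and I would carry them out in the order (a), (b), (c), each using the previous. For (a), the key observation is that $\Aut(R,\tilde\Delta)$ is a constant $\bbZ$-group scheme, so the Galois action on it (and on its isomorphic image $H \subset \Aut(G_{0,F_s})$, since the pinning $\{u_\alpha\}$ is defined over $\bbZ$) is trivial. In particular $\Aut(R,\tilde\Delta)^{I_F^m} = \Aut(R,\tilde\Delta)$, and the section $q$ commutes with inflation from $\Gamma_F/I_F^m$-cohomology with $I_F^m$-invariant coefficients. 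Combining Lemma \ref{atmostmramified} (which identifies $E(F, G_0)_m$ with the inflation image $H^1(\Gamma_F/I_F^m, \Aut_{F_s}(G_{0,F_s})^{I_F^m})$) with Theorem \ref{qsforms} (which characterises quasi-split forms as those with $s_G \in \mathrm{Im}(q)$) then yields the equivalence of (a).

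For (b), triviality of the action yields $Z^1(\Gamma_F/I_F^m, \Aut(R,\tilde\Delta)) = \Hom(\Gamma_F/I_F^m, \Aut(R,\tilde\Delta))$, while $H^1$ is the quotient by conjugation in $\Aut(R,\tilde\Delta)$. Deligne's theorem recalled in Section \ref{Deligne} produces from $\psi_m$ an isomorphism $\Del_m: \Gamma_F/I_F^m \xrightarrow{\sim} \Gamma_{F'}/I_{F'}^m$, canonical up to inner automorphism; fix a representative, and define $\fQ_m^c$ by precomposition with $\Del_m^{-1}$. The induced map $\fQ_m$ on $H^1$ does not depend on the chosen representative, since an inner automorphism of the source conjugates any given homomorphism to another lying in the same $\Aut(R,\tilde\Delta)$-conjugacy class.

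For (c), the $\Gamma_F/I_F^m$-level section $q: H^1(\Gamma_F/I_F^m, \Aut(R,\tilde\Delta)) \rightarrow H^1(\Gamma_F/I_F^m, \Aut_{F_s}(G_{0,F_s})^{I_F^m})$ is injective (any section of a pointed-set map is), so by (a) each $[G] \in E_{qs}(F, G_0)_m$ determines a unique $c_G$ with $q(c_G) = s_G$. Setting $s_{G'} := q'(\fQ_m(c_G))$ and reapplying (a) over $F'$ produces a unique $[G'] \in E_{qs}(F', G_0)_m$; exchanging the roles of $F$ and $F'$ using $\psi_m^{-1}$ and $\fQ_m^{-1}$ provides a two-sided inverse. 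The whole argument is essentially bookkeeping; the only mild subtlety worth flagging is the $Z^1$ versus $H^1$ distinction in (b), since later constructions (e.g.\ the compatible data $(G_q, T_q, B_q)$ mentioned in the introduction) will require the cocycle-level map $\fQ_m^c$ and hence an implicit choice of $\Del_m$.
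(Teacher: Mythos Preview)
Your proposal is correct and follows exactly the route the paper takes: the paper's proof is the single sentence ``This is clear from Lemma~\ref{atmostmramified} and Theorem~\ref{qsforms},'' and what you have written is precisely the unpacking of that sentence, including the observation (stated just before the lemma in the paper) that $Z^1(\Gamma_F,\Aut(R,\tilde\Delta))=\Hom(\Gamma_F,\Aut(R,\tilde\Delta))$ because the action is trivial. Your remark on the $Z^1$ versus $H^1$ distinction and the implicit choice of $\Del_m$ is also consistent with how the paper proceeds afterwards.
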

\begin{proof} This is clear from Lemma \ref{atmostmramified} and Theorem \ref{qsforms}. 
\end{proof}

As noted in Lemma \ref{QSF}, $Z^1(\Gal(\Omega/F), \Aut(R,\tilde\Delta)) = Hom(\Gal(\Omega/F), \Aut(R,\tilde\Delta))$ since $G_0$ is split. Let us fix $s \in Z^1(\Gal(\Omega/F), \Aut(R, \tilde\Delta)) \cong Z^1(\Gal(\Omega/F), H)$.
 Let $(G, \phi)$ be a pair of be a quasi-split connected reductive group over $F$ and $\phi: G_0\times_{\bbZ} \Omega \rightarrow G \times_F \Omega$ an $\Omega$-isomorphism such that the Galois action on $G(F_s)$ is given by $s$. We may and do assume that there is a finite Galois atmost $m$-ramified extension $K$ of $F$ over which $\phi$ is defined, that is, that $s \in Z^1(\Gal(K/F),\Aut(R, \tilde\Delta))$.

    More precisely, with $*_F$ denoting the Galois action on $G(K)$, we have \[\gamma*_F \phi(x) = \phi(s(\gamma)(\gamma\cdot x))\] for $\gamma \in \Gal(K/F)$ and $x \in G_0(K)$. 
 Then $\phi(T_0) = T$ is a maximal torus of $G$ defined over $F$ and $\phi(B_0) = B$ is a Borel subgroup of $G$ containing $T$ and defined over $F$. Let $s' \in Z^1(\Gal(K'/F'), \Aut(R, \tilde\Delta)) $ as in Lemma \ref{QSF}. Here $K'/F'$ is determined by $K/F$ via $\Del_m$. 
 Let $(G', \phi')$ be a pair of quasi-split connected reductive group over $F'$ and $\phi': G_0 \times_\bbZ K' \rightarrow   G'\times_{F'} K'$ such that $\gamma'*_{F'} \phi'(x') = \phi(s'(\gamma')(\gamma'\cdot x'))$, where $\gamma' =\Del_m(\gamma)$. 
 Then $\phi'(T_0) = T'$ and $\phi'(B_0) = B'$ are defined over $F'$. 
Note that $X_*(T) \cong X_*(T_0) \cong X_*(T')$ and $X^*(T) \cong X^*(T_0) \cong X^*(T')$ via $\phi$ and $\phi'$. 

Recall the notation of Chai-Yu: 
$(\fO_F, \fO_K, \Gamma_{K/F}) \equiv_{\psi_m, \gamma} (\fO_{F'}, \fO_{K'}, \Gamma_{K'/F'}) \;(level\;\; m)$ from Section \ref{CY}. 

We write \[(\fO_F, \fO_K, \Gamma_{K/F}, H) \equiv_{\psi_m,\gamma, \fQ_m^c} (\fO_{F'}, \fO'_{K'}, \Gamma_{K'/F'}, H') \;(level\;\; m)\] to mean $(\fO_F, \fO_K, \Gamma_{K/F}) \equiv_{\alpha,\beta} (\fO_{F'}, \fO'_{K'}, \Gamma_{K'/F'}) \;(level\;\; m)$, $H$ and $H'$ arise from the same $\bbZ$-pinned group $(G_0, B_0, T_0, \{u_\alpha\}_{\alpha \in \tilde\Delta})$, and the $F$-quasi-split data $(G, B, T)$ with cocycle $s$ corresponds to the $F'$-quasi-split data $(G', B', T')$ with cocycle $s'$ via $\fQ_m^c$ as in Lemma \ref{QSF} $(b)$ (but applied to $K$ and $K'$ respectively). 
To abbreviate notation we will write \textit{congruence data $D_m$} to mean
\[D_m: (\fO, \fO_K, \Gamma_{K/F}, H) \equiv_{\psi_m, \gamma, \fQ_m^c} (\fO_{F'}, \fO'_{K'}, \Gamma_{K'/F'}, H') \;(level\;\; m).\] 
\begin{lemma}\label{charQSCLF}
 The congruence data $D_m$ induces isomorphisms $X^*(T)^{\Gal(\Omega/F)} \cong X^*(T')^{\Gal(\Omega'/F')}$, $X_*(T)^{\Gal(\Omega/F)} \cong X_*(T')^{\Gal(\Omega'/F')}$, $X^*(T)_{\Gal(\Omega/F)} \cong X^*(T')_{\Gal(\Omega'/F')}$, and  $X_*(T)_{\Gal(\Omega/F)} \cong X_*(T')_{\Gal(\Omega'/F')}$.
 \end{lemma}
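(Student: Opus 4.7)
The plan is to transport both Galois actions back to the pinned torus $T_0$, check that under $\Del_m$ the transported actions correspond (because $s' = \fQ_m^c(s)$), and then read off what happens on fixed points and coinvariants. Since $\phi : G_{0,K} \to G_K$ sends $T_0$ to $T$, pullback yields a $\bbZ$-module isomorphism $\phi^* : X^*(T) \xrightarrow{\cong} X^*(T_0)$, and analogously for $X_*$ and for the primed objects. Although $\phi$ is only defined over $K$, the defining relation $\gamma *_F \phi(x) = \phi(s(\gamma)(\gamma \cdot x))$ governs how the $F$-rational Galois action on $G$ translates back through $\phi$ to $G_0$. Using that $\Gal(K/F)$ acts trivially on $X^*(T_0)$ (because $G_0$ is defined over $\bbZ$), a direct computation shows that $\phi^*$ carries the natural action of $\Gal(K/F)$ on $X^*(T)$ to the twisted action $\gamma \cdot_s \chi := s(\gamma) \cdot \chi$ on $X^*(T_0)$, where $s(\gamma) \in H \cong \Aut(R,\tilde\Delta)$ acts on the character lattice of $T_0$ through its tautological action on the based root datum. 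The identical story holds for $(\phi')^*$ and the cocycle $s'$ on $X^*(T_0) = X^*(T'_0)$.

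Now invoke the congruence data $D_m$. By Lemma \ref{QSF}(b), $s' = \fQ_m^c(s)$, where $\fQ_m^c$ is induced by the Deligne isomorphism $\Del_m : \Gamma_F/I_F^m \to \Gamma_{F'}/I_{F'}^m$. Moreover, the extension $K/F$ and its image $K'/F'$ under Deligne's equivalence of categories $ext(F)^m \to ext(F')^m$ are both at most $m$-ramified, so $\Del_m$ descends to an isomorphism $\gamma : \Gal(K/F) \xrightarrow{\cong} \Gal(K'/F')$ intertwining $s$ with $s'$. Consequently, under the composite identification
\[X^*(T) \xrightarrow{\phi^*} X^*(T_0) \xleftarrow{(\phi')^*} X^*(T'),\]
the $s$-twisted $\Gal(K/F)$-action matches the $s'$-twisted $\Gal(K'/F')$-action. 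Taking fixed points and coinvariants of this identification then yields
\[X^*(T)^{\Gal(\Omega/F)} \cong X^*(T')^{\Gal(\Omega'/F')}, \qquad X^*(T)_{\Gal(\Omega/F)} \cong X^*(T')_{\Gal(\Omega'/F')},\]
and the corresponding isomorphisms for $X_*$ follow by the identical argument with $\phi_*$ in place of $\phi^*$ (equivalently, by dualizing).

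The only real subtlety is verifying the twist formula $\phi^*(\gamma \cdot \chi) = s(\gamma) \cdot \phi^*(\chi)$ with the correct sign and convention: one must be careful that an element of $H \subset \Aut(G_{0,F_s})$ acts on $X^*(T_0)$ via the induced automorphism of $T_0$, and that the defining relation $\gamma *_F \phi = \phi \circ s(\gamma) \circ \gamma$ (with $\gamma$ acting on $K$-points through its natural action on $G_0(K)$) produces the asserted twisted action. Once this bookkeeping is in place the proof is a formal consequence of Lemma \ref{QSF} and Deligne's theory, so no genuine obstacle is expected.
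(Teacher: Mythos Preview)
Your argument is correct and is essentially the same as the paper's: both transport the Galois action on $X_*(T)$ (resp.\ $X^*(T)$) back to $X_*(T_0)$ via $\phi$, observe that the transported action is given purely by the cocycle $s$ (since $G_0$ is split over $\bbZ$), and then use $s'=\fQ_m^c(s)$ together with $\Del_m$ to match invariants and coinvariants. The paper phrases this via $\gamma *_F(\phi\circ\lambda)=\phi\circ(s(\gamma)(\lambda))$ on cocharacters, which is exactly your ``twist formula'' in dual form.
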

 \begin{proof} We know that $\gamma *_F(\phi(x)) = \phi(s(\gamma)(\gamma\cdot x))$ where $s(\gamma) = \phi^{-1} \circ \gamma(\phi)$ takes values in $H = H(G_0, T_0, B_0, \{u_\alpha\}_{\alpha \in \tilde\Delta})$. We similarly have $*_{F'}$. This action induces the action on $X_*(T)$ as follows: \[\gamma *_F(\phi \circ \lambda) = \phi \circ (s(\gamma)(\lambda))\] where $\gamma \in \Gal(\Omega/F)$ and $\lambda \in X_*(T_0)$, where we now view $s(\gamma)$ as an element of $\Aut(R, \tilde\Delta))$.  By definition $s(\gamma)(\lambda) = s'(\gamma')(\lambda)$ where $\gamma' = \Del_m(\gamma)$. Hence $\gamma'*_{F'}(\phi' \circ \lambda) = \phi' \circ s'(\gamma')(\lambda)$. Now, $X_*(T)^{\Gal(\Omega/F)} = \{\phi \circ \lambda \;|\; s(\gamma)(\lambda) = \lambda\}$. The lemma is now clear. 
 \end{proof}
 \section{Congruences of parahoric group schemes; Quasi-split descent}\label{QS}
\subsection{Apartment over close local fields} In this section, we additionally assume that $F$ is strictly Henselian.
We begin with the following lemma.
\begin{lemma}\label{maxFsplit}
Let $T$ as above and let $S$ be the maximal split subtorus of $T$. Then $S$ is maximal $F$-split and $Z_G(S) = T$. 
\end{lemma}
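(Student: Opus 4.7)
The plan is to first compute $Z_G(S)$ and then use this computation to establish that $S$ is maximal $F$-split. The inclusion $T\subseteq Z_G(S)$ is immediate since $T$ is commutative and $S\subseteq T$. For the reverse inclusion I would invoke the standard structure theorem: $Z_G(S)$ is a connected reductive $F$-subgroup of $G$ containing the maximal torus $T$, and its root system with respect to $T$ is exactly $\{\alpha\in\Phi(G,T):\alpha|_S=0\}$. Hence the assertion $Z_G(S)=T$ reduces to showing that no absolute root vanishes on $S$.

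This nonvanishing is the main step, and I would prove it using the quasi-split structure recalled in Section \ref{CSsystem}. Since $B\supseteq T$ is $F$-rational, $\Gamma_F$ preserves the base $\tilde\Delta\subseteq\Phi(G,T)$, the restriction map to $S$ is constant on each $\Gamma_F$-orbit of $\tilde\Delta$, and the set $\Delta$ of these restrictions is a basis of $\Phi(G,S)$, hence linearly independent in $X^*(S)\otimes\bbQ$. Writing an arbitrary $\alpha\in\Phi(G,T)$ as $\alpha=\sum_{\beta\in\tilde\Delta}c_\beta\beta$, all coefficients $c_\beta$ share a common sign (because $\alpha$ is either positive or negative with respect to $\tilde\Delta$), and
\[
\alpha|_S\;=\;\sum_{O}\Bigl(\sum_{\beta\in O}c_\beta\Bigr)\bar a_O,
\]
where $O$ ranges over the $\Gamma_F$-orbits in $\tilde\Delta$ and $\bar a_O\in\Delta$ is the common restriction. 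Linear independence of $\Delta$ forces $\sum_{\beta\in O}c_\beta=0$ for every orbit $O$, and the same-sign condition then forces every $c_\beta$ to vanish, contradicting $\alpha\neq 0$. Thus $Z_G(S)=T$.

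For the maximality assertion, let $S''\supseteq S$ be any $F$-split subtorus of $G$. Then $Z_G(S'')\subseteq Z_G(S)=T$, so in particular $S''\subseteq T$. But by hypothesis $S$ is the maximal $F$-split subtorus of $T$, hence $S''\subseteq S$. This proves $S$ is maximal $F$-split in $G$.

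I expect the only nontrivial point to be the nonvanishing of root restrictions, and the positivity argument above handles it cleanly using the Galois stability of $\tilde\Delta$ (which in turn is exactly what the quasi-split hypothesis supplies). Everything else is formal; notably the strict Henselianity of $F$ assumed at the start of Section \ref{QS} is not actually needed for this lemma, only the existence of an $F$-Borel containing $T$.
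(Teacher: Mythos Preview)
Your proof is correct but proceeds in the opposite order from the paper's. The paper first shows $S$ is maximal $F$-split: it embeds $S$ in a maximal $F$-split torus $\tilde S$, notes that $\tilde T:=Z_G(\tilde S)$ is a maximal torus lying in some $F$-Borel $\tilde B$, conjugates $\tilde B$ to $B$ (hence $\tilde T$ to $T$) by an element of $G(F)$, and uses that $G(F)$-conjugation preserves split components to force $S=\tilde S$; the equality $Z_G(S)=T$ then drops out. You instead compute $Z_G(S)=T$ directly by a root-theoretic argument and deduce maximality from the inclusion $S''\subseteq Z_G(S'')\subseteq Z_G(S)=T$. The paper's route is shorter and rests only on conjugacy of $F$-Borels; yours is more self-contained and makes explicit the combinatorial fact that no absolute root vanishes on $S$. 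One caution: citing Section~\ref{CSsystem} for ``$\Delta$ is a basis of $\Phi(G,S)$'' is mildly circular, since that section already takes $S$ to be maximal $F$-split; however you only need the linear independence of the orbit restrictions $\bar a_O$, and this follows directly from the linear independence of $\tilde\Delta$ together with the fact that the natural map $(X^*(T)\otimes\bbQ)^{\Gamma_F}\to(X^*(T)\otimes\bbQ)_{\Gamma_F}\cong X^*(S)\otimes\bbQ$ is an isomorphism (the action factoring through a finite quotient). So there is no circularity in substance.
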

\begin{proof}
Let $S \subset \tS$ with $\tS$ maximal $F$-split. Since $G$ is quasi-split over $F$, $\tT= Z_G(\tS)$ is a maximal torus in $G$ and we can assume that $\tT \subset \tB$, with $\tB$ defined over $F$. Then $B$ and $\tB$ are $G(F)$-conjugate,  which implies that $T$ and $\tT$ are $G(F)$-conjugate. But conjugation by an element of $G(F)$ will preserve the split and anisotropic components of $T$, which implies that $S$ and $\tS$ are $G(F)$-conjugate, which forces $S = \tS$ to be maximal $F$-split. It is now clear that $Z_G(S) =T$. 
\end{proof}
\begin{remark} The torus $S^{der} := S \cap G^{der}$ is a maximal $F$-split torus of $G^{der}$ contained in $T^{der} := T \cap G^{der}$. 
\end{remark}
\subsubsection{Compatibility of Chevalley-Steinberg systems} \label{CSCLF} Recall that we have fixed a $\bbZ$-pinning $\{u_\alpha\}_{\alpha \in \Delta}$ of $G_0$. This, via the Galois action given by the cocycles $s$ and $s'$, gives rise to a Steinberg splitting $\{x_\alpha\}_{\alpha \in \Delta}$ of $G$ and a Steinberg splitting $\{x_{\alpha'}'\}_{\alpha' \in \Delta'}$ of $G'$ respectively. Let $\Phi_m: \Phi(G, T) \xrightarrow{\cong} \Phi(G',T')$ (since both are isomorphic to $\Phi(G_0,T_0)$). This isomorphism is $\Del_m$-equivariant. Note that with $\gamma \in \Gal(\Omega/F)$ and $\gamma' = \Del_m(\gamma)$, we have that $x_{\gamma (\alpha)} = \gamma \circ x_\alpha \circ \gamma^{-1}$ and $x_{\gamma'(\alpha')}' = \gamma' \circ x_{\alpha'}' \circ \gamma'^{-1}$ where $\alpha' = \Phi_m(\alpha)$. The $\{x_\alpha\}_{\alpha \in \Delta}$ and $\{x'_{\alpha'}\}_{\alpha' \in \Delta'}$ each extend to Chevalley-Steinberg systems on $G$ and $G'$ respectively and continue to have the  compatibility with $\Del_m$ in the sense described above. 

We define
\[e_F := \begin{cases}e_{F/\bbQ_2} =\omega_F(2) & \text{ if } \Char(F)=0 \text{ and residue char}(F)=2 \\
\infty &  \text{ otherwise.}
\end{cases}\]
We prove the following refinement of Lemma 4.3.3 of \cite{BT2} when the residue characteristic of $F$ is 2, using the additional hypothesis that the extension $K/F$ splitting $G$ is atmost $m$-ramified. 

\begin{lemma}\label{reschar2} Let $m \geq 1$ and let $F$ be of residue characteristic 2 with $e_F \geq m$. Let $G,B,T$ as above, where $G$ splits over $K$ with $\Gal(K/F)^m=1$. Assume that $a,2a \in \Phi(G,S)$. Consider the separable quadratic extension $L_a/L_{2a}$ inside $K$. Let $e_a = e_{L_a/F}, e_{2a} = e_{L_{2a}/F}$.  There exists $t \in L_a$ with $L_a = L_{2a}[t]$ and the coefficients $A,B \in L_{2a}$ of the equation $t^2+A t+B =0$ satisfied by $t$ have the following properties.
\begin{enumerate}[leftmargin=*]
\item $\omega(B) = 0$ or $B$ is a uniformizer of $L_{2a}$.
\item $\omega(B) \leq \omega(A)< \frac{m}{2} +\frac{1}{e_a}$. 
\end{enumerate}
In particular $A \neq 0$. 
\end{lemma}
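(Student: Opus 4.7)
The plan is to split the argument according to whether $L_a/L_{2a}$ is unramified or ramified. In the unramified case $e_a = e_{2a}$, one chooses $t$ to be a unit whose reduction $\bar{t}$ generates the separable quadratic residue extension; in residue characteristic $2$, separability of $\bar{t}^2 + \bar{A}\bar{t} + \bar{B}$ forces $\bar{A} \neq 0$, so $\omega(A) = 0 = \omega(B)$ and both conclusions are immediate. In the substantive ramified case $e_a = 2 e_{2a}$, Lemma 4.3.3 of \cite{BT2} supplies a $t$ that is a uniformizer of $L_a$ with $B = t\tau(t)$ a uniformizer of $L_{2a}$ (where $\tau$ denotes the nontrivial element of $\Gal(L_a/L_{2a})$), giving $\omega(B) = 1/e_{2a} = 2/e_a$. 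The inequality $\omega(B) \leq \omega(A)$ is then automatic, since $A = -(t+\tau(t)) \in L_{2a}$ forces $\omega(A) \in (2/e_a)\bbZ \cup \{+\infty\}$ while $\omega(A) \geq \omega(t) = 1/e_a$. The new content of the lemma is the upper bound $\omega(A) < m/2 + 1/e_a$, which I deduce from the hypothesis $\Gal(K/F)^m = 1$.

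The key step is a ramification estimate. Since $f(x) = x^2 + Ax + B$ is the minimal polynomial of $t$, the different satisfies $\mathfrak{d}_{L_a/L_{2a}} = (f'(t)) = (2t+A)$, hence $\omega_{L_a}(\mathfrak{d}_{L_a/L_{2a}}) = \omega_{L_a}(2t+A)$. A ramified separable quadratic extension in residue characteristic $2$ is wildly ramified, so it has a unique ramification break $s \geq 1$ that coincides in upper and lower numbering (by Hasse--Arf, or by direct inspection for $\bbZ/2$-extensions), and $\omega_{L_a}(\mathfrak{d}_{L_a/L_{2a}}) = s+1$. Writing $G = \Gal(K/F)$ and $H = \Gal(K/L_{2a})$, the compatibility $H_u = H \cap G_u$ of lower numbering with subgroups, together with the tower formula $\phi_{K/F} = \phi_{L_{2a}/F} \circ \phi_{K/L_{2a}}$ for the Herbrand function (which makes sense for the possibly non-Galois extension $L_{2a}/F$ through its Galois closure), yields $H^v = H \cap G^{\phi_{L_{2a}/F}(v)}$. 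The hypothesis $G^m = 1$ therefore forces $H^v = 1$ for $v \geq \psi_{L_{2a}/F}(m)$, and passing to the quotient $\Gal(L_a/L_{2a})$ (which is compatible with upper numbering) gives $s < \psi_{L_{2a}/F}(m)$. The general bound $\psi'_{L/F} \leq e(L/F)$ (proven by direct differentiation of the Herbrand function, passing to the Galois closure if necessary) then yields $s+1 \leq \psi_{L_{2a}/F}(m) \leq m e_{2a}$.

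The hypothesis $e_F \geq m$ enters at the final step, precisely to rule out $\omega(A) \geq \omega(2t) = e_F + 1/e_a$: if this held, then $\omega_{L_a}(2t+A) \geq \omega_{L_a}(2t) = e_a e_F + 1$, forcing $s \geq e_a e_F$, which contradicts $s + 1 \leq m e_a/2 \leq e_F e_a/2$. Consequently $\omega(A) < \omega(2t)$, so $\omega_{L_a}(A) = \omega_{L_a}(2t+A) = s+1$, which gives $\omega(A) = (s+1)/e_a \leq m/2 < m/2 + 1/e_a$; in particular $A \neq 0$. The main obstacle is the ramification-theoretic bookkeeping in the middle paragraph: one must handle the Herbrand function of the non-Galois extension $L_{2a}/F$ cleanly, and carefully track how the vanishing of $G^m$ propagates through the tower $F \subset L_{2a} \subset L_a \subset K$ to control the unique break of the quadratic subextension.
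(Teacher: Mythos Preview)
Your argument is correct and matches the paper's approach: both push the hypothesis $\Gal(K/F)^m=1$ through the tower via the Herbrand function to obtain $\Gal(L_a/L_{2a})^{\psi_{L_{2a}/F}(m)}=1$, and both then bound the valuation of $\tau(t)-t$ (equivalently of $2t+A$, since $f'(t)=2t+A=-(\tau(t)-t)$ generates the different)---the paper citing Deligne's Lemma~A.6.1 where you instead invoke the break/different formula $\omega_{L_a}(\mathfrak d_{L_a/L_{2a}})=s+1$. One remark: in this section $F$ is strictly Henselian, so $L_a/L_{2a}$ is automatically (totally) ramified and your unramified case is superfluous, though harmless.
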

\begin{proof} By lemma 4.3.3 (ii) of \cite{BT2}, (a) holds, and $A=0$ or $\omega(B)\leq \omega(A) <\omega(2)$ or $0<\omega(B)\leq \omega(A) =\omega(2)$. Since $\Gal(K/F)^m = \Gal(K/F)_{\psi_{K/F}(m)} =1$ where $\psi_{K/F}$ denotes the inverse of the Herbrand function (See Chapter 4 of \cite{Ser79}), we have
\[\Gal(K/L_{2a})^{\psi_{L_{2a}/F}(m)} = \Gal(K/L_{2a})_{\psi_{K/F}(m)} = \Gal(K/L_{2a}) \cap \Gal(K/F)_{\psi_{K/F}(m)} =1.\]  
This implies that $\Gal(L_a/L_{2a})^{\psi_{L_{2a}/F}(m)} =1.$
 Using the equivalence of (ii) and (iv) of Lemma A.6.1 of \cite{Del84}, we see that
\begin{align}\label{inequality1}
\omega(\tau(t)-t)< \frac{\psi_{L_{2a}/F}(m)+1}{2e_{2a}} = \frac{\psi_{L_{2a}/F}(m)+1}{e_{a}}.
\end{align}

It is easy to see from the definition that $\psi_{L_{2a}/F}(m) \leq m \cdot e_{2a}$. 
Hence \[\omega(\tau(t) -t) < \frac{m}{2} +\frac{1}{e_a}.\] 

Now, $\omega(A) = \omega(\tau(t) +t) \geq \min(\omega(\tau(t)-t), \omega(2t))$,
and $\omega(2t) = \omega(2)+\omega(t) = e_F+\frac{1}{e_{a}}$. Since $e_F\geq m >m/2$, we see that
\begin{align}
\omega(A)= \min(\omega(\tau(t)-t), \omega(2t)) = \omega(\tau(t)-t) <\frac{m}{2} +\frac{1}{e_a}
\end{align}
and in particular, $A\neq 0$. 

Note that when the characteristic of $F$ is 2, the claim that $A \neq 0$ simply follows from the fact that the extension $L_a/L_{2a}$ is separable. 
\end{proof}
 
 \begin{proposition}\label{ACLF}
 Let $G$, $T$ and $B$ as in the preceding paragraph. Let $m \geq 1$ and let $F,F'$ be such that $e_F, e_{F'} \geq m$. The congruence data $D_m$ induces a simplicial isomorphism $\cA_m: \cA(S, F) \rightarrow \cA(S', F')$, where $(G',B', T')$ corresponds to the triple $(G, B, T)$ as above and $S$ (resp. $S'$) is the maximal split subtorus of $T$ (resp. $T')$ which is maximal $F$-split (resp. $F'$-split) by Lemma \ref{maxFsplit}. Furthermore, with $W^e$ as in Section \ref{ARS}, we also have a group isomorphism $W^{e} \cong W^{e'}$. 
\end{proposition}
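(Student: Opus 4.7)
The plan is to build $\cA_m$ from three compatible pieces: an identification of underlying translation lattices, a matching of relative root systems, and, for each relative root $a$, an equality of the value set $\tilde\Gamma_a$ cutting out the affine root hyperplanes. Pinning down a base point---the special vertex $x_0$ produced by quasi-split descent as in Section \ref{ARS}---then promotes the identification of lattices to an isomorphism of affine spaces, and matching the hyperplane arrangements identifies the polysimplicial structures.

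Pieces (i) and (ii) are essentially formal from what is already in the excerpt. Applying Lemma \ref{charQSCLF} to $(T,T')$ and restricting to the derived subgroups gives a $\Del_m$-equivariant isomorphism $X_*(S^{der}) \cong X_*(S'^{der})$, since by Lemma \ref{maxFsplit} these lattices are read off Galois-equivariantly from $X_*(T)$ and $X_*(T')$. The Chevalley--Steinberg compatibility recalled in Section \ref{CSCLF} provides a $\Del_m$-equivariant bijection $\Phi_m\colon \Phi(G,T)\to \Phi(G',T')$; restricting to $S$ and taking $\Gal(K/F)$-orbits carries $\Phi(G,S)$ onto $\Phi(G',S')$, preserves the base $\Delta$, indivisibility, and the dichotomy between Cases I and II of Section \ref{CSsystem}. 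Along the way, the field of definition $L_\alpha$ of a root $\alpha$ and its analogue $L_{\alpha'}$ for $\alpha' = \Phi_m(\alpha)$ correspond under $\Del_m$, since both are subfields of $K$, $K'$ cut out by matching subgroups of $\Gal(K/F) \cong \Gal(K'/F')$; in particular $L_a$ corresponds to $L_{a'}$, and the ramification indices agree.

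The substantive step is (iii): for $a \in \Phi(G,S)$ with image $a'$, one must check $\tilde\Gamma_a = \tilde\Gamma_{a'}$ as subsets of $\bbR$. In Case I the relevant set is $\omega(L_a^\times) = \tfrac{1}{e_{L_a/F}}\bbZ$, and since $L_a/F$ is at most $m$-ramified (being a sub-extension of $K/F$), Deligne's equivalence gives $e_{L_a/F} = e_{L_{a'}/F'}$, so the value sets coincide. In Case II one has a separable quadratic extension $L_a/L_{2a}$, and $\tilde\Gamma_a$ is defined through the supremum over cosets of $U_{2a}(F)$; the description via $H_0(L_a, L_{2a})$ in Section \ref{CSsystem} and the alternative $H_0^\lambda$-parameterization in Section \ref{RSGS} show that $\tilde\Gamma_a$ is determined by $\omega(L_a^\times)$, $\omega(L_a^0)$ and, when the residue characteristic is $2$, the value $\omega(\lambda)$ for some $\lambda \in (L_a)_{\max}^1$. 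For residue characteristic different from $2$ one may take $\lambda = 1/2$ and reduce formally to the Case I type of argument. The main obstacle is the residue characteristic $2$ case, which is precisely what Lemma \ref{reschar2} resolves: writing $L_a = L_{2a}[t]$ with $t^2 + At + B = 0$, the bound $\omega(A) < m/2 + 1/e_a$ together with the normalization of $B$ from part (a) pins down modulo $\fp_F^m$ a maximally-valued trace-$1$ element and the relevant trace-$0$ filtration, so their $F'$-analogues appear with the same valuations via $\psi_m$.

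Once (i)--(iii) are in place, every affine root $\psi(x) = a(x - x_0) + l$ with $a \in \Phi(G,S)$ and $l \in \tilde\Gamma_a$ transports verbatim to an affine root over $F'$, so the hyperplane arrangement and hence the polysimplicial structure match, producing $\cA_m$. For the Weyl group part, $W = W(G,S)$ is determined purely by $\Phi(G,S)$ and so matches via (ii); $X_*(T)_{I_F}$ transports via Lemma \ref{charQSCLF} (this is the Galois-coinvariants version of the same statement); and the extension class in
\[ 1 \to X_*(T)_{I_F} \to W^e \to W \to 1 \]
is controlled by the $W$-action on $X_*(T)_{I_F}$, which is root-datum information already transported. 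Therefore $W^e \cong W^{e'}$, completing the proof.
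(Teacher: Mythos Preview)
Your approach is essentially the same as the paper's: identify the underlying affine spaces via Lemma~\ref{charQSCLF} and the base point $x_0$, transport the relative root system via the $\Del_m$-equivariant $\Phi_m$, and then verify $\tilde\Gamma_a=\tilde\Gamma_{a'}$ root by root. Two minor remarks. First, in Case~II your invocation of Lemma~\ref{reschar2} is slightly over-engineered for this proposition: the paper uses only the conclusion $A\neq 0$, which (via \cite[4.3.4]{BT2}) forces the explicit formulas $\tilde\Gamma_a=\tfrac{1}{2e_a}+\tfrac{1}{e_a}\bbZ$ and $\tilde\Gamma_{2a}=\tfrac{1}{e_{2a}}\bbZ$ depending only on $e_a,e_{2a}$; the finer bound on $\omega(A)$ and the ``pinning down mod $\fp_F^m$'' of $\lambda$ are not needed here and only enter later in Lemma~\ref{rootgroupschemes}. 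Second, you actually go further than the paper by sketching the $W^e\cong W^{e'}$ claim, which the paper's proof omits; your argument via the short exact sequence and Lemma~\ref{charQSCLF} is the natural one.
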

\begin{proof}
The reduced apartment $\cA(S,F)$ is an affine space under $\displaystyle{X_*(S^{der}) \otimes_\bbZ \bbR}$. Using Lemma \ref{charQSCLF}, we see that $D_m$ induces a unique bijection $\cA_m: \cA(S,F) \rightarrow \cA(S',F')$ such that $x_0 \rightarrow x_0'$ (where $x_0, x_0'$ are as in Section \ref{ARS} arising from Chevalley-Steinberg systems chosen compatibly as in Section \ref{CSCLF}).

It remains to observe that $\cA_m$ is a simplicial isomorphism. 
Recall that the elements of $\Phi(G,S)$ are restrictions to $S$ of the elements of $\Phi(G,T)$ and two elements of $\Phi(G,T)$ restrict to the same element of $\Phi(G,S)$ if and only if they lie in the same $\Gal(K/F)$-orbit. Further, with $\tilde{\Delta}$ denoting a base of $\Phi(G, T)$, the elements $\alpha|_S, \alpha \in \tilde{\Delta}$ form a base $\Delta$ of $\Phi(G,S)$. Let $\Phi_m: \Phi(G, T) \xrightarrow{\cong} \Phi(G',T')$ (since both are isomorphic to $\Phi(G_0,T_0)$). This isomorphism is $\Del_m$-equivariant. Hence the obvious map $\Phi(G,S) \rightarrow \Phi(G',S'),\; \alpha|_S \rightarrow \Phi_m(\alpha)|_{S'}$, which we also denote as $\Phi_m$, is an isomorphism of the relative root systems (In more detail, since $S$ and $S'$ have the same rank, we have a isomorphism of $\bbR$-vector spaces $X^*(S) \otimes_\bbZ \bbR \rightarrow X^*(S') \otimes_{\bbZ} \bbR$. Further, we have a bijection between $\Delta \rightarrow \Delta'$; this is because suppose $\Phi_m(\alpha)|_{S'}  = \Phi_m(\beta)|_{S'}$, then there is $\eta' \in \Gal(\Omega'/F')$ with $\eta'\cdot \Phi_m(\alpha) = \Phi_m(\beta)$. Then $\eta \cdot \alpha  =\beta$ where $\eta' = \Del_m(\eta)$. Finally note that $\langle \Phi_m(\alpha)|_{S'}, \Phi_m(\beta)|_{S'} \rangle =\langle \Phi_m(\alpha), \Phi_m(\beta) \rangle  = \langle \alpha, \beta \rangle = \langle \alpha|_{S}, \beta|_{S} \rangle$). 

 The vanishing hyperplanes with respect to the affine roots $\Phi^{af}(G,S)$ gives the simplicial structure on $\cA(S,F)$. Recall that
\[\Phi^{af}(G,F) = \{\psi:\cA(S,F) \rightarrow \bbR \;|\; \psi(\cdot ) = a(\cdot -x_0)+l, a \in \Phi(G,S), \; l \in \tilde\Gamma_a\}.\]
For any $a \in \Phi(G,S)$, let $a' =\Phi_m(a)$.  Let $L_{a'} \subset K'$ denote splitting extension of the root $a'$ obtained by $\Del_m$. Since $F$ is strictly Henselian, the extensions $L_a/F$ and $L_{a'}/F'$ are totally ramified. To prove that the bijection $\Phi_m$ extends to a bijection $\Phi^{af}_m:\Phi^{af}(G,F)\rightarrow \Phi^{af}(G',F')$ making $\cA_m$ a simplicial isomorphism, we simply have to observe that for each $a \in \Phi(G,S)$, $\tilde\Gamma_a = \tilde\Gamma_{a'}$.  By Section 4.3.4 of \cite{BT2}, we have the following:
\begin{enumerate}[wide, labelwidth=!, labelindent=0pt]
\item[\textbf{Case} I.] Suppose $a \in \Phi^{red}(G,S), 2a \notin \Phi(G,S)$. Then $\Gamma_a = \tilde\Gamma_a = \frac{1}{e_a}\bbZ$. 
 \item[\textbf{Case} II.] Suppose $a, 2a \in \Phi(G,S)$.
 \begin{enumerate}
 \item Suppose $L_a/L_{2a}$ is ramified and the residue characteristic of $F$ is not 2. Then  \[\tilde\Gamma_a = \frac{1}{e_a}\bbZ\text{ and }\tilde\Gamma_{2a} = \frac{1}{e_a}+\frac{1}{e_{2a}}\bbZ.\] 
\item Suppose $L_a/L_{2a}$ is ramified and the residue characteristic of $F$ is 2. By Lemma \ref{reschar2}, $A \neq 0$. Then \[\tilde\Gamma_a = \frac{1}{2e_a} +\frac{1}{e_a}\bbZ \text{ and } \tilde\Gamma_{2a}=\frac{1}{e_{2a}}\bbZ.\]
\end{enumerate}
\end{enumerate}
Since $e_a = e_{a'}, e_{2a} = e_{2a'}$, and the valuations $\omega$ and $\omega'$ are normalized so that  $\omega(F) = \omega'(F') = \bbZ$, we have $\tilde\Gamma_a = \tilde\Gamma_{a'}$ for all $a \in \Phi(G,S)$.
\end{proof}

\subsection{Congruences of parahoric group schemes; Strictly Henselian case}
In this section, we additionally assume that $F$ is strictly Henselian.
\begin{theorem}\label{QSP} Let $m \geq 1$ and let $F$ and $F'$ be such that $e_F,e_{F'} \geq 2m$.  Let $l$ be as in Lemma \ref{torusCLF} and let $D_l$ and $G, S, T, B$ as in the beginning of this section. Let $\cF \in \cA(S, F)$ and $\cF' = \cA_m(\cF)$ as in Lemma \ref{ACLF}. Let $\cP_\cF$ be the parahoric group scheme  over $\fO_F$ attached to $\cF$ by Bruhat-Tits, and let $\cP_{\cF'}$ be the group scheme attached to $\cF'$ over $\fO_{F'}$. Then the congruence data $D_l$ induces an isomorphism of group schemes
\begin{align*}
\tp_m: \cP_{\cF} \times_{\fO_F} \fO_F/\fp^m_{F} \rightarrow \cP_{\cF'} \times_{\fO_{F'}} \fO_{F'}/\fp_{F'}^m \times_{\psi_m^{-1}} \fO_F/\fp_F^m.
\end{align*}
In particular, $\cP_\cF(\fO_F/\fp_F^m) \cong \cP_{\cF'}(\fO_{F'}/\fp_{F'}^m)$ as groups. 
\end{theorem}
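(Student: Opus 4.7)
The plan is to trace through Landvogt's construction of $\cP_\cF$ recalled in Section \ref{ParahoricLandvogt}, establishing at each stage that the input data reduce compatibly modulo $\fp_F^m$ under the congruence data $D_l$, and then invoke the functoriality of the Artin-Weil procedure. First I would handle the root subgroup schemes one root at a time. For $a \in \Phi(G,S)$, the splitting extension $L_a \subset K$ is determined by $D_l$ via Deligne's theory (Section \ref{Deligne}), so in Case I of Section \ref{RSGS} the canonical smooth $\fO_F$-scheme $\cL_{a,k}$ attached to the $\fO_F$-lattice $L_{a,k}$ matches its primed analogue modulo $\fp_F^m$ for $l$ sufficiently large (relative to $k$); transport of structure via the Chevalley-Steinberg pinnings chosen compatibly in Section \ref{CSCLF} then yields $\cU_{a,f_\cF(a)}\times_{\fO_F}\fO_F/\fp_F^m \cong \cU_{a',f_{\cF'}(a')}\times_{\fO_{F'}}\fO_{F'}/\fp_{F'}^m$. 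In Case II, the same argument applied to the factors $\cL_{a,k+\gamma}$ and $\cL^0_{a,l}$ of $\cH^\lambda_{0,k}$, together with the observation that the twisted multiplication \eqref{glgf} only involves $\tau$, $\lambda$ and the field arithmetic of $L_a/L_{2a}$, reduces modulo $\fp_F^m$ compatibly; the hypothesis $e_F,e_{F'}\geq 2m$ combined with Lemma \ref{reschar2} guarantees that the element $\lambda \in (L_a)^1_{\max}$ and the defining equation $t^2+At+B=0$ of $L_a/L_{2a}$ are captured by $D_l$ in residue characteristic 2.

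Next I would combine these local pieces. By Lemma \ref{torusCLF} applied to $\cT$, the congruence data $D_l$ gives an isomorphism $\cT\times_{\fO_F}\fO_F/\fp_F^m \cong \cT'\times_{\fO_{F'}}\fO_{F'}/\fp_{F'}^m$. Using the uniqueness in Proposition 3.3.2 of \cite{BT2} together with a good ordering of $\Phi^{\pm,red}$ (which by Proposition \ref{ACLF} transports via $\Phi_m$ to a good ordering of $\Phi^{\pm,red,\prime}$), the product schemes $\cU^\pm_\cF$ also reduce compatibly. Putting these together, the open cell $\cX_\cF = \cU^-_\cF\,\cT\,\cU^+_\cF$ admits a canonical isomorphism with $\cX_{\cF'}$ after base change to $\fO_F/\fp_F^m$.

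The heart of the argument, and the main obstacle, is to check that Landvogt's $\fO_F$-birational group law on $\cX_\cF$ reduces compatibly. At the level of generic fibers the identification of $U^-TU^+$ with $U^+TU^-$ is dictated purely by the root datum $(G,T)$, which by Lemma \ref{charQSCLF} is matched with that of $(G',T')$, so the denominators $f$ and $f'$ of Section \ref{ParahoricLandvogt} can be chosen compatibly and land in the integral subrings $\fO_F[\cU^-_\cF\cT\cU^+_\cF]\setminus \pi\fO_F[\cdots]$ on both sides. What then must be checked is that Proposition 5.16 of \cite{La00}, which gives the integral identification $\fO_F[\cU^-_\cF\cT\cU^+_\cF]_f = \fO_F[\cU^+_\cF\cT\cU^-_\cF]_{f'}$, reduces modulo $\fp_F^m$ to its primed analogue; this comes down to the Chevalley commutator relations between the $\cU_{a,\cF}$, whose structure constants are universal in the root datum for Case I and lie in $\fO_{L_a}$, $\fO_{L_{2a}}$ for Case II, hence determined by $D_l$ for $l\gg m$. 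I would invoke the commutation formulas of Section 4.2 of \cite{BT2} and compare them term-by-term, using the compatibility of Chevalley-Steinberg systems from Section \ref{CSCLF}. This yields an isomorphism of $\fO_F/\fp_F^m$-birational group laws on $\cX_\cF\times_{\fO_F}\fO_F/\fp_F^m$ and $\cX_{\cF'}\times_{\fO_{F'}}\fO_{F'}/\fp_{F'}^m$.

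Finally, the Artin-Weil theorem (Theorem 5.1 of \cite{BLR}), applied to the scheme $\cY_\cF$ obtained in Proposition 5.17 of \cite{La00} by gluing $G$ to $\cX_\cF$, produces $\cP_\cF$ as a solution to a universal property in the birational group law. Running this construction over $\fO_F/\fp_F^m$ on both sides and using the isomorphism established in the previous step produces the desired
\[
\tp_m : \cP_{\cF}\times_{\fO_F}\fO_F/\fp_F^m \xrightarrow{\cong} \cP_{\cF'}\times_{\fO_{F'}}\fO_{F'}/\fp_{F'}^m \times_{\psi_m^{-1}}\fO_F/\fp_F^m,
\]
and the group isomorphism $\cP_\cF(\fO_F/\fp_F^m)\cong \cP_{\cF'}(\fO_{F'}/\fp_{F'}^m)$ on $\fO_F/\fp_F^m$-points is immediate. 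The principal subtlety to keep track of is bookkeeping in Case II and in residue characteristic 2, which is precisely what forces the bound $e_F,e_{F'}\geq 2m$ and the use of Lemma \ref{reschar2}.
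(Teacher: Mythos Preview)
Your plan matches the paper's approach almost exactly: handle the root subgroup schemes case by case (this is the content of the paper's Lemma~\ref{rootgroupschemes}), invoke Lemma~\ref{torusCLF} for $\cT$, assemble these into an isomorphism $\cX_\cF^{(m)}\cong\cX_{\cF'}^{(m)}$ of schemes equipped with equivalent birational group laws, and then conclude via Artin--Weil. One small point: the root subgroup step needs only $D_m$, not $D_l$ ``for $l$ sufficiently large relative to $k$''; the larger $l$ enters only through the torus via Lemma~\ref{torusCLF}.

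The one place where your phrasing diverges from what actually works is the final step. You write that you will ``run this construction over $\fO_F/\fp_F^m$ on both sides,'' but the existence half of the Artin--Weil theorem in \cite{BLR} is stated over a Dedekind (or at least normal) base, and $\fO_F/\fp_F^m$ is an Artin local ring with nilpotents. The paper sidesteps this: it does \emph{not} rebuild $\cP_\cF$ from $\cX_\cF^{(m)}$. Instead it observes that $\cP_\cF^{(m)}$ and $\cP_{\cF'}^{(m)}\times_{\psi_m^{-1}}\fO_F/\fp_F^m$ are already in hand (as reductions of the schemes constructed over $\fO_F$ and $\fO_{F'}$), that both are smooth, separated, faithfully flat, finite-type group schemes over $\fO_F/\fp_F^m$, and that both restrict to the same birational group law on the common open dense $\cY_\cF^{(m)}\cong\cX_\cF^{(m)}$. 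Then the \emph{uniqueness} part of Artin--Weil (Proposition~3, Section~5.1 of \cite{BLR}), which is valid in this generality, forces the isomorphism. So in your write-up, replace ``running the construction'' with ``reducing the already constructed parahoric schemes and invoking uniqueness.'' Your mention of a universal property suggests you have the right idea; just be explicit that it is uniqueness, not existence, that is being used over the Artinian base.
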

To prove this theorem, we will study the reduction of root subgroup schemes mod $\fp_F^m$ and prove that they are determined by congruence data, use the result of Chai-Yu that the reduction of the  N\'eron model of the torus in determined by congruence data, study the reduction of $\fO_F$-birational group laws in Section \ref{ParahoricLandvogt}, and invoke the Artin-Weil theorem to obtain the corresponding result for parahoric group schemes in Section \ref{QSPSH}. 

The following lemma is easy. 

\begin{lemma} Let $M$ be a free $\fO_F$-module of finite type and let $A = \Sym_{\fO_F}(M^\vee)$ be the symmetric algebra of $M^\vee$, where $M^\vee: = Hom_{\fO_F}(M, \fO_F)$. Then \[A \otimes_{\fO_F} \fO_F/\fp_F^m \cong \Sym_{\fO_F/\fp_F^m} (M^\vee \otimes_{\fO_F} \fO_F/\fp_F^m) \cong \Sym_{\fO_{F}/\fp_{F}^m} (\Hom_{\fO_F/\fp_F^m}(M \otimes_{\fO_F} \fO_F/\fp_F^m, \fO_F/\fp_F^m)).\] 
\end{lemma}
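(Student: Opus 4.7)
The plan is to verify the two claimed isomorphisms separately; each is a standard compatibility between a functor (symmetric algebra, respectively dualization) and base change, and since $M$ is free of finite rank I do not expect any serious obstacle. The lemma is included purely as preparation for writing down reductions mod $\fp_F^m$ of the root-subgroup schemes of Section \ref{RSGS} in coordinates.

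For the first isomorphism, I would invoke the well-known fact that the symmetric algebra commutes with base change: for any ring homomorphism $R \to R'$ and any $R$-module $N$, there is a natural $R'$-algebra isomorphism
\[ \Sym_R(N) \otimes_R R' \;\cong\; \Sym_{R'}(N \otimes_R R'). \]
This follows from the adjunction characterizing $\Sym$ as left adjoint to the forgetful functor from commutative algebras to modules (so that the two constructions $N \mapsto \Sym_R(N)\otimes_R R'$ and $N \mapsto \Sym_{R'}(N\otimes_R R')$ represent the same functor on $R'$-algebras). Equivalently, it can be checked grade-by-grade using that the tensor algebra commutes with base change and that the defining symmetric relations $x \otimes y - y \otimes x$ are preserved by $-\otimes_R R'$. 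Applying this with $R = \fO_F$, $R' = \fO_F/\fp_F^m$, and $N = M^\vee$ yields the first claim.

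For the second isomorphism, I would use the canonical evaluation map
\[ \Hom_{\fO_F}(M, \fO_F) \otimes_{\fO_F} \fO_F/\fp_F^m \longrightarrow \Hom_{\fO_F/\fp_F^m}(M \otimes_{\fO_F} \fO_F/\fp_F^m,\, \fO_F/\fp_F^m), \]
which is a natural isomorphism whenever $M$ is finitely generated projective. Since $M$ is free of finite rank, choosing an $\fO_F$-basis identifies both sides with $(\fO_F/\fp_F^m)^n$ and makes the map visibly bijective. Applying $\Sym_{\fO_F/\fp_F^m}$ to this isomorphism and composing with the previous step produces the desired chain of identifications.
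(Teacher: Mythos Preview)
Your argument is correct; both isomorphisms are standard base-change compatibilities, and your justification via the adjunction for $\Sym$ and the free-module check for the dual is entirely sound. The paper itself does not prove this lemma at all: it simply records ``The following lemma is easy'' and moves on, so your write-up is more detailed than the original but in the same elementary spirit.
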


\begin{lemma}\label{rootgroupschemes} Let $m \geq 1$, let $F$ and $F'$ be such that $e_F,e_{F'}\geq 2m$ and let $D_m$ as before. Let $a \in \Phi(G,S)$ and $k \in \bbR$. Let $\cU_{a,k}$ be the $\fO_F$-group scheme in Section \ref{RSGS}. Let $a' =\Phi_m(a) \in \Phi(G',S')$ and let $\cU_{a',k}'$ be the $\fO_{F'}$-group scheme in Section \ref{RSGS}. Then the congruence data $D_m$ induces an isomorphism of group schemes 
\[\cU_{a,k} \times_{\fO_F} \fO_F/\fp_F^m \cong \cU_{a',k} \times_{\fO_{F'}} \fO_{F'}/\fp_{F'}^m \times_{\psi_m^{-1}} \fO_F/\fp_F^m.\]
In particular,
\[\cU_{a,k}(\fO_{F}/\fp_{F}^m) \cong \cU_{a',k}(\fO_{F'}/\fp_{F'}^m).\]
\end{lemma}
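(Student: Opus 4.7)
The plan is to split along the two cases of Section \ref{RSGS} and, in each, reduce the statement to a module-level compatibility that is then converted to the scheme level via the lemma on symmetric algebras preceding this one. The congruence data $D_m$ gives, via Deligne's theory, a canonical isomorphism $\fO_K/\pi^m\fO_K \cong \fO_{K'}/\pi'^m\fO_{K'}$ equivariant for $\Gamma_{K/F} \xrightarrow{\gamma} \Gamma_{K'/F'}$. For each $a \in \Phi(G,S)$, the splitting extension $L_a \subset K$ corresponds under Deligne to $L_{a'} \subset K'$ (with $a' = \Phi_m(a)$ as in Proposition \ref{ACLF}), so we obtain a ring isomorphism $\fO_{L_a}/\pi^m \cong \fO_{L_{a'}}/\pi'^m$ that sends the filtration piece $L_{a,k}/\pi^m L_{a,k}$ to $L_{a',k}/\pi'^m L_{a',k}$ (and analogously for $L_{a,k}^0$, since the trace condition is preserved by the $\gamma$-equivariance). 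Also note that $e_a = e_{a'}$ and $e_{2a} = e_{2a'}$, so the numerical parameters $\gamma, l, k+\gamma$ appearing in Section \ref{RSGS} match on the two sides.

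For Case I ($a \in \Phi^{\mathrm{red}}(G,S)$ with $2a \notin \Phi(G,S)$), the group scheme $\cL_{a,k}$ is the canonical smooth $\fO_F$-group scheme attached to the free $\fO_F$-module $L_{a,k}$. The preceding lemma gives
\[
\fO_F[\cL_{a,k}] \otimes_{\fO_F} \fO_F/\fp_F^m \;\cong\; \Sym_{\fO_F/\fp_F^m}\!\bigl(\Hom_{\fO_F/\fp_F^m}(L_{a,k}/\pi^m L_{a,k},\,\fO_F/\fp_F^m)\bigr),
\]
and similarly on the $F'$-side. The module isomorphism above therefore produces an isomorphism $\cL_{a,k} \times_{\fO_F} \fO_F/\fp_F^m \cong \cL_{a',k}' \times_{\fO_{F'}} \fO_{F'}/\fp_{F'}^m \times_{\psi_m^{-1}} \fO_F/\fp_F^m$, and transport of structure via the pinnings $x_a, x_{a'}$ (which are matched by the compatibility of Chevalley-Steinberg systems in Section \ref{CSCLF}) converts this into the desired isomorphism for $\cU_{a,k}$. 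The extension to $k \in \bbR \setminus \{0\}$ via Section 4.3.2 of \cite{BT2} is purely formal.

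For Case II ($a, 2a \in \Phi(G,S)$), the group scheme $\cU_{a,k}$ is obtained, via transport by $x_a \circ \Res_F^{L_{2a}} j_\lambda^{-1}$, from $\cH_k^\lambda = \Res_F^{\fO_{L_{2a}}}(\cL_{a,k+\gamma} \times \cL_{a,l}^0)$ equipped with the twisted group law \eqref{glgf}. Here one must first pick the auxiliary element $\lambda \in (L_a)^1_{\max}$ compatibly on the two sides: since the Deligne isomorphism preserves the trace form and valuation, $(L_a)^1_{\max}$ corresponds to $(L_{a'})^1_{\max}$, and we choose $\lambda'$ as the image of $\lambda$. Then the underlying scheme $\cH_{0,k}^\lambda = \cL_{a,k+\gamma} \times \cL_{a,l}^0$ reduces mod $\pi^m$ as in Case I. The essential check is that the group law, whose nontrivial ingredient is the morphism $(u,\tilde u) \mapsto \lambda u\tau(\tilde u) - \tau(\lambda)\tau(u)\tilde u$ from $\cL_{a,k+\gamma} \times \cL_{a,k+\gamma}$ to $\cL_{a,l}^0$, is preserved mod $\pi^m$. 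This reduces to a purely algebraic statement about $\fO_{L_a}/\pi^m$ and $\tau$, which follows from the $\gamma$-equivariance of the Deligne isomorphism.

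The main obstacle is the residue-characteristic-$2$ sub-case of Case II, where $\lambda$ is forced to have nonzero valuation and the coefficients $A,B$ of the minimal polynomial of a generator of $L_a/L_{2a}$ play a role in cutting out $\cH_{0,k}^\lambda$. The hypothesis $e_F, e_{F'} \geq 2m$ (together with the refinement in Lemma \ref{reschar2}, which bounds $\omega(A)$ by $m/2 + 1/e_a$) is precisely what guarantees that all the valuations of the terms appearing in \eqref{glgf} and in the definition of $\gamma = -\tfrac{1}{2}\omega(\lambda)$ are controlled mod $\pi^m$, so that the group law really descends to the mod $\pi^m$ level and matches on both sides. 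Once this verification is in place, transport of structure via $x_a$ yields the claimed isomorphism of $\cU_{a,k}$ mod $\pi^m$, and evaluating at $\fO_F/\fp_F^m$-points (using the smoothness of $\cU_{a,k}$) gives the final assertion on groups of points.
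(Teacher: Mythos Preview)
Your overall strategy matches the paper's: split into Cases I and II, reduce to a module-level isomorphism via the symmetric-algebra lemma, and then check the group law is carried along. Case I is fine and essentially identical to the paper.

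In Case II there is a genuine gap in your justification for the isomorphism of the $L^0$-pieces. You write that $L_{a,k}^0$ matches $L_{a',k}^0$ ``since the trace condition is preserved by the $\gamma$-equivariance,'' and that one may take $\lambda'$ to be ``the image of $\lambda$.'' But the Deligne isomorphism is only an isomorphism modulo $\fp^m$; it preserves the condition $v+\tau(v)\equiv 0 \pmod{\fp^m}$, not the exact equation $v+\tau(v)=0$. Consequently, the $\psi_m$-image of the reduction of the \emph{exact} trace-zero lattice $L_{a,l}^0$ lands a priori only in the (strictly larger) set of mod-$\fp'^m$ trace-zero elements, not in the reduction of $L_{a',l}^0$. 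This is exactly the point where the proof can fail across a change of characteristic (e.g.\ $\Char F=0$, residue char $2$, versus $\Char F'=2$): in the first case $L_a^0=\{y(1-2tA^{-1}):y\in L_{2a}\}$, in the second $L_{a'}^0=L_{2a'}$, and no abstract ``$\gamma$-equivariance'' explains why these agree mod $\fp^m$.

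The paper closes this gap by making \emph{specific} choices---$\lambda=\tfrac12$ when the residue characteristic is $\neq 2$, and $\lambda=tA^{-1}$ when it is $2$---and then computing $L_a^0$ explicitly from the minimal polynomial $t^2+At+B=0$ of a generator of $L_a/L_{2a}$. In the mixed-characteristic situation one finds $L_a^0=L_{2a}\cdot(1-2tA^{-1})$, and the crucial estimate (using Lemma~\ref{reschar2} and $e_F\ge 2m$) is
\[
\omega(2tA^{-1})=e_F+\tfrac{1}{e_a}-\omega(A)>e_F-\tfrac{m}{2}\ge m,
\]
so that $1-2tA^{-1}\in 1+\fp_{L_a}^{me_a}$. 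This is what forces $L_{a,l}^0$ and $L_{a',l}^0$ to have the \emph{same} reduction mod $\fp^m$, and it is the only place where the hypothesis $e_F,e_{F'}\ge 2m$ is genuinely used. Your final paragraph correctly locates the obstacle here, but the argument you give does not actually establish it; you should replace the ``trace is preserved'' sentence with this explicit computation (and choose $\lambda'=t'(A')^{-1}$ on the $F'$-side rather than an unspecified ``image'').
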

\begin{proof} We will  stick to the notation in Section \ref{RSGS}. 
\begin{enumerate}[wide, labelwidth=!, labelindent=0pt]
  \item[\textbf{Case} I.] Suppose $a\in \Phi^{red}(G,S), 2a \notin \Phi(G,S)$. The affine ring representing $\cU_{a,k}$ is isomorphic to  $\Sym_{\fO_F} L_{a,k}^\vee$. 
Note that $L_{a,k} = \fp_{L_a}^{\lceil k/e\rceil}$. Since $\fp_{L_a}$ is a free $\fO_F$-module of rank equal to $[L_a:F]$, it is clear that the data $D_m$ induces an isomorphism of  $L_{a,k} \otimes_{\fO_F} \fO_{F}/\fp_F^m$ and $L_{a',k} \otimes_{\fO_{F'}} \fO_{F'}/\fp_{F'}^m$ and we are done by the previous lemma.
\item[\textbf{Case} II.] Suppose $a, 2a \in \Phi(G,S)$.  Since $F$ is strictly Henselian, the extension $L_a/L_{2a}$ is totally ramified. Let $L_a = L_{2a}(t)$, where $t^2+At+B = 0$ with $A,B$ satisfying Lemma 4.3.3 of \cite{BT2}. When the
\begin{itemize}[leftmargin=*]
\item residue characteristic of $F$ is not 2, we take $\lambda = 1/2$ (See Lemma 4.3.3 (ii) of \cite{BT2}),
\item residue characteristic of $F$ is 2, we take $\lambda = tA^{-1}$ (using  Lemma 4.3.3 (ii) of \cite{BT2} and Lemma \ref{reschar2}). 
\end{itemize}
Then the affine ring representing the scheme $\cH_0^{\lambda}$ is $\Sym_{\fO_{L_{2a}}} L_{a,k+\gamma}^\vee \otimes_{\fO_{L_{2a}}}  \Sym_{\fO_{L_{2a}}} (L_{a,l}^0)^\vee \cong \Sym_{\fO_{L_{2a}}}( (L_{a,k+\gamma}  \times L_{a,l}^0)^\vee)$, where $l = 2k+\frac{1}{e_a}$.
We describe $L_{a,l}^0$.
\begin{enumerate}
\item If the residue characteristic of $F$ is not 2, then using that $\omega(2) = 0$ in Lemma 4.3.3 of \cite{BT2}, we see that $A = 0$. Then $L_a^0 = \{ x \in L_a \;|\; \tau(x) +x = 0\}=\{ yt\;|\; y \in L_{2a}\}$ 
and \[L_{a,l}^0=\{ yt\;|\; y \in L_{2a},\; \omega(yt)\geq l\} = \{yt\;|\; y \in L_{2a}, \omega(y) \geq 2k\}.\]
\item If the residue characteristic of $F$ is 2, then
\begin{enumerate}
\item if $\Char(F)=2$, then $L_a^0 = L_{2a}$ and $L_{a, l}^0 = \{y \in L_{2a}\;|\; \omega(y) \geq l\}.$
\item if $\Char(F) = 0$, then $L_a^0 = \{y(1-2tA^{-1})\;|\; y \in L_{2a}\}$. By Lemma \ref{reschar2}, we have \[\omega(2tA^{-1}) = e_F+\frac{1}{e_a} -\omega(A) > e_F-\frac{m}{2}\geq m\]
since $e_F \geq 2m$. Hence $1-2tA^{-1} \in 1+\fp_{L_{a}}^{me_{a}}$, and $L_{a, l}^0 = \{y(1-2tA^{-1})\;|\; y \in L_{2a}, \omega(y) \geq l\}.$
\end{enumerate}
\end{enumerate}

 Let $L_{a'}\subset \Omega'$ be obtained from $L_a$ via the Deligne isomorphism $\Del_m$. Then $L_{a'}$ is the splitting extension of the root $a'$ (and similarly we obtain $L_{2a'}$). We may and do assume that
 $L_{a'} = L_{2a'}(t')$, where $t'^2+A't'+B' =0$, with $A',B'$ satisfying
 \begin{itemize}[leftmargin=*]
 \item $\omega(A) = \omega'(A')$ and $A\mod \fp_{L_{2a}}^{me_{2a}} \xrightarrow{\psi_m} A' \mod \fp_{L_{2a'}'}^{me_{2a}}$
 \item $\omega(B) = \omega'(B')$ and $B\mod \fp_{L_{2a}}^{me_{2a}} \xrightarrow{\psi_m} B' \mod \fp_{L_{2a'}'}^{me_{2a}}.$
 \end{itemize}
 Then $t \mod \fp_{L_{a}}^{me_{a}} \xrightarrow{\psi_m} t' \mod \fp_{L_{a'}'}^{me_{a}}.$
 It is now easy to check that the map $\psi_m$ induces isomorphisms
 \begin{align*}
 L_{a, k+\gamma}  \otimes_{\fO_{L_{2a}}} \fO_{L_{2a}}/\fp_{L_{2a}}^{me_{2a}} &\cong L_{a',k+\gamma}^0 \otimes_{\fO_{L_{2a'}'}} \fO_{L_{2a'}'}/\fp_{L_{2a'}'}^{me_{2a}}\\
 L_{a,l}^0  \otimes_{\fO_{L_{2a}}} \fO_{L_{2a}}/\fp_{L_{2a}}^{me_{2a}} &\cong L_{a',l}^0 \otimes_{\fO_{L_{2a'}'}} \fO_{L_{2a'}'}/\fp_{L_{2a'}'}^{me_{2a}}.
 \end{align*}
In the above, we have used that when the residue characteristic of $F$ is 2,  $1-2tA^{-1} \equiv 1 \mod \fp_{L_a}^{me_a}$.
 Consequently, $D_m$ induces an isomorphism of the reduction of the respective affine rings mod $\fp_{L_{2a}}^{me_{2a}}$. To see that this is an isomorphism of group schemes, we observe that reducing the map 
\begin{align*}
j: L_{a,k} \times L_{a, l}^0 \times L_{a,k} \times L_{a,l}^0 &\rightarrow L_{a,k} \times L_{a,l}^0\\
((x,y) , (x',y')) &\rightarrow (x+x', y+y' -\lambda x \tau(x') +\lambda x'\tau(x))
\end{align*}
mod $\fp_{L_{2a}}^{me_{2a}}$ is $\psi_m$-equivariant. Finally $\cH^{\lambda} = \Res_{\fO_F}^{\fO_{L_{2a}}} \cH_0^{\lambda}$ and the result now follows from \cite{BLR}, Page 192. 
\end{enumerate}
 The lemma for $\cU_{2a,k}$ follows using that
\[L_{a,k}^0  \otimes_{\fO_{L_{2a}}} \fO_{L_{2a}}/\fp_{L_{2a}}^{me_{2a}} \cong L_{a',k}^0 \otimes_{\fO_{L_{2a'}'}} \fO_{L_{2a'}'}/\fp_{L_{2a'}'}^{me_{2a}}\]
and \cite{BLR}, Page 192.
\end{proof}
The following corollary is an obvious consequence of the previous lemma.
\begin{corollary}\label{unipotentCLF}  With assumptions of Lemma \ref{rootgroupschemes}, and with $\cF' = \cA_m(\cF)$ where $\cF$ is a facet in $\cA(S,F)$, let $\cU_{a,\cF}$ (resp. $\cU_{a',\cF'}$) be the smooth root subgroup scheme over $\fO_F$ (resp.  $\fO_{F'}$) as in Section \ref{ParahoricLandvogt}.  The congruence data $D_m$ induces an isomorphism
\[ \cU_{a,\cF} \times_{\fO_F} \fO_F/\fp_F^m \cong \cU_{a, \cF'} \times_{\fO_{F'}} \fO_{F'}/\fp_{F'}^m \times_{\psi_m^{-1}} \fO_F/\fp_F^m.\] In particular, $\cU_{a,\cF}(\fO_F/\fp_F^m) \cong \cU_{a', \cF'}(\fO_{F'}/\fp_{F'}^m)$ as groups. 
\end{corollary}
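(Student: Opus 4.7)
The corollary is essentially an unpacking of Lemma \ref{rootgroupschemes} in light of the apartment isomorphism from Proposition \ref{ACLF}, so the proof is mostly a matter of checking that the index $k$ attached to a facet is preserved under the bijection $\cA_m$.

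My plan is as follows. First, I would fix a point $x \in \cF$ and set $x' := \cA_m(x) \in \cF' = \cA_m(\cF)$. By construction (see Section \ref{ParahoricLandvogt}) we have
\[
\cU_{a,\cF} \;=\; \cU_{a, f_x(a)}, \qquad \cU_{a',\cF'} \;=\; \cU_{a', f_{x'}(a')},
\]
where $a' = \Phi_m(a)$ and $f_x(a) = a(x - x_0)$, $f_{x'}(a') = a'(x' - x_0')$, with $x_0$, $x_0'$ the canonical base points coming from the chosen Chevalley--Steinberg systems. Since $\cA_m$ is constructed from $D_m$ as the unique bijection sending $x_0 \mapsto x_0'$ and compatible with the identifications $X_*(S^{der})\otimes\bbR \cong X_*(S'^{der})\otimes\bbR$ and $\Phi_m : \Phi(G,S) \cong \Phi(G',S')$ of Lemma \ref{charQSCLF} and Proposition \ref{ACLF}, we obtain
\[
f_{x'}(a') \;=\; \Phi_m(a)\bigl(\cA_m(x) - \cA_m(x_0)\bigr) \;=\; a(x - x_0) \;=\; f_x(a).
\]
Call this common value $k$.

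Second, since $\cU_{a,k}(\fO_F) = U_{a,k}$ depends only on the smallest element of $\tilde\Gamma_a \cup \{+\infty\}$ that is $\geq k$, and since $\tilde\Gamma_a = \tilde\Gamma_{a'}$ was already verified inside the proof of Proposition \ref{ACLF}, the integer-valued data entering the definitions of $\cU_{a,\cF}$ and $\cU_{a',\cF'}$ are genuinely the same number $k$. Hence Lemma \ref{rootgroupschemes} applies directly with this $k$, giving the $\psi_m$-equivariant isomorphism
\[
\cU_{a,\cF} \times_{\fO_F} \fO_F/\fp_F^m \;\cong\; \cU_{a',\cF'} \times_{\fO_{F'}} \fO_{F'}/\fp_{F'}^m \times_{\psi_m^{-1}} \fO_F/\fp_F^m,
\]
and taking $\fO_F/\fp_F^m$-points yields $\cU_{a,\cF}(\fO_F/\fp_F^m) \cong \cU_{a',\cF'}(\fO_{F'}/\fp_{F'}^m)$.

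There is essentially no obstacle here beyond bookkeeping: the only potential snag is verifying that the base points $x_0$ and $x_0'$ used to define $f_x$ match under $\cA_m$, but that is built into the definition of $\cA_m$ in Proposition \ref{ACLF} together with the compatible choice of Chevalley--Steinberg systems from Section \ref{CSCLF}. The non-trivial work — handling Case I versus Case II of the root groups, controlling the extension $L_a/L_{2a}$ in residue characteristic $2$ via Lemma \ref{reschar2}, and comparing the affine rings as symmetric algebras — has already been done in Lemma \ref{rootgroupschemes}, so this corollary is immediate.
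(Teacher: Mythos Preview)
Your proposal is correct and follows the same approach as the paper, which simply declares the corollary an obvious consequence of Lemma \ref{rootgroupschemes}; you have merely made explicit the bookkeeping (that $f_{x'}(a') = f_x(a)$ under $\cA_m$ and that $\tilde\Gamma_a = \tilde\Gamma_{a'}$) which the paper leaves implicit.
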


\subsubsection{Proof of Theorem \ref{QSP}}\label{QSPSH}
For a scheme $X$ defined over a local ring $R$ with maximal ideal $\fm$, we will denote $X^{(m)}:= X \times_{R} R/\fm^m$. Let $l$ be as in Lemma \ref{torusCLF}.  We want to prove that $D_l$ induces an isomorphism of $\fO_F/\fp_F^m$-group schemes $\cP_\cF^{(m)} \cong \cP_{\cF'}^{(m)} \times_{\psi_m^{-1} } \fO_F/\fp_F^m$. Let $\cX_\cF, \cX_{\cF'}$ be as in Section \ref{ParahoricLandvogt}.  Let $m^{(m)}$ be the $\fO_F/\fp_F^m$-birational group law on $\cX_\cF^{(m)}$ and similarly $n^{(m)}$  on $\cX_{\cF'}^{(m)}$.  Note that via $D_l$, we also have that
\[(\fO_F,\fO_K, \Gamma_{K/F}, \Lambda) \equiv_{\psi_e, \gamma, \lambda} (\fO_{F'},\fO_{K'}, \Gamma_{K'/F'}, \Lambda')\; (level\;\; l)\] 
as in the notation of Chai-Yu of Section \ref{CY}, where $\Lambda = X_*(T), \Lambda' = X_*(T')$; so the result of Lemma \ref{torusCLF} holds. We know by Lemmas \ref{torusCLF} and \ref{unipotentCLF} that 
\begin{align}\label{schemeiso}
\cX_\cF^{(m)} \cong \cX_{\cF'}^{(m)} \times_{\psi_m^{-1} } \fO_F/\fp_F^m
\end{align}
as $\fO_F/\fp_F^m$-schemes.  Further, by these lemmas, we also have that the $\fO_F/\fp_F^m$-birational group laws $n^{(m)} \times_{\psi_m^{-1} } \fO_F/\fp_F^m$ and $m^{(m)}$ on $\cX_\cF^{(m)}$ are equivalent. Since $\cY_\cF$ is the $\fO_F$-scheme obtained by gluing $G$ and $\cX_\cF$ along $\cX_\cF\times_{\fO_F}F$, we have that $\cY_\cF^{(m)}$ is isomorphic to $\cX_\cF^{(m)}$ as $\fO_F/\fp_F^m$-schemes. Now, $\cP_\cF^{(m)}$ with group law $\bar{m}^{(m)}$, and $\cP_{\cF'}^{(m)} \times_{\psi_m^{-1} } \fO_F/\fp_F^m$ with group law $\bar{n}^{(m)}\times_{\psi_m^{-1} } \fO_F/\fp_F^m$, are both smooth, separated $\fO_F/\fp_F^{m}$- group schemes that are faithfully flat and of finite type. Recall that the restriction of $\bar m$ to $\cY_\cF$ is $m$, and similarly for $\bar n$. Hence the group laws $\bar{m}^{(m)}$ and $\bar{n}^{(m)}\times_{\psi_m^{-1} } \fO_F/\fp_F^m$ have the same restriction to $\cY_\cF^{(m)}$. Following the the proof of uniqueness of Artin-Weil theorem (see Proposition 3, Section 5.1 of \cite{BLR}), we obtain that the group schemes $\cP_\cF^{(m)}$ and $\cP_{\cF'}^{(m)} \times_{\psi_m^{-1} } \fO_F/\fp_F^m$ are isomorphic.\qed

\subsection{Congruences of parahoric group schemes; Descending from $G_{ \Fsh}$ to $G_{F}$} In this section, $F$ denotes a non-archimedean local field and $\Fsh$ denotes the completion of the maximal unramified extension $\Fu$ of $F$. Let $A$ be a maximal $F$-split torus in $G$, $S$ maximal $\Fu$-split $F$-torus that contains $A$. Let $T= Z_{G}(S)$. Note that $X_*(S) = X_*(T)^{\Gal(\Omega/\Fu)}$ and $X_*(A) = X_*(T)^{\Gal(\Omega/F)}$. 
\begin{lemma} The simplicial isomorphism \[\cA_m: \cA(S, \Fsh) \rightarrow \cA(S',\FFsh)\] of Lemma \ref{ACLF} is $\Del_m$-equivariant. 
\end{lemma}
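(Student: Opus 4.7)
The plan is to exploit the explicit construction of $\cA_m$ from Proposition \ref{ACLF}: it is the unique affine map $\cA(S,\Fsh)\to\cA(S',\FFsh)$ that sends the distinguished point $x_0$ (coming from the Chevalley-Steinberg system of Section \ref{CSCLF}) to $x_0'$ and whose underlying linear part is the isomorphism $\lambda:X_*(S^{der})\otimes_\bbZ\bbR\to X_*(S'^{der})\otimes_\bbZ\bbR$ induced by the congruence data $D_m$ via Lemma \ref{charQSCLF}. So I would verify two things separately: that $x_0$ is $\Gal(\Fsh/F)$-fixed (and $x_0'$ is $\Gal(\FFsh/F')$-fixed), and that $\lambda$ is $\Del_m$-equivariant. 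The conclusion then follows formally from the fact that $\cA(S,\Fsh)$ is an affine space under $X_*(S^{der})\otimes_\bbZ\bbR$ on which the Galois action is affine with linear part the canonical action.

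For the first point, recall from Section \ref{splitting extension} and Section \ref{ARS} that $x_0$ is the origin of the apartment associated to the valuation $\phi=(\phi_a)_{a\in\Phi(G,S)}$ of the root datum $(T,(U_a))$, which is assembled from the valuation $\omega$ of $\Fsh$ via the Chevalley-Steinberg system $\{x_\alpha\}_{\alpha\in\tilde\Delta}$ of $G$. By construction (Section \ref{CSCLF}), this system was chosen $\Gamma_F$-equivariantly from the $\bbZ$-pinning using the cocycle $s$, i.e.\ $x_{\gamma(\alpha)}=\gamma\circ x_\alpha\circ\gamma^{-1}$ for $\gamma\in\Gal(K/F)$; moreover $\omega$ is $\Gal(\Fsh/F)$-invariant. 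It follows immediately from the explicit formula for $\phi_a$ recalled in Section \ref{splitting extension} that $\phi_a\circ\gamma=\phi_{\gamma(a)}$, so the valuation $\phi$ is stable under $\Gal(\Fsh/F)$, meaning the associated origin $x_0\in\cA(S,\Fsh)$ is a Galois-fixed point. The same argument applies to $x_0'$.

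For the second point, Lemma \ref{charQSCLF} (applied over $\Fsh$ and $\FFsh$, which corresponds to restricting to the inertia subquotient) supplies $\Del_m$-equivariant isomorphisms $X_*(T)\cong X_*(T')$ intertwining the Galois actions defined by the cocycles $s$ and $s'$. Since $S^{der}$ is a canonical quotient of $T$ (the maximal $F$-split subtorus of $T\cap G^{der}$), the induced map $\lambda$ on $X_*(S^{der})\otimes\bbR$ is likewise $\Del_m$-equivariant with respect to $\Gal(\Fsh/F)\to\Gal(\FFsh/F')$.

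Putting these together, any $x\in\cA(S,\Fsh)$ can be written as $x=x_0+v$ for a unique $v\in X_*(S^{der})\otimes\bbR$. Since $x_0$ is Galois-fixed and the action of $\gamma\in\Gal(\Fsh/F)$ is affine with linear part the canonical one, we have $\gamma\cdot x=x_0+\gamma\cdot v$, and by definition $\cA_m(x_0+v)=x_0'+\lambda(v)$. Hence, writing $\gamma'=\Del_m(\gamma)$,
\[
\cA_m(\gamma\cdot x)=\cA_m(x_0+\gamma\cdot v)=x_0'+\lambda(\gamma\cdot v)=x_0'+\gamma'\cdot\lambda(v)=\gamma'\cdot\cA_m(x),
\]
which is the required $\Del_m$-equivariance. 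The only substantive point, and the one I would devote the most care to, is the Galois invariance of $x_0$; everything else is a straightforward unwinding of the construction of $\cA_m$ in Proposition \ref{ACLF} together with Lemma \ref{charQSCLF}.
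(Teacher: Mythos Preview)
Your proposal is correct and follows essentially the same approach as the paper, which simply says the claim is clear from the proof of Proposition \ref{ACLF}, Section \ref{CSCLF}, and Lemma \ref{charQSCLF}; you have just unpacked what ``clear'' means by separating the Galois-invariance of the base point $x_0$ (from the $\Gal(K/F)$-equivariance of the Chevalley--Steinberg system in Section \ref{CSCLF}) and the $\Del_m$-equivariance of the linear part (from Lemma \ref{charQSCLF}). One small slip: $S^{der}=S\cap G^{der}$ is a \emph{subtorus} of $T$, not a quotient, so $X_*(S^{der})$ sits inside $X_*(T)$ as a sublattice (the $\Gal(K/\Fsh)$-invariants in $X_*(T\cap G^{der})$); also, in this section $S$ is maximal $\Fsh$-split rather than $F$-split. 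Neither point affects your argument.
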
  
\begin{proof} This is clear from the proof of Proposition \ref{ACLF}, Section \ref{CSCLF}, and Lemma \ref{charQSCLF}. 
\end{proof}
Let $\sigma \in \Gal(\Fsh/F)$ be as in Section \ref{etaledescent}. Let $\cF$ be a facet in $X_*(A)$. Then $\cF$ corresponds to a $\sigma$-stable facet $\tilde{\cF}$ in $X_*(S)$. Note that $\Del_m$ induces isomorphisms
\[\Gal(\Fsh/F) \cong \Gal(F_s/F)/I_F \cong \Gal(F_s'/F')/I_{F'} \cong \Gal(\FFsh/F').\] 
Let $\sigma' = \Del_m(\sigma)$ under this isomorphism.   Let $\tilde{\cF}' = \cA_m(\tilde\cF)$ and $\cF' =\tilde{\cF'}^{\sigma'}$.

\begin{proposition}\label{QSDP} The isomorphism 
\[\tp_m: \cP_{\tilde\cF} \times_{\fO_\Fsh} \fO_\Fsh/\fp_\Fsh^m \rightarrow \cP_{\tilde\cF'} \times_{\fO_{\FFsh}} \fO_{\FFsh}/\fp_{\FFsh}^m\]
has the property that $\sigma' \circ \tp_m = \tp_m \circ \sigma$. 
\end{proposition}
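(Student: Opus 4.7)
The plan is to trace Galois-equivariance through each ingredient used in the proof of Theorem \ref{QSP} and then invoke the uniqueness clause of Artin--Weil. Recall that $\tp_m$ is assembled from: (i) the Chai--Yu isomorphism $\cT^{(m)}\cong\cT'^{(m)}\times_{\psi_m^{-1}}\fO_\Fsh/\fp_\Fsh^m$ of Lemma \ref{torusCLF}; (ii) the root-subgroup-scheme isomorphisms $\cU_{a,\tilde\cF}^{(m)}\cong\cU_{a',\tilde\cF'}^{(m)}\times_{\psi_m^{-1}}\fO_\Fsh/\fp_\Fsh^m$ of Corollary \ref{unipotentCLF}; (iii) the resulting scheme isomorphism $\cX_{\tilde\cF}^{(m)}\cong\cX_{\tilde\cF'}^{(m)}\times_{\psi_m^{-1}}\fO_\Fsh/\fp_\Fsh^m$ which intertwines the two $\fO_\Fsh/\fp_\Fsh^m$-birational group laws; and (iv) Artin--Weil producing the extension to $\cP_{\tilde\cF}^{(m)}$ and $\cP_{\tilde\cF'}^{(m)}\times_{\psi_m^{-1}}\fO_\Fsh/\fp_\Fsh^m$. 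The task is to verify $\sigma$-equivariance in (i)--(iii), after which (iv) will automatically promote this to equivariance of $\tp_m$ itself.

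For (ii), the key input is the compatibility of Chevalley--Steinberg systems recorded in Section \ref{CSCLF}: the splittings $\{x_\alpha\}$ and $\{x'_{\alpha'}\}$ are chosen so that $\Phi_m$ is $\Del_m$-equivariant and $x'_{\gamma'(\alpha')}=\gamma'\circ x'_{\alpha'}\circ\gamma'^{-1}$ holds whenever $x_{\gamma(\alpha)}=\gamma\circ x_\alpha\circ\gamma^{-1}$ with $\gamma'=\Del_m(\gamma)$. Applied to the Frobenius $\sigma\in\Gal(\Fsh/F)$ (viewed inside $\Gal(F_s/F)/I_F$, with image $\sigma'$ on the $F'$-side), this forces $\sigma(\cU_{a,\tilde\cF})=\cU_{\sigma(a),\sigma(\tilde\cF)}=\cU_{\sigma(a),\tilde\cF}$ (using $\sigma$-stability of $\tilde\cF$), and analogously on the $F'$-side. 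Because the isomorphisms of Corollary \ref{unipotentCLF} are defined exactly by transporting these pinning data under $\psi_l$, they intertwine the $\sigma$- and $\sigma'$-permutations of the root-subgroup schemes; the only point requiring care is tracking this compatibility through the auxiliary $L_a$- and $L_{2a}$-structures of Case II, but $\Del_l$ matches the pair $(L_a,L_{2a})$ with $(L_{a'},L_{2a'})$ together with the chosen presentations $t^2+At+B=0$ and $t'^2+A't'+B'=0$ as used in Lemma \ref{rootgroupschemes}, so equivariance is preserved.

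For (i), the Chai--Yu isomorphism is canonical in the congruence datum $(\fO_F/\pi^l,\fO_K/\pi^l,\Gamma_{K/F},X_*(T))$; in particular it is functorial with respect to the Galois-module structure on $X_*(T)$. Since $\Del_l$ sends $\sigma$ to $\sigma'$ and, by Lemma \ref{charQSCLF}, matches the Galois actions on $X_*(T)$ and $X_*(T')$, this functoriality is exactly the statement that the induced isomorphism $\cT^{(m)}\cong\cT'^{(m)}\times_{\psi_m^{-1}}\fO_\Fsh/\fp_\Fsh^m$ commutes with $\sigma$ and $\sigma'$.

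Combining (i) and (ii), the scheme isomorphism in (iii) is $\sigma$-equivariant, and therefore the two birational group laws agree under $\sigma$. Since the generic fibre $G\times_F\Fsh$ carries its natural $\sigma$-action (matched with that on $G'\times_{F'}\FFsh$ by the compatible choice of cocycles encoded in $\fQ_m^c$), the gluing $\cY_{\tilde\cF}^{(m)}$ inherits equivariance as well. The main---and essentially only substantive---obstacle is then the Artin--Weil step: one needs to observe that the uniqueness part of Theorem 5.1 of \cite{BLR} (as applied in Proposition 5.17 of \cite{La00}) is natural with respect to automorphisms of the input data, so any automorphism of $\cY_{\tilde\cF}^{(m)}$ compatible with its birational group law extends uniquely to an automorphism of the resulting group scheme. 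Applying this naturality to the automorphism $\sigma'\circ\tp_m\circ\sigma^{-1}\circ\tp_m^{-1}$, which restricts to the identity on $\cY_{\tilde\cF'}^{(m)}\times_{\psi_m^{-1}}\fO_\Fsh/\fp_\Fsh^m$ by the equivariance just established, yields $\sigma'\circ\tp_m=\tp_m\circ\sigma$, completing the proof.
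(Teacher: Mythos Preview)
Your proof is correct and follows essentially the same strategy as the paper: verify $\sigma$-equivariance separately on the torus piece $\cT^{(m)}$ and on the root-subgroup pieces $\cU_{a,\tilde\cF}^{(m)}$ (the latter via the compatible choice of Chevalley--Steinberg systems in Section~\ref{CSCLF}), and then conclude for the whole of $\cP_{\tilde\cF}^{(m)}$. The paper's proof is considerably terser; in particular it leaves the passage from $\cX_{\tilde\cF}^{(m)}$ to $\cP_{\tilde\cF}^{(m)}$ implicit, whereas you spell out explicitly that this follows from the uniqueness clause in Artin--Weil applied to $\sigma'\circ\tp_m\circ\sigma^{-1}\circ\tp_m^{-1}$. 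One minor difference: for the torus part, the paper argues directly that $\cT$ is already defined over $\fO_F$ (so $\cT_{\fO_\Fsh}=\cT\times_{\fO_F}\fO_\Fsh$ carries its $\sigma$-action by base change), rather than appealing to the abstract canonicity of the Chai--Yu isomorphism as you do; both arguments are valid.
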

\begin{proof} Recall that the cocycle $s_{G}$ has been chosen to take values in $\Aut(H)$ and $s_{G}\rightarrow s_{G'}$ via  Lemma \ref{QSF}. Further, $\cT$ is defined over $\fO_F$ and $\cT_{\fO_\Fsh} = \cT \times_{\fO_F} \fO_\Fsh$. From this it is clear that $\sigma' \circ \tp_m = \tp_m \circ \sigma$ on $\cT \times_{\fO_\Fsh} \fO_\Fsh/\fp_\Fsh^m$.   In addition, using the fact that Chevalley-Steinberg systems on $G$ and $G'$ have been chosen compatibly (see Section \ref{CSCLF}),  it is easy to see that $\sigma' \circ \tp_m = \tp_m \circ \sigma$ on $\cU_{\tilde\cF} \times_{\fO_\Fsh} \fO_\Fsh/\fp_\Fsh^m$. This completes the proof of the proposition. 
\end{proof}

\section{Inner forms of quasi-split groups over close local fields} Let $F$ be a non-archimedean local field and let $G$ be a connected reductive group over $F$. Then there is a quasi-split group $G_{q}$ defined over $F$ such that $G$ is an inner form of $G_{q}$. In particular, the $F$-isomorphism class of $G$ is determined by an element in $H^1(\Gamma_F, G_{q}^{ad}(F_s))$. Moreover if $[G] \in E(F, G_0)_m$ then $[G_{q}] \in E(F, G_0)_m$ and $[G]$ is determined by an element of $H^1(\Aut(\Omega/F), G_{q}^{ad}(\Omega))$ (Recall that $\Omega = (F_s)^{I_F^m}$). Let $s_{G_q}$ be the element of $H^1(\Gamma_F/I_F^m, \Aut(R, \Delta)^{I_F^m})$ that determines $(G_q, B_q, T_q)$, up to $F$-isomorphisms. Let $G_{q}^{der}$ be the derived subgroup of $G_q$ and let $G_q^{ad}, G_q^{sc}$ denote the corresponding adjoint and simply connected groups. 
Then the groups $G_q^{der}$, $G_q^{ad}, G_q^{sc}$ are quasi-split (if $S_q$ is a maximal $F$-split torus in $G_q$ whose centralizer $T_q$ is a maximal torus, then $S_q \cap G_q^{der}$ is a maximal $F$-split torus of $G_q^{der}$ and $Z_{G_q^{der}}(S_q \cap G_q^{der}) = T_q \cap G_q^{der}$ is a maximal torus of $G_q^{der}$, similarly for $G_q^{ad}$ and $G_q^{sc}$) and are in fact forms of $G_0^{der}$, $G_0^{ad}$ and $G_0^{sc}$ respectively (to see this note that $G_q^{der} \times_F \Omega \cong (G_q\times_F \Omega)^{der}$ and $Z(G_q)\times_F \Omega \cong Z(G_q \times_F \Omega)$).  
 Using Proposition 13.1 (1) of \cite{Kot16} and the fact that $G_q^{ad}$ has trivial center, we have a canonical bijection \[ \kappa_{G_q}: H^1(\Aut(\Omega/F), G_{q}^{ad}(\Omega)) \rightarrow \left(X_*(T_q^{ad})/X_*(T_q^{sc})\right)_{\Aut(\Omega/F)}.\]
Let $E_i(F, G_q)_m$ denote the $F$-isomorphism classes of inner forms of $G_q$ that split over an at most $m$-ramified extension of $F$. Let $(G_q', B_q', T_q')$ correspond to the cocycle $q' \circ \fQ_m(s_{G_q})$ and let $E_i(F', G_q')_m$ be the corresponding object over $F'$. 
\begin{lemma}\label{IF} The congruence data $D_m$ induces an isomorphism
\[ \fI_m: \left(X_*(T_q^{ad})/X_*(T_q^{sc})\right)_{\Aut(\Omega/F)}\xrightarrow{\cong} \left(X_*({T'}_q^{ad})/X_*({T'}_q^{sc})\right)_{\Aut(\Omega'/F')}.\]
In particular, $D_m$ induces a bijection $E_i(F, G_q)_m \rightarrow E_i(F', G_q')_m$, $[G] \rightarrow [G']$ where $s_{G'} =\kappa_{G_q'}^{-1}\circ \fI_m\circ \kappa_{G_q}(s_{G})$. 
\end{lemma}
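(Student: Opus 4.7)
The strategy is to identify both sides of $\fI_m$ with a single ``universal'' module attached to the root datum $R$ and then transport the Galois action through the Deligne isomorphism; once this is done, the bijection on inner forms is immediate from Kottwitz's theorem.

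First I would make the cocharacter lattices intrinsic. Since $(G_q, B_q, T_q)$ is obtained by twisting the pinned $\bbZ$-group $(G_0, B_0, T_0, \{u_\alpha\}_{\alpha \in \tilde\Delta})$ by $s_{G_q} \in Z^1(\Gal(\Omega/F), H)$, the same cocycle (via the natural $H$-actions on $G_0^{ad}$ and $G_0^{sc}$) produces $(G_q^{ad}, T_q^{ad})$ and $(G_q^{sc}, T_q^{sc})$, together with canonical identifications of abelian groups $X_*(T_q^{ad}) = X_*(T_0^{ad})$ and $X_*(T_q^{sc}) = X_*(T_0^{sc})$ on which the $\Gal(\Omega/F)$-action is given by $s_{G_q}$ composed with the natural $\Aut(R, \tilde\Delta)$-action, exactly as in the proof of Lemma \ref{charQSCLF}. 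The quotient $M := X_*(T_0^{ad})/X_*(T_0^{sc})$ is therefore an intrinsic $\Aut(R, \tilde\Delta)$-module depending only on $R$.

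Next I would transport via $D_m$. By Lemma \ref{QSF}, $\fQ_m^c$ sends $s_{G_q}$ to the cocycle $s_{G_q'}$ determining $(G_q', B_q', T_q')$, with both cocycles taking values in the common subgroup $\Aut(R, \tilde\Delta)^{I_F^m}$ and with the $\Del_m$-equivariance built in by construction. It follows that the $\Gal(\Omega/F)$-action on $M$ coming from $s_{G_q}$ is intertwined, along $\Del_m$, with the $\Gal(\Omega'/F')$-action on $M$ coming from $s_{G_q'}$. Taking coinvariants of the identity on $M$ then yields the desired canonical isomorphism
\[\fI_m: \left(X_*(T_q^{ad})/X_*(T_q^{sc})\right)_{\Gal(\Omega/F)} \xrightarrow{\cong} \left(X_*({T'}_q^{ad})/X_*({T'}_q^{sc})\right)_{\Gal(\Omega'/F')}.\]
For the ``in particular'' statement, observe that $E_i(F, G_q)_m$ is parametrized by $H^1(\Gal(\Omega/F), G_q^{ad}(\Omega))$, which Kottwitz identifies with the left-hand side via $\kappa_{G_q}$, and analogously on the primed side. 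Composing gives the prescribed rule $s_{G'} := \kappa_{G_q'}^{-1}\circ \fI_m \circ \kappa_{G_q}(s_G)$, which is a bijection since each factor is.

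The main point to verify carefully is the compatibility of Kottwitz's bijection with the transport of structure provided by $D_m$ in the final step. This reduces to the observation that $\kappa_{G_q}$ from \cite{Kot16} is built purely from the based root datum $(R, \tilde\Delta)$, from the $\Aut(R, \tilde\Delta)$-action on $X_*(T_0^{ad})/X_*(T_0^{sc})$, and from the Galois action on this module determined by $s_{G_q}$ — all of which are intrinsic to the pinned pair $(G_0, H)$ and are preserved by $D_m$. Hence the naturality of $\kappa$ in these data is essentially formal, and the only non-trivial input is the correspondence of cocycles from Lemma \ref{QSF}, which is already available.
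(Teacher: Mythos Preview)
Your proposal is correct and follows essentially the same approach as the paper: identify $X_*(T_q^{ad})$, $X_*(T_q^{sc})$ with $X_*(T_0^{ad})$, $X_*(T_0^{sc})$ as $\bbZ$-modules with Galois action determined by the cocycle $s_{G_q}$, then invoke Lemma~\ref{QSF} to match the actions via $\Del_m$ and take coinvariants. The paper compresses all of this into two sentences and treats the ``in particular'' as immediate from the definition of $\fI_m$ together with the Kottwitz bijection $\kappa_{G_q}$ recalled just before the lemma; your more explicit unpacking, including the naturality remark on $\kappa$, is a faithful elaboration rather than a different argument.
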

\begin{proof}
Note that $X_*(T_q) \cong X_*(T_0) \cong X_*(T_q')$ as $\bbZ$-modules  and the Galois action on $X_*(T_q)$ is determined by the cocycle $s_{G_q}$ (and similarly for $X_*(T_q^{ad}), X_*(T_q^{sc})$). Now the lemma is obvious by Lemma \ref{QSF}.
\end{proof}
To proceed, we need to prove a version of Lemma \ref{IF} at the level of cocycles. To do this, we will use some results from Section 2 of \cite{DR}.
\subsection*{Steinberg's vanishing theorem} Let $G$ be a connected, reductive $F$-group. Steinberg's vanishing theorem asserts that
\begin{theorem}[Theorem 56, \cite{Ste65}]\label{SVT} $H^1(\Gal(F_s/\Fu), G(F_s))=1.$
\end{theorem}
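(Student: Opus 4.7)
The plan is to reduce this to Serre's ``Conjecture~I'' (proved by Steinberg and others): $H^1(k,G) = 1$ for any connected linear algebraic $k$-group $G$, whenever $k$ is a perfect field of cohomological dimension at most $1$. Since the theorem in the excerpt is merely cited from \cite{Ste65}, I would not reprove it here from scratch, but would indicate how vanishing over $\Fu$ fits this framework and which of the standard reduction steps are involved.

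First I would verify the cohomological hypothesis on $\Fu$. Its residue field is $\bar{\kappa}_F$, the algebraic closure of $\kappa_F$ (using that $\kappa_F$ is perfect), and by the computations in Serre's \emph{Cohomologie Galoisienne} (Chapter~II), a henselian discretely valued field with algebraically closed residue field has cohomological dimension at most $1$. Next I would carry out the usual d\'evissage. The exact sequence $1 \to R(G) \to G \to G/R(G) \to 1$, combined with Hilbert~90 for $\Fu$-tori and the cd bound, reduces to the semisimple case; then the simply connected cover $1 \to \pi_1(G^{der}) \to G^{sc} \to G^{der} \to 1$, combined with vanishing of the relevant $H^2$ of the (finite) fundamental group module, reduces further to the semisimple simply connected case.

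For semisimple simply connected $G$ the argument is finished by Steinberg's Bruhat-decomposition trick: every cohomology class is represented by a cocycle taking values in a maximal $\Fu$-torus, whose $H^1$ then vanishes by Hilbert~90. In the present Bruhat--Tits framework it would be even more natural to conclude directly via parahoric group schemes: since $G_{\Fu}$ is quasi-split, there is a hyperspecial parahoric $\cP$ whose special fibre is a connected reductive group over $\bar{\kappa}_F$, and then Lang's theorem applied to this special fibre, together with smoothness of $\cP$ and the usual lifting argument, kills the cohomology. The main obstacle in executing such a plan from scratch is the careful handling of the $p$-cohomological dimension when $\mathrm{char}(\bar{\kappa}_F) = p$; Steinberg sidesteps this in \cite{Ste65} by working with density of regular semisimple elements and Bruhat cells, rather than invoking a uniform cd-statement, which is precisely what makes the theorem hold in the stated generality.
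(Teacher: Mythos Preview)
The paper does not prove this statement at all; it simply cites Theorem~56 of \cite{Ste65} and immediately uses it. You correctly anticipated this, and your sketch of the standard reduction (cd$\,\le 1$ for $\Fu$, d\'evissage to the semisimple simply connected case, then Steinberg's regular-element argument) is an accurate summary of how the literature proves it.

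One small correction to your Bruhat--Tits aside: quasi-splitness of $G_{\Fu}$ does \emph{not} guarantee a hyperspecial parahoric (that needs $G$ to split over an unramified extension of $\Fu$, which over $\Fu$ means split outright; a ramified quasi-split unitary group is a counterexample). Moreover, Lang's theorem is a statement about connected groups over \emph{finite} fields, whereas the residue field of $\Fu$ is $\bar\kappa_F$, algebraically closed; over such a field torsors under smooth connected groups are trivial for the much simpler reason that there is no Galois group to speak of. The Bruhat--Tits style proof you have in mind (due essentially to Bruhat--Tits--Rousseau) runs instead via a fixed-point argument on the building and smoothness/Hensel lifting for the stabilizer group scheme, with triviality at the special fiber coming for free from algebraic closedness rather than from Lang.
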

As a corollary of this theorem, we obtain that the natural surjection from $\Gal(F_s/F) \rightarrow \Gal(\Fu/F)$ induces an isomorphism
\[H^1(\Gal(\Fu/F), G(\Fu) )\cong H^1(\Gal(F_s/F), G(F_s)).\]

\subsection{Congruence data for inner forms; a comparison of cocycles}\label{IFCCLF} Let $A_q$ be a maximal $F$-split torus in $G_q$ and let $S_q$ be a maximal $\Fu$-split  $F$-torus in $G_q$ that contains $A_q$. Let $T_q = Z_{G_q}(S_q)$. Then $T_q$ is a maximal torus in $G_{q,\Fu}$ with maximal $\Fu$-split torus $S_q$. Let $C_q$ be an $\sigma$-stable alcove in $\cA(S_q,\Fu)$. 

Let $P_{C_q}$ be the Iwahori subgroup of $G_q^{ad}(\Fu)$ attached to $C_q$. Let $\Omega_{C_q}^{ad}\subset \tilde{W}^{ad}:= X_*(T_q^{ad})_{I_F} \rtimes W$ consist of elements which preserve the alcove $C_q$. Here $I_F$ is the inertia subgroup of $F$ and $W = W(G_{q,\Fu},S_{q,\Fu})$. Then
\begin{align}\label{omega}
\Omega_{C_q}^{ad} \cong \left(X_*(T_q^{ad})/X_*(T_q^{sc})\right)_{I_F}
\end{align}
by Lemma 15 of \cite{HR08}. Let $P_{C_q}^*$ be the normalizer in $G_q^{ad}$ of $P_{C_q}$.  Let $N_{C_q}^{ad}=N_{G_q^{ad}}(S_q^{ad})(\Fu) \cap P_{C_q}^*$. Then $\Omega_{C_q}^{ad}$ is the image of $N_{C_q}^{ad}$ in $\tilde{W}^{ad}$ and $\Omega_{C_q}^{ad} \cong P_{C_q}^*/P_{C_q}$. 

The following lemma is proved in Sections 2.3 and 2.4 of \cite{DR}. Although the authors assume that $G_{q,\Fu}$ is split in the beginning of Section 2.3 of \cite{DR}, this assumption is not needed in their proof of the following lemma. They use that when $G_{q,\Fu}$ is split, $\Omega_{C_q}^{ad} \cong X_*(T_q^{ad})/X_*(T_q^{sc})$ in Corollary 2.4.2 and Corollary 2.4.3; one should instead use \eqref{omega} when $G_{q,\Fu}$ is not necessarily split.

\begin{lemma}[Corollary 2.4.3, \cite{DR}] We have isomorphisms
\[H^1(\Gal(\Fu/F), \Omega_{C_q}^{ad}) \cong H^1(\Gal(\Fu/F), N_{C_q}^{ad} )\cong H^1(\Gal(\Fu/F), G_q^{ad}(\Fu)).\]
\end{lemma}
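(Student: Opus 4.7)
The plan is to derive both isomorphisms from the short exact sequence of $\Gal(\Fu/F)$-groups
\[1 \longrightarrow P_{C_q} \longrightarrow N_{C_q}^{ad} \longrightarrow \Omega_{C_q}^{ad} \longrightarrow 1,\]
which comes from the identification $\Omega_{C_q}^{ad} \cong P_{C_q}^{*}/P_{C_q}$ together with the surjectivity $N_{C_q}^{ad} \twoheadrightarrow \Omega_{C_q}^{ad}$ (every element of $\tilde{W}^{ad}$ fixing $C_q$ admits a representative in $N_{G_q^{ad}}(S_q^{ad})(\Fu)$). The cornerstone is the vanishing $H^1(\Gal(\Fu/F), P_{C_q}) = 1$, which I would establish by Lang--Steinberg: $P_{C_q}$ is the $\fO_{\Fsh}$-points of a smooth affine $\fO_{\Fsh}$-group scheme with geometrically connected special fiber (by the very definition of parahoric), so Frobenius acts transitively on the orbits and the unramified non-abelian $H^1$ vanishes.

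For the first isomorphism, the long exact sequence of non-abelian cohomology then gives that $H^1(\Gal(\Fu/F), N_{C_q}^{ad}) \to H^1(\Gal(\Fu/F), \Omega_{C_q}^{ad})$ has trivial kernel; injectivity as a map of pointed sets follows by the standard twisting argument (for each cocycle $z$ representing a class, twist the short exact sequence by $z$ and re-apply the vanishing of $H^1(P_{C_q})$). Surjectivity is also forced by $H^1(P_{C_q}) = 1$: using that $\Gal(\Fu/F) \cong \hat{\bbZ}$ is procyclic, one lifts any class in $H^1(\Omega_{C_q}^{ad})$ through a set-theoretic section and checks that the cocycle obstruction in $P_{C_q}$ can be killed by a coboundary.

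For the second isomorphism, the map induced by $N_{C_q}^{ad} \hookrightarrow G_q^{ad}(\Fu)$ is the one to analyze. Injectivity reduces to the same twisting argument plus the vanishing of $H^1(P_{C_q})$. Surjectivity is the heart of the matter: given $[z] \in H^1(\Gal(\Fu/F), G_q^{ad}(\Fu))$, consider the $z$-twisted Frobenius action on the reduced building $\cB(G_q^{ad}, \Fsh)$; since this action preserves the poly-simplicial structure and has bounded orbits, the Bruhat--Tits fixed point theorem provides a fixed alcove, and transitivity of $G_q^{ad}(\Fu)$ on alcoves lets us modify $z$ within its cohomology class so that the fixed alcove is $C_q$, i.e., $z_\sigma \in P_{C_q}^{*}$. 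A final adjustment using $H^1(P_{C_q}) = 1$ moves $z_\sigma$ inside $N_{C_q}^{ad}$.

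The principal obstacle is the surjectivity step just described, and it is precisely here that one must verify that the DeBacker--Reeder argument survives without the split hypothesis. The only place their proof invokes splitness is in identifying $\Omega_{C_q}^{ad}$ with the full quotient $X_*(T_q^{ad})/X_*(T_q^{sc})$; for quasi-split $G_{q,\Fu}$ this must be replaced by the inertia-coinvariants $\left(X_*(T_q^{ad})/X_*(T_q^{sc})\right)_{I_F}$ via \eqref{omega}. Since every other ingredient---the Lang--Steinberg vanishing for the Iwahori, the Bruhat--Tits fixed point theorem, and the $\Gal(\Fu/F)$-equivariant exact sequence displayed above---is insensitive to whether $G_{q,\Fu}$ is split, the substitution is cosmetic and the argument of \cite{DR} transfers intact.
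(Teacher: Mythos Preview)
Your approach is essentially the one the paper invokes: it cites \cite{DR} verbatim and only remarks that the sole use of splitness in their argument is the identification of $\Omega_{C_q}^{ad}$, which must be replaced by \eqref{omega} in the quasi-split case---exactly the observation you make in your final paragraph. So at the level of strategy you are aligned with the paper.

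There is, however, a slip in your short exact sequence. You write
\[1 \longrightarrow P_{C_q} \longrightarrow N_{C_q}^{ad} \longrightarrow \Omega_{C_q}^{ad} \longrightarrow 1,\]
but $P_{C_q}$ is the full Iwahori of $G_q^{ad}(\Fu)$ and is certainly not contained in $N_{C_q}^{ad}=N_{G_q^{ad}}(S_q^{ad})(\Fu)\cap P_{C_q}^{*}$. The kernel of $N_{C_q}^{ad}\twoheadrightarrow\Omega_{C_q}^{ad}$ is rather $N_{C_q}^{ad}\cap P_{C_q}$, which equals $T_q^{ad}(\Fu)_1=\cT_q^{ad}(\fO_{\Fsh})$, the connected N\'eron model points of the adjoint torus. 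Fortunately this does not damage the argument: $\cT_q^{ad}(\fO_{\Fsh})$ is again the $\fO_{\Fsh}$-points of a smooth affine group scheme with connected fibers, so the Lang--Steinberg vanishing you invoke applies equally well, and the rest of your outline (twisting for injectivity, Bruhat--Tits fixed point plus transitivity on alcoves for surjectivity of the second map) goes through unchanged. Just correct the displayed sequence and the justification you give for it.
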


Let $c$ be a cocycle in  $Z^1(\Gal(\Fu/F), \Omega_{C_q}^{ad})$. By Lemma 2.1.2 of \cite{DR}, since $\Omega_{C_q}^{ad}$ is finite, we have
\[Z^1(\Gal(\Fu/F), \Omega_{C_q}^{ad}) = \Omega_{C_q}^{ad}.\]
Let $G$ be the inner form of $G_q$ determined by $c$. Let $c(\sigma)=w_\sigma$. Write $w_\sigma = (\lambda,w)$ with $\lambda \in X_*(T^{ad})_{I_F}$ and $w\in W$. Let $K \subset F_s$ denote the finite atmost $m$-ramified extension of $\Fu$ over which $G_{q,\Fu}$ splits. Let $t= Nm(\tilde\lambda(\pi_K))$ where $Nm: T_q^{ad}(K) \rightarrow T_q^{ad}(\Fu)$ and $\tilde \lambda \rightarrow \lambda$ under the usual surjection $X_*(T_q^{ad}) \rightarrow X_*(T_q^{ad})_I$. Let $\tw \in N_{G_q}(S_q)(\Fu)$ be the representative of $w$ chosen using the Chevalley-Steinberg system we fixed in Section \ref{CSCLF}.  

Let $m_\sigma =t\tw$. Since $w_\sigma$ stabilizes $C_q$, it follows that $m_\sigma P_{C_q} m_\sigma^{-1} = P_{C_q}$. Hence $m_\sigma \in P_{C_q}^*$. 
Therefore $\tc(\sigma)=m_\sigma \in Z^1(\Gal(\Fu/F), N_{C_q}^{ad}).$  
Denoting \[G(\Fu)\rightarrow G_q(\Fu), g_*\rightarrow g,\] the new  action of $\sigma$ on an element $g_*\in G(\Fu)$, which we denote by $\sigma_*$, is given by \[\sigma_* \cdot g_* = (\tc(\sigma)(\sigma \cdot g))_*\] (Here $\sigma \cdot g$ denotes the action of $\sigma$ on $g \in G_q(\Fu)$). Note that $c(\sigma) \in G_q^{ad}(\Fu) = Inn(G_q)(\Fu)$. The maximal $\Fu$-split torus $S_q$ of $G_q$ gives a maximal $\Fu$-split, $\Fu$-torus $S$ in $G$. Let $X_*(S) \rightarrow X_*(S_q), \tau_* \rightarrow \tau$.
For $\tau_* \in X_*(S)$, $\sigma_* \cdot \tau_* = (w_\sigma(\sigma \cdot \tau))_*$. Since $S_q$ is defined over $F$, $\sigma \cdot \tau \in X_*(S_q)$. Since $w_\sigma \in \Omega_{C_q}^{ad}$, we see that $X_*(S)$ is stable under the action of $\sigma$, and hence $S$ is defined over $F$.

\begin{lemma} \label{Fsplit}Let $A$ be the $F$-split torus of $G$ determined by the $\bbZ$-module $X_*(S)^{\sigma_*}$. Then $A$ is a maximal $F$-split torus in $G$.
\end{lemma}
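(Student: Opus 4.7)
The plan is to combine the fact that the inner twisting preserves the $\Fu$-structure with the standard structural theorem from Bruhat-Tits theory that relates maximal $F$-split and maximal $\Fu$-split $F$-tori.

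First, I would observe that since the cocycle $c$ takes values in $G_q^{ad}(\Fu)$, the identification $G(\Fu)\to G_q(\Fu),\; g_*\mapsto g,$ is an isomorphism of $\Fu$-groups. Consequently $S$, viewed as an $\Fu$-subgroup of $G$ via this identification with $S_q\subset G_q$, inherits the property of being a maximal $\Fu$-split torus of $G_{\Fu}$ from the corresponding property of $S_q$ in $G_{q,\Fu}$. Combined with the $F$-rationality of $S$ established in the preceding paragraph, this shows that $S$ is a maximal $\Fu$-split $F$-torus of $G$.

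Next I would check that $A$ really is the maximal $F$-split subtorus of $S$. By the formula $\sigma_*\cdot\tau_* = (w_\sigma(\sigma\cdot\tau))_*$, the sublattice $X_*(S)^{\sigma_*}\subset X_*(S)$ is precisely the set of cocharacters of $S$ fixed by the twisted $\Gal(\Fu/F)$-action, and since $\sigma$ topologically generates $\Gal(\Fu/F)$, this is the full $\Gal(\Fu/F)$-invariant sublattice for the $F$-structure of $S$ inside $G$. Under the anti-equivalence between $F$-split tori and $\Gal(F_s/F)$-fixed sublattices, this invariant sublattice corresponds to the maximal $F$-split subtorus of $S$, which is $A$ by definition.

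The main content is then to invoke the standard result of Bruhat-Tits (cf.\ Section 5.1 of \cite{BT2}): in any connected reductive $F$-group, the maximal $F$-split subtorus of a maximal $\Fu$-split $F$-torus is itself a maximal $F$-split torus of the ambient group. Applying this to $S\subset G$ yields that $A$ is maximal $F$-split in $G$, completing the proof. The main obstacle, such as it is, lies in the careful bookkeeping of the twisted Galois action across the identifications $X_*(S)\leftrightarrow X_*(S_q)$ and $G(\Fu)\leftrightarrow G_q(\Fu)$; once one checks that the formula for $\sigma_*$ on $X_*(S)$ genuinely describes the $F$-rational structure inherited by $S$ as a subtorus of $G$, the rest is a direct appeal to the Bruhat-Tits dictionary.
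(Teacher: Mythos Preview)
The ``standard result'' you invoke in the third paragraph is false as stated, so the argument does not go through. The result in \cite[\S 5.1]{BT2} runs in the opposite direction: given a maximal $F$-split torus $A$, one can find a maximal $\Fu$-split $F$-torus $S$ containing it. The converse --- that \emph{every} maximal $\Fu$-split $F$-torus has maximal $F$-split part which is maximal $F$-split in $G$ --- fails already for $G=\GL_2$ over $F=\bbQ_p$: take $T=\Res_{E/\bbQ_p}\bbG_m\hookrightarrow\GL_2$ with $E/\bbQ_p$ unramified quadratic. Then $T$ is a maximal torus defined over $\bbQ_p$ and split over $\Fu$, yet its maximal $\bbQ_p$-split subtorus is the one-dimensional center, whereas the $\bbQ_p$-rank of $\GL_2$ is $2$. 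In building language, $\sigma$ acts on the apartment $\cA(T,\Fu)$ (a line) by reflection, so the fixed set is a point rather than a full apartment.

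This is exactly why the paper's proof is not a bookkeeping exercise but uses the specific feature of the chosen cocycle: since $c(\sigma)=w_\sigma\in\Omega_{C_q}^{ad}$ and $C_q$ is $\sigma$-stable, the twisted Frobenius $\sigma_*$ stabilizes the alcove $C_*\subset\cA(S,\Fsh)$. One then appeals to \cite[Prop.~5.1.14]{BT2} to see that $C_*^{\sigma_*}$ is an alcove in $\cA(A,F)$, forcing $\cA(A,F)$ to have maximal dimension and hence $A$ to be maximal $F$-split. Your argument discards precisely the hypothesis (that the cocycle preserves an alcove) that makes the conclusion true; without it, the torus $S$ produced by an arbitrary inner twist need not contain a maximal $F$-split torus of $G$.
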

\begin{proof}
Consider the reduced apartment $\cA(S_q,\Fsh)$. We view this as an apartment in the reduced building of $G(\Fsh)$ and denote it as $\cA(S, \Fsh)$. The action $\sigma_*$ on $x_* \in \cA(S, \Fsh)$ given by $\sigma_* \cdot x_* = (w_\sigma(\sigma \cdot x))_*$. Let $C_*$ denote the alcove in $\cA(S, \Fsh)$ corresponding to $C_q$. Then $\sigma_*\cdot C_* = (w_\sigma( \sigma \cdot C_q))_*$. Since $\sigma \cdot C_q = C_q$ and  since $w_\sigma \in \Omega_{C_q}^{ad}$, we see that $C_*$ is a $\sigma_*$-stable alcove in  $\cA(S,\Fsh)$. In particular, $\cA(S, \Fsh)$ is $\sigma_*$-stable. By Proposition 5.1.14 of \cite{BT2}, $C_*^{\sigma_*}$ is an alcove in the affine space $\cA(A, F)$. Since $\cA(A,F)$ contains a facet of maximal possible dimension, we see that $A$ is maximal $F$-split in $G$. 
\end{proof}

Let $(G_q', T_q', B_q', S_q')$  correspond to $(G_q,T_q,B_q,S_q)$ via congruence data $D_m$ as in Section \ref{QS}. By Lemma \ref{charQSCLF}, we have
\[\Omega_{C_q}^{ad} \cong \Omega_{C_q'}^{ad}.\]
Let $w_{\sigma'} \in \Omega_{C_q'}^{ad}$ be the image of $w_\sigma$ under this isomorphism. 
This isomorphism gives rise to a bijection of pointed sets
\begin{align}\label{matchco}
\fI_m: Z^1(\Gal(\Fu/F), \Omega_{C_q}^{ad}) &\rightarrow Z^1(\Gal(\FFu/F'), \Omega_{C_q'}^{ad}),\nonumber \\
c &\rightarrow c'
\end{align}
where $c'(\sigma') = w_{\sigma'}$. 
Let $m_{\sigma'} = t'\tw'$ where $w_{\sigma'} = (\lambda', w') \in X_*(T_q^{ad})_{I_{F'}} \rtimes W'$. Here $t'= Nm(\tilde\lambda'(\pi'_{K'}))$ where $Nm: T_q^{ad'}(K') \rightarrow T_q^{ad'}(\FFu)$ and $\tilde \lambda' \rightarrow \lambda'$ under the usual surjection $X_*(T_q^{ad'}) \rightarrow X_*(T_q^{ad'})_{I_{F'}}$, and $\tilde\lambda \rightarrow \tilde\lambda'$ under the isomorphism $X_*(T_q^{ad}) \cong X_*(T_q^{ad'})$. Also $\tw'$ is the representative of $w$ chosen using the Chevalley-Steinberg system fixed in Section \ref{CSCLF}. 
Let $\tc' \in Z^1(\Gal(\FFu/F'), N_{C_q'}^{ad'})$ be the cocycle with $\tc'(\sigma') = m_{\sigma'}$.

Let $G'$ be the inner form of $G_q'$ determined by $c'$ (or $\tc'$). Let $S'$ be the maximal $\FFu$-split, $\Fu$-torus of $G'$ corresponding to $S_q'$ but with the action of $\sigma'$ given by the cocycle $\tc'$. More precisely, for $g_*' \in G'(\FFu)$, 
\[\sigma'_* \cdot g'_* = (\tc'(\sigma') \cdot (\sigma' \cdot g'))_*\]
where $\sigma' =\Del_m(\sigma)$ as before, and $\sigma'\cdot g'$ denotes the action of $\sigma'$ on $G_q'(\FFu)$. 

  As in Lemma \ref{Fsplit}, we see that $S'$ is an $F'$-torus that is maximal $\FFu$-split and whose split component $A'$ is a maximal $F'$-split torus in $G'$. 
\begin{corollary} With $G \rightarrow G'$ as above, the $F$-rank of $G$ is equal to the $F'$-rank of $G'$. 
\end{corollary}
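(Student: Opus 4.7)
The plan is to show that the $\bbZ$-modules $X_*(S)^{\sigma_*}$ and $X_*(S')^{\sigma'_*}$ are isomorphic, since by construction these are exactly the co-character lattices of the maximal $F$-split torus $A$ of $G$ and the maximal $F'$-split torus $A'$ of $G'$; the $F$-rank of $G$ and the $F'$-rank of $G'$ are then their $\bbZ$-ranks.

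First, I would invoke Lemma \ref{charQSCLF} applied to the quasi-split data $(G_q,B_q,T_q,S_q)$ and $(G_q',B_q',T_q',S_q')$ to get a canonical isomorphism $X_*(S_q) \cong X_*(S_q')$ which is equivariant for the Galois action relative to $\Del_m$; restricting the $\Gal(\Fu/F)$-action to the cyclic subgroup generated by $\sigma$ gives an identification of the $\sigma$-action on $X_*(S_q)$ with the $\sigma'$-action on $X_*(S_q')$, where $\sigma' = \Del_m(\sigma)$. As abelian groups we have $X_*(S) = X_*(S_q)$ and $X_*(S') = X_*(S_q')$; only the Galois actions differ, namely by the cocycles $c$ and $c'$.

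Next, by the cocycle matching \eqref{matchco}, the element $w_\sigma \in \Omega_{C_q}^{ad}$ corresponds to $w_{\sigma'} \in \Omega_{C_q'}^{ad}$ under the isomorphism $\Omega_{C_q}^{ad} \cong \Omega_{C_q'}^{ad}$ induced by $D_m$, and the action of $\Omega_{C_q}^{ad}$ on $X_*(S_q)$ (through $W$) is transported to the action of $\Omega_{C_q'}^{ad}$ on $X_*(S_q')$ by the same isomorphism. Recalling from the construction preceding Lemma \ref{Fsplit} that the twisted action is
\[
\sigma_* \cdot \tau_* = (w_\sigma(\sigma \cdot \tau))_*, \qquad \sigma'_* \cdot \tau'_* = (w_{\sigma'}(\sigma'\cdot \tau'))_*,
\]
the compatibilities above show that the isomorphism $X_*(S) \cong X_*(S')$ is equivariant for the actions of $\sigma_*$ and $\sigma'_*$. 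Taking fixed points yields $X_*(S)^{\sigma_*} \cong X_*(S')^{\sigma'_*}$, and comparing $\bbZ$-ranks gives the equality of the $F$-rank of $G$ and the $F'$-rank of $G'$.

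I do not expect any serious obstacle here: the content of the corollary is essentially a bookkeeping of the fact that both the inner-twisting cocycle and the relative Weyl group action are transported faithfully by the congruence data $D_m$, which is exactly what Lemma \ref{charQSCLF}, Lemma \ref{IF}, and the cocycle-level refinement \eqref{matchco} have already set up. The only point that requires a little care is to make sure one uses the cocycle-level correspondence (so that $w_\sigma$ itself, not just its cohomology class, maps to $w_{\sigma'}$); this is precisely why Section \ref{IFCCLF} was developed.
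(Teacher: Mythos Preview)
Your proposal is correct and follows essentially the same approach as the paper: the paper's proof simply records that $\rank(S)=\rank(S')$ and that the isomorphism $X_*(S)\to X_*(S')$ is $\sigma_*$-equivariant, then invokes Lemma \ref{Fsplit}; you have spelled out precisely why that equivariance holds (matching of the underlying $\sigma$-action via Lemma \ref{charQSCLF} together with the cocycle-level matching $w_\sigma\mapsto w_{\sigma'}$ from \eqref{matchco}), which is exactly the content the paper leaves implicit.
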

\begin{proof} This is because $\rank(S) = \rank(S')$ and the isomorphism $X_*(S) \rightarrow X_*(S')$ is $\sigma_*$-equivariant. Hence $\rank(A) = \rank(A')$ by Lemma \ref{Fsplit}. 
\end{proof}
\section{Congruences of parahoric group schemes; \'{e}tale descent}\label{PGSET}
The following lemma is easy.
 \begin{lemma}\label{IFACLF} The $\sigma$-equivariant isomorphism $\tilde\cA_{m}: \cA(S_q, \Fsh) \rightarrow \cA(S_q', \FFsh)$ induces a $\sigma_*$-equivariant isomorphism $\tilde\cA_{m,*}: \cA(S, \Fsh) \rightarrow \cA(S', \FFsh)$. 
\end{lemma}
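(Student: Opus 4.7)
The plan is to take $\tilde\cA_{m,*}$ to be literally the same underlying map as $\tilde\cA_m \colon \cA(S_q,\Fsh) \to \cA(S_q',\FFsh)$ constructed in Proposition \ref{ACLF}. This makes sense because the underlying simplicial affine space $\cA(S,\Fsh)$ coincides with $\cA(S_q,\Fsh)$ on the nose: by construction of Section \ref{IFCCLF}, $S$ arises from $S_q$ purely by twisting the $\sigma$-action via the inner cocycle $\tc$, so $X_*(S) = X_*(S_q)$ as abstract $\bbZ$-modules, the relative root system $\Phi(G_\Fsh,S) = \Phi(G_{q,\Fsh},S_q)$ is unchanged, and the Bruhat--Tits affine structure (the vanishing hyperplanes of the affine roots) transfers verbatim. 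The same holds on the primed side. With these identifications, $\tilde\cA_{m,*}$ is automatically a simplicial isomorphism, and the actual content of the lemma is the $\sigma_*$-equivariance.

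Unwinding the twisted action, $\sigma_*$-equivariance amounts to the identity
\[
\tilde\cA_m\bigl(w_\sigma(\sigma\cdot x)\bigr) \;=\; w_{\sigma'}\bigl(\sigma' \cdot \tilde\cA_m(x)\bigr) \qquad \text{for all } x \in \cA(S_q,\Fsh).
\]
I would verify this in two steps. First, the lemma immediately preceding Proposition \ref{QSDP} already gives $\tilde\cA_m(\sigma\cdot x) = \sigma'\cdot \tilde\cA_m(x)$, which reduces the claim to showing that $\tilde\cA_m$ intertwines the action of $w_\sigma$ on $\cA(S_q,\Fsh)$ with that of $w_{\sigma'}$ on $\cA(S_q',\FFsh)$. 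Second, one checks that $\tilde\cA_m$ intertwines the entire $\tilde W^{ad}$-action with the $\tilde W^{ad'}$-action through the isomorphism $\tilde W^{ad} \cong \tilde W^{ad'}$; since by the construction \eqref{matchco} of $\fI_m$ the element $w_{\sigma'}$ is precisely the image of $w_\sigma$ under $\Omega_{C_q}^{ad} \cong \Omega_{C_q'}^{ad'} \subset \tilde W^{ad'}$, this will conclude.

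For the $\tilde W^{ad}$-intertwining, write $w_\sigma = (\lambda,w) \in X_*(T_q^{ad})_{I_F} \rtimes W$. The translation by $\lambda$ on $\cA(S_q,\Fsh)$ is matched with translation by $\lambda' \in X_*({T_q'}^{ad})_{I_{F'}}$ via the $\Del_m$-equivariant isomorphism of cocharacter lattices supplied by Lemma \ref{charQSCLF}, because translations on the apartment are by construction read off from the very lattice $X_*(T_q^{ad})_{I_F}$ used to define $\tilde\cA_m$ (the base point $x_0 \mapsto x_0'$ was pinned down in Proposition \ref{ACLF}). For the Weyl part $w \in W = W(G_{q,\Fsh},S_q)$, the isomorphism $W \cong W'$ was already recorded as part of $W^e \cong W^{e'}$ in Proposition \ref{ACLF}, and the compatibility of Chevalley--Steinberg systems fixed in Section \ref{CSCLF} ensures that the lifts $\tw,\tw'$ to $N_{G_q}(S_q)(\Fsh)$ and $N_{G_q'}(S_q')(\FFsh)$ act compatibly through $\tilde\cA_m$.

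The only genuinely delicate point is the interaction between the semidirect decomposition $\tilde W^{ad} = X_*(T_q^{ad})_{I_F} \rtimes W$ and the representative $m_\sigma = t\tw$ used to build $\tc$: one must check that the image of $m_\sigma$ in $\tilde W^{ad}$ really does act as $w_\sigma = (\lambda, w)$ on the apartment, compatibly with the analogous decomposition on the primed side. But this is exactly how $\fI_m$ of Lemma \ref{IF} and the refinement \eqref{matchco} were set up in Section \ref{IFCCLF}, so the verification is essentially bookkeeping rather than a substantive obstacle.
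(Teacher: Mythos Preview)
Your proposal is correct and is exactly the kind of unpacking the paper has in mind: the paper gives no proof at all beyond the sentence ``The following lemma is easy,'' and your argument supplies precisely the natural details --- identifying $\cA(S,\Fsh)$ with $\cA(S_q,\Fsh)$ as simplicial affine spaces (as the paper itself does in the proof of Lemma~\ref{Fsplit}), reducing $\sigma_*$-equivariance to the identity $\tilde\cA_m(w_\sigma(\sigma\cdot x)) = w_{\sigma'}(\sigma'\cdot\tilde\cA_m(x))$, and then invoking the already-established $\sigma$-equivariance of $\tilde\cA_m$ together with the isomorphism $W^e \cong W^{e'}$ from Proposition~\ref{ACLF} and the matching $w_\sigma \mapsto w_{\sigma'}$ under $\Omega_{C_q}^{ad} \cong \Omega_{C_q'}^{ad}$ from \eqref{matchco}. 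There is nothing to correct; you have simply written out what the author deemed obvious.
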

Now let $\tilde\cF_*$ be $\sigma_*$-invariant facet in $\cA(S, \Fsh)$ and let $\tilde\cF'_{*}= \tilde\cA_{m,*}(\tilde\cF_*)$. Let $\cF_* =\tilde \cF_*^{\sigma_*}$ and $\cF'_{*} = \tilde\cF'^{\sigma'_{*}}$.

\begin{proposition}\label{EDPCLF} Let $m \geq 1$, $F, F'$ non-archimedean local fields with $e_F,e_{F'} \geq 2m$. Let $l$ as in Theorem \ref{QSP},  let $D_l$ be the congruence data of level $l$, and let $(G_q', T_q', B_q', S_q')$  correspond to $(G_q,T_q,B_q,S_q)$ via $D_l$. Let $\tp_m: \cP_{\tilde\cF} \times_{\fO_\Fsh} \fO_\Fsh/\fp_\Fsh^m \rightarrow \cP_{\tilde\cF'}\times_{\fO_{\FFsh}} \fO_{\FFsh}/\fp_{\FFsh}^m$ denote the $\sigma$-equivariant isomorphism of Theorem \ref{QSP} and Proposition \ref{QSDP}. Let $c \rightarrow c'$ via $\fI_m$ (see \eqref{matchco}).  The isomorphism $\tp_m$ induces a $\sigma_*$-equivariant isomorphism $\tp_{m,*}: \cP_{\tilde\cF_*}\times_{\fO_\Fsh} \fO_\Fsh/\fp_\Fsh^m \rightarrow \cP_{\tilde\cF'_{*}} \times_{\fO_{\FFsh}} \fO_{\FFsh}/\fp_{\FFsh}^m$. 
\end{proposition}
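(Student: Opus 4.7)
The plan is to exploit the fact that over $\Fsh$, Steinberg's vanishing theorem trivializes the inner twist $G \to G_q$, so that $\cP_{\tilde\cF_*}$ and $\cP_{\tilde\cF}$ coincide as $\fO_\Fsh$-group schemes (and similarly for the primed versions, using $\tilde\cF := \tilde\cF_*$ viewed as a facet in $\cA(S_q,\Fsh) = \cA(S,\Fsh)$). The only difference between the $F$-form $G_q$ and the $F$-form $G$ is the descent datum: the natural Galois action $\sigma$ is replaced by the twisted action $\sigma_* = \Inn(\tc(\sigma)) \circ \sigma$ on $G_q(\Fsh) = G(\Fsh)$, and similarly over $F'$. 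At the level of schemes I therefore set $\tp_{m,*} := \tp_m$, and the whole task reduces to verifying that $\tp_{m,*}$ intertwines $\sigma_*$ and $\sigma'_*$ rather than merely $\sigma$ and $\sigma'$.

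Since $\tilde\cF_*$ is $\sigma_*$-invariant, the equality $w_\sigma(\sigma \cdot \tilde\cF_*) = \tilde\cF_*$ implies that $\tc(\sigma)$ conjugates the parahoric $P_{\sigma\cdot\tilde\cF_*}$ back to $P_{\tilde\cF_*}$. Hence conjugation by $\tc(\sigma) \in P_{C_q}^* \subset G_q^{ad}(\Fu)$ extends uniquely to an automorphism of $\cP_{\tilde\cF_*}$ as $\fO_\Fsh$-group scheme, and likewise for $\tc'(\sigma')$. Using the $\sigma$-equivariance from Proposition \ref{QSDP}, the desired $\sigma_*$-equivariance reduces to the identity
\[
\tp_m \circ \Inn(\tc(\sigma)) = \Inn(\tc'(\sigma')) \circ \tp_m
\]
of endomorphisms of $\cP_{\tilde\cF_*} \times_{\fO_\Fsh} \fO_\Fsh/\fp_\Fsh^m$, and this in turn follows once I show that under $\tp_m$ the element $\tc(\sigma) \bmod \fp_\Fsh^m$ matches $\tc'(\sigma') \bmod \fp_{\FFsh}^m$.

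To verify this matching I decompose $\tc(\sigma) = t\,\tw$ as in Section \ref{IFCCLF} and handle the two factors separately. For the torus factor $t = Nm(\tilde\lambda(\pi_K))$: Lemma \ref{charQSCLF} matches $\tilde\lambda \in X_*(T_q^{ad})$ with $\tilde\lambda' \in X_*({T_q^{ad}}')$ compatibly with $\Del_l$, the Deligne equivalence matches the uniformizer $\pi_K$ with $\pi_{K'}$ modulo $\fp_K^l$, and the norm maps are defined by the Galois actions, which are themselves matched by $\Del_l$; invoking Lemma \ref{torusCLF} for the neutral component of the N\'eron--Raynaud model of $T_q^{ad}$ then yields that $t \bmod \fp_\Fsh^m \leftrightarrow t' \bmod \fp_{\FFsh}^m$ under the scheme isomorphism underlying $\tp_m$. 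For the Weyl factor $\tw$: the Chevalley--Steinberg systems on $G$ and $G'$ were arranged compatibly in Section \ref{CSCLF}, so the chosen representatives $\tw$ and $\tw'$ agree modulo $\fp^m$ in the root and torus subschemes making up $\cP_{\tilde\cF_*}^{(m)}$.

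The main obstacle I anticipate is bookkeeping-theoretic rather than geometric: one must confirm that the lift $\tc(\sigma) = t\tw$ genuinely defines an $\fO_\Fsh/\fp_\Fsh^m$-scheme automorphism of $\cP_{\tilde\cF_*}^{(m)}$ (not merely an abstract-group automorphism of its $\fO_\Fsh/\fp_\Fsh^m$-points), and that this extension is compatible with the analogous one on the primed side under $\tp_m$. This is handled by observing that $\cP_{\tilde\cF_*}$ is constructed via Landvogt's Artin--Weil procedure from the root subgroup schemes $\cU_{a,\tilde\cF_*}$ and the torus scheme $\cT$; the conjugation action of $t$ preserves each $\cU_{a,\tilde\cF_*}$ by the filtration formulas, while conjugation by $\tw$ permutes these schemes according to the Weyl action. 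Each such action is functorial in the scheme-theoretic data already shown to be matched modulo $\fp^m$ by Lemma \ref{rootgroupschemes}, Corollary \ref{unipotentCLF}, and Lemma \ref{torusCLF}, which gives the required compatibility and completes the proof.
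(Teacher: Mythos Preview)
Your strategy coincides with the paper's: set $\tp_{m,*}=\tp_m$ and reduce $\sigma_*$-equivariance to checking that $\Inn(\tc(\sigma))$ and $\Inn(\tc'(\sigma'))$ are intertwined by $\tp_m$, then handle the factors $t$ and $\tw$ of $m_\sigma=t\tw$ separately on the torus and root-subgroup schemes. The paper carries this out by explicit computation rather than by your more structural phrasing: it writes $\tw\, x_b(u)\,\tw^{-1}=x_{w\cdot b}(d_{w,b}u)$ (and the analogous formula when $2b$ is a root) and checks directly from the compatibly chosen Chevalley--Steinberg systems that the signs satisfy $d_{w,b}=d_{w',b'}$; similarly it writes $t\,x_c(u)\,t^{-1}=x_c(\chi(t)u)$ and uses the matching of $\tilde\lambda,\pi_K$ with $\tilde\lambda',\pi_{K'}$.

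One phrasing in your argument should be tightened. You write that the identity $\tp_m\circ\Inn(\tc(\sigma))=\Inn(\tc'(\sigma'))\circ\tp_m$ ``follows once I show that under $\tp_m$ the element $\tc(\sigma)\bmod\fp_\Fsh^m$ matches $\tc'(\sigma')\bmod\fp_{\FFsh}^m$.'' But $\tc(\sigma)\in P_{C_q}^*\subset G_q^{ad}(\Fu)$ is not an element of $\cP_{\tilde\cF_*}(\fO_\Fsh)$, so it does not lie in the domain of $\tp_m$ and there is no literal ``image under $\tp_m$'' to speak of. What you actually need (and what your final paragraph correctly supplies) is that the induced \emph{automorphisms} of the constituent schemes $\cU_{a,\tilde\cF_*}^{(m)}$ and $\cT^{(m)}$ match; the paper's explicit formulas make precisely this point, and this is the step that does the work.
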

\begin{proof} 
 We begin by understanding the action of $\sigma_*$ on an element of $P_{\tilde\cF_*}$ more explicitly. Recall that
\[P_{\tilde\cF}=\left\langle \cU_{\tilde\cF}^+(\fO_{\Fsh}), \cT(\fO_{\Fsh}), \cU_{\tilde\cF}^-(\fO_{\Fsh}) \right\rangle\]
Let $g \in P_{\tilde\cF}$. Then $\sigma_* \cdot g_* = (m_\sigma (\sigma \cdot g) m_\sigma^{-1})_*$.
Let $b_0 \in \Phi^{red}(G_q,S_q)$ such that $2b_0$ is not a root. Let $y \in U_{b_0,\tilde\cF}$. Fix $\beta_0|_{S_q} = b_0$, fix the pinning $(L_{\beta_0}, x_{b_0})$ and write $y = x_{b_0}(u_0)$ for $u_0 \in L_{\beta_0}$ (As explained in Section \ref{splitting extension}, $L_{b_0} \cong L_{\beta_0}\hookrightarrow K$).
Let $\tilde\sigma$ denote a lift of $\sigma$ to $\Gamma_F$ and let $\beta = \tilde\sigma \cdot \beta_0$, $b=\sigma \cdot b_0$. 
Then we obtain a pinning $(L_\beta, x_b)$ from the pinning $(L_{\beta_0}, x_{b_0})$ via $\tilde\sigma$ and we have $\sigma \cdot x_{b_0}(u_0) = x_{b}(\tilde\sigma \cdot u_0)$; this follows using properties of Chevalley-Steinberg system recalled in Section \ref{CSsystem} (a), (b). Let $u= \tilde\sigma \cdot u_0$. Then $u \in L_\beta$. 
We need to compute $\tw x_b(u)\tw^{-1}$. We will first compute $\ts_a x_b(u) \ts_a^{-1}$ for $a\in \Delta$. Note that  
\begin{align}\label{tsa}
\ts_a=\prod_{\alpha \in \tilde\Delta_a} \ts_\alpha
\end{align} 
and that $L_{s_a \cdot b} = L_b$. Now for $\alpha_1,\beta_1 \in \Phi(G_q,T_q)$, we have $\ts_{\alpha_1} x_{\beta_1}(z)\ts_{\alpha_1}^{-1} = x_{s_{\alpha_1}(\beta_1)}(d_{\alpha_1, \beta_1}z)$ for all $z \in K$,  with $d_{\alpha_1, \beta_1 }= \pm 1$. Using the properties of Chevalley-Steinberg system recalled in Section \ref{CSsystem} (a), (b),  we have \begin{align}\label{galoisbeta}
d_{\alpha_1,\beta_1} = d_{\gamma(\alpha_1), \gamma(\beta_1)} \;\;\forall\;\; \gamma \in \Gal(K/\Fsh).
\end{align}
With $\beta$ as above, note that $\beta|_{S_q} = b$. Let \[d_{a,b}:=\prod_{\alpha \in \tilde\Delta_a}d_{\alpha,\beta}\] 
This notation is justified since \eqref{galoisbeta} implies that the definition of $d_{a,b}$ does not depend on the choice of $\beta$. Using the definition of $x_b$ in  \eqref{2anotroot}, a simple calculation yields that
$\ts_ax_b(u)\ts_a^{-1} = x_{s_a(b)}(d_{a,b}u).$ 
 Since we chose our Chevalley-Steinberg systems compatibly (see Section \ref{CSCLF}), we evidently have $d_{a,b} = d_{a',b'}$ for all $a \in \Delta, b \in \Phi(G_q,S_q)$. Iterating this process, we see that
$\tw x_b(u) \tw^{-1} = x_{w\cdot b}(d_{w,b}u)$ where $d_{w,b} = \pm 1$ and $d_{w,b}= d_{w',b'}$. 

Suppose $b_0 \in \Phi(G_q,S_q)$ such that $2b_0$ is a root.  Let $\beta_0, \bar\beta_0|_{S_q} = b_0$. Fix the pinning $(L_{\beta_0}, L_{\beta_0+\bar\beta_0}, x_{b_0})$ and write $y = x_{b_0}(u_0,v_0)$, with $u_0,v_0 \in L_{\beta_0}$ (Recall that $L_{b_0} \cong L_{\beta_0} \subset K$). Let $\beta = \tilde\sigma \cdot  \beta_0, \bar\beta = \tilde\sigma \cdot \bar\beta_0$ and $b = \sigma \cdot b_0$. We then obtain a pinning $(L_\beta, L_{\beta+\bar\beta}, x_b)$ via $\tilde\sigma$ and $\sigma \cdot x_{b_0}(u_0,v_0) = x_{\sigma \cdot b_0}(\tilde\sigma \cdot u_0, \tilde\sigma \cdot v_0)$ where $\tilde\sigma$ as before.
Let $u = \tilde\sigma \cdot u_0, v = \tilde\sigma \cdot v_0$. Then $u,v \in L_\beta$. 
We need to compute $\ts_a x_b(u,v) \ts_a^{-1}$ where $s_a$ is as in \eqref{tsa}.  
Let \[d_{a,b}:=\prod_{\alpha \in \tilde\Delta_a}d_{\alpha,\beta},\;\; , \;\;d_{a,2b}:=\prod_{\alpha \in \tilde\Delta_a}d_{\alpha,\beta+\bar\beta}\]
Again, the definitions of $d_{a,b}$ and $d_{a,2b}$ do not depend on the choice of $\beta$ by \eqref{galoisbeta}. 

Then a simple calculation yields
\begin{align*}
\ts_a x_b(u,v) \ts_a^{-1} = x_{s_a(b)}(d_{a,b}u, d_{a, 2b}v).
\end{align*}

Proceeding as in the previous case, we have 
$\tw x_b(u,v) \tw^{-1} = x_{w\cdot b}(d_{w,b}u,d_{w,2b}v))$ where $d_{w,b}, d_{w,2b} = \pm 1$ and $d_{w,b}= d_{w',b'}$ and $d_{w,2b} = d_{w',2b'}$.

Recall that $t = Nm(\tilde\lambda(\pi_K)) \in T_q^{ad}(\Fu)$. Then for each $\gamma\in \Gal(K/\Fu)$, $\gamma \cdot t = t$. Let $c \in \Phi^{red}(G_q,S_q)$ with $2c$ not a root. Let $\chi \in \Phi(G_q. T_q)$  with $\chi |_{S_q} =c$. Note that $\chi$ factors through $T_q^{ad}$. 
Fixing the pinning $(L_\chi, x_c)$ we have that $\chi:T \rightarrow G_m$ is defined over $L_\chi$ and $\chi(t) \in L_\chi^\times$. A simple calculation yields  $tx_c(u)t^{-1} = x_c(\chi(t)u)$ for each $u \in L_\chi$. If $c, 2c$ are roots, then with $\chi, \bar\chi$ such that $\chi |_{S_q}  =\bar\chi|_{S_q} =c$ and fixing the pinning $(L_\chi, L_{\chi+\bar\chi}, x_c)$, it follows that
$tx_c(u,v)t^{-1} = x_c(\chi(t)u, (\chi+\bar\chi)(t)v)$.
Hence, if $2b_0$ is not a root then \[\sigma_* \cdot (x_{b_0}(u_0))_* = (x_{w\cdot b}(d_{w,b}\chi(t)u))_*\] where $\chi|_{S_q} = w \cdot b.$ If $2b_0$ is a root, then 
\[ \sigma_* \cdot (x_{b_0}(u_0,v_0))_*= (x_{w\cdot b}(d_{w,b} \chi(t) u, d_{w,2b} (\chi+\bar\chi)(t)v))_*\]
where $\chi,\chi' \in \Phi(G_q,T_q)$ are such that $\chi, \bar\chi|_{S_q} = w\cdot b$. It is easy to calculate $\sigma_* \cdot (x_{2b_0}(0, v_0))_*$ using the observations above.  For $x \in \cT_q(\fO_{\Fsh}),$ 
\[\sigma_*\cdot x_* = (w (\sigma \cdot x)w^{-1})_*.\]
Combining these observations with the fact that $\tp_m$ is $\sigma$-equivariant (see Lemma \ref{QSDP}), it follows that the map $\tp_{m,*}$ has the property that
$\tp_{m,*} \circ \sigma_* = \sigma'_{*} \circ \tp_{m,*}$ (in this verification, we choose $\tilde\sigma'$ to correspond to $\tilde\sigma$ via $\Del_m$).
\end{proof}

\begin{corollary} The isomorphism $\tp_{m,*}$ induces an isomorphism of group schemes
\[ p_{m,*}: \cP_{\cF_*}\times_{\fO_F} \fO_F/\fp_F^m \rightarrow \cP_{\cF'_{*}} \times_{\fO_{F'}} \fO_{F'}/\fp_{F'}^m \times_{\psi_m^{-1}} \fO_F/\fp_F^m.\]
In particular
$\cP_{\cF_*}(\fO_F/\fp_F^m)$ and $\cP_{\cF'_{*}}(\fO_{F'}/\fp_{F'}^m)$ are isomorphic as groups.
\end{corollary}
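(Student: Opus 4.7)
The plan is to descend the $\sigma_*$-equivariant isomorphism $\tp_{m,*}$ furnished by Proposition \ref{EDPCLF} along the Galois (\'etale) descent used in Section \ref{etaledescent} to construct $\cP_{\cF_*}$ from $\cP_{\tilde\cF_*}$. By the defining property of this descent, $\cP_{\cF_*}\times_{\fO_F}\fO_\Fsh \cong \cP_{\tilde\cF_*}$, and since $\Fsh/F$ is unramified we have $\fp_F^m\fO_\Fsh = \fp_\Fsh^m$, so reducing modulo $\fp_F^m$ yields
\[\cP_{\cF_*}^{(m)}\otimes_{\fO_F/\fp_F^m}\fO_\Fsh/\fp_\Fsh^m \;\cong\; \cP_{\tilde\cF_*}^{(m)},\]
and analogously for the primed objects. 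First I would extend $\psi_m$ to an isomorphism $\fO_\Fsh/\fp_\Fsh^m \cong \fO_\FFsh/\fp_\FFsh^m$ intertwining $\sigma$ and $\sigma'$; this extension exists and is essentially unique because unramified extensions of truncated local rings are classified by extensions of residue fields, via the compatible algebraic closures $\bar\kappa_F$ and $\bar\kappa_{F'}$ fixed in Section \ref{etaledescent}.

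Under this identification $\tp_{m,*}$ becomes an isomorphism of $\fO_\Fsh/\fp_\Fsh^m$-group schemes between $\cP_{\tilde\cF_*}^{(m)}$ and $\cP_{\tilde\cF'_*}^{(m)}\times_{\psi_m^{-1}}\fO_\Fsh/\fp_\Fsh^m$ that commutes with the (common) action of $\sigma_*$, by Proposition \ref{EDPCLF}. Because the cover $\fO_F/\fp_F^m \to \fO_\Fsh/\fp_\Fsh^m$ is faithfully flat with Galois symmetry matching that of $\fO_F \to \fO_\Fsh$, Galois descent (Example B of Section 6.2 of \cite{BLR}, applied at the truncated level) produces a unique $\fO_F/\fp_F^m$-morphism
\[p_{m,*}:\cP_{\cF_*}^{(m)} \longrightarrow \cP_{\cF'_*}^{(m)}\times_{\psi_m^{-1}}\fO_F/\fp_F^m\]
whose base change to $\fO_\Fsh/\fp_\Fsh^m$ recovers $\tp_{m,*}$; faithful flatness preserves the property of being an isomorphism, so $p_{m,*}$ is an isomorphism of $\fO_F/\fp_F^m$-group schemes.

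The ``in particular'' assertion then follows by taking $\fO_F/\fp_F^m$-points on both sides and using the ring isomorphism $\psi_m:\fO_F/\fp_F^m \xrightarrow{\cong} \fO_{F'}/\fp_{F'}^m$ to identify $(\cP_{\cF'_*}^{(m)}\times_{\psi_m^{-1}}\fO_F/\fp_F^m)(\fO_F/\fp_F^m)$ with $\cP_{\cF'_*}(\fO_{F'}/\fp_{F'}^m)$. The only point that requires care is that Galois descent remains valid after reducing mod $\fp_F^m$; this is not a serious obstacle because $\cP_{\cF_*}$ is smooth of finite type over $\fO_F$, so its formation commutes with any base change on $\Spec(\fO_F)$, and the descent datum for $\cP_{\tilde\cF_*}$ at the $\fO_F$-level restricts to a compatible descent datum at the $\fO_F/\fp_F^m$-level.
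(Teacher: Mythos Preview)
Your proposal is correct and follows essentially the same approach as the paper: the paper's proof simply invokes Proposition \ref{EDPCLF} together with \'etale descent (Example B, Section 6.2 of \cite{BLR}), and you have spelled out exactly these two ingredients with the additional care of explaining why the descent works at the truncated level. Your version is more detailed than the paper's one-line proof, but the underlying argument is identical.
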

\begin{proof} This follows from Proposition \ref{EDPCLF} and \'{e}tale descent (Example B, Section 6.2, \cite{BLR}).
\end{proof}


\begin{thebibliography}{ABPS14}
\bibitem[ABPS16]{ABPS14}
A.-M. {Aubert}, P.~{Baum}, R.~{Plymen}, and M.~{Solleveld}, \emph{{The local
  Langlands correspondence for inner forms of $SL_n$}},
  Res. Math. Sci.  (2016) 3:32.

\bibitem[Bad02]{Bad02}
Alexandru~Ioan Badulescu, \emph{Correspondance de {J}acquet-{L}anglands pour
  les corps locaux de caract\'eristique non nulle}, Ann. Sci. \'Ecole Norm.
  Sup. (4) \textbf{35} (2002), no.~5, 695--747.

\bibitem[BLR90]{BLR}
Siegfried Bosch, Werner L{\"u}tkebohmert, and Michel Raynaud, \emph{N\'eron
  models}, Ergebnisse der Mathematik und ihrer Grenzgebiete (3) [Results in
  Mathematics and Related Areas (3)], vol.~21, Springer-Verlag, Berlin, 1990.

\bibitem[BT72]{BT1}
F.~Bruhat and J.~Tits, \emph{Groupes r\'eductifs sur un corps local}, Inst.
  Hautes \'Etudes Sci. Publ. Math. (1972), no.~41, 5--251. 

\bibitem[BT84]{BT2}
\bysame, \emph{Groupes r\'eductifs sur un corps local. {II}. {S}ch\'emas en
  groupes. {E}xistence d'une donn\'ee radicielle valu\'ee}, Inst. Hautes
  \'Etudes Sci. Publ. Math. (1984), no.~60, 197--376. 

\bibitem[Con]{Conrad}
Brian Conrad, \emph{{R}eductive group schemes ({SGA}3 summer school)}, notes.

\bibitem[CY01]{CY01}
Ching-Li Chai and Jiu-Kang Yu, \emph{Congruences of {N}\'eron models for tori
  and the {A}rtin conductor}, Ann. of Math. (2) \textbf{154} (2001), no.~2,
  347--382, With an appendix by Ehud de Shalit.

\bibitem[Del84]{Del84}
P.~Deligne, \emph{Les corps locaux de caract\'eristique {$p$}, limites de corps
  locaux de caract\'eristique {$0$}}, Representations of reductive groups over
  a local field, Travaux en Cours, Hermann, Paris, 1984, pp.~119--157.

\bibitem[DR09]{DR}
Stephen DeBacker and Mark Reeder, \emph{Depth-zero supercuspidal {$L$}-packets
  and their stability}, Ann. of Math. (2) \textbf{169} (2009), no.~3, 795--901.
  

\bibitem[Gan15]{Gan15}
Radhika Ganapathy, \emph{The local {L}anglands correspondence for {$\rm GSp_4$}
  over local function fields}, Amer. J. Math. \textbf{137} (2015), no.~6,
  1441--1534. 

\bibitem[GV17]{GV17}
Radhika Ganapathy and Sandeep Varma, \emph{On the local {L}anglands
  correspondence for split classical groups over local function fields}, J.
  Inst. Math. Jussieu \textbf{16} (2017), no.~5, 987--1074.
  
  \bibitem[HR08]{HR08}
T.~Haines and M.~Rapoport, \emph{Appendix:  On parahoric subgroups}, Advances in Math. 219 (1),
(2008), 188-198; appendix to: G. Pappas, M. Rapoport, \emph{Twisted loop groups and their
affine flag varieties}, Advances in Math. 219 (1), (2008), 118-198.


\bibitem[How85]{How85}
Roger Howe, \emph{Harish-{C}handra homomorphisms for {${\germ p}$}-adic
  groups}, CBMS Regional Conference Series in Mathematics, vol.~59, Published
  for the Conference Board of the Mathematical Sciences, Washington, DC, 1985,
  With the collaboration of Allen Moy.

\bibitem[IM65]{IwaMat65}
N.~Iwahori and H.~Matsumoto, \emph{On some {B}ruhat decomposition and the
  structure of the {H}ecke rings of {${\germ p}$}-adic {C}hevalley groups},
  Inst. Hautes \'Etudes Sci. Publ. Math. (1965), no.~25, 5--48.

\bibitem[Kaz86]{kaz86}
D.~Kazhdan, \emph{Representations of groups over close local fields}, J.
  Analyse Math. \textbf{47} (1986), 175--179.

\bibitem[{Kot}14]{Kot16}
R.~{Kottwitz}, \emph{{B(G) for all local and global fields}}, https://arxiv.org/abs/1401.5728
  (2014).

\bibitem[Lan96]{La00}
Erasmus Landvogt, \emph{A compactification of the {B}ruhat-{T}its building}, Lecture Notes in Mathematics, Volume 1619, Springer-Verlag, Berlin, 1996. 

\bibitem[Lem01]{Lem01}
Bertrand Lemaire, \emph{Repr\'esentations g\'en\'eriques de {${\rm GL}\sb N$}
  et corps locaux proches}, J. Algebra \textbf{236} (2001), no.~2, 549--574.

%\bibitem[{Pra}16]{Pra17}
%G.~{Prasad}, \emph{{A new approach to unramified descent in Bruhat-Tits
%  theory}}, to appear in American Journal of Mathematics, (https://arxiv.org/abs/1611.07430).

\bibitem[Ser79]{Ser79}
Jean-Pierre Serre, \emph{Local fields}, Graduate Texts in Mathematics, vol.~67,
  Springer-Verlag, New York-Berlin, 1979, Translated from the French by Marvin
  Jay Greenberg. 

\bibitem[Ste65]{Ste65}
Robert Steinberg, \emph{Regular elements of semisimple algebraic groups}, Inst.
  Hautes \'Etudes Sci. Publ. Math. (1965), no.~25, 49--80. 

\bibitem[Tit79]{Tit77}
J.~Tits, \emph{Reductive groups over local fields}, Automorphic forms,
  representations and {$L$}-functions ({P}roc. {S}ympos. {P}ure {M}ath.,
  {O}regon {S}tate {U}niv., {C}orvallis, {O}re., 1977), {P}art 1, Proc. Sympos.
  Pure Math., XXXIII, Amer. Math. Soc., Providence, R.I., 1979, pp.~29--69.

\bibitem[Yu02]{Yu02}
Jiu-Kang Yu, \emph{Smooth models associated to concave functions in Bruhat-Tits theory} Autour des sch\'emas en groupes. Vol. III, 227–258, Panor. Synth\`eses, 47, Soc. Math. France, Paris, 2015.

\end{thebibliography}
\end{document}